\newcommand{\be}{\begin{equation}}
\newcommand{\ee}{\end{equation}}
\newcommand{\ba}{\begin{eqnarray}}
\newcommand{\ea}{\end{eqnarray}}
\newcommand{\bi}{\begin{itemize}}
\newcommand{\ei}{\end{itemize}}
\newcommand{\bn}{\begin{enumerate}}
\newcommand{\en}{\end{enumerate}}
\newcommand{\bbm}{\begin{bmatrix}}
\newcommand{\ebm}{\end{bmatrix}}
\newcommand{\bp}{\begin{proof}}
\newcommand{\ep}{\end{proof}}
\newcommand{\nn}{\nonumber}
\newcommand{\mr}{\ensuremath{\mathrm}}
\newcommand{\mbf}{\ensuremath{\mathbf}}
\newcommand{\mc}{\ensuremath{\mathcal}}
\newcommand{\ov}{\ensuremath{\overline}}
\newcommand{\wt}{\ensuremath{\widetilde}}
\newcommand{\Om}{\ensuremath{\Omega}}
\newcommand{\la}{\ensuremath{\lambda }}
\def\C{\mathbb{C}}
\def\D{\mathbb{D}}
\def\T{\mathbb{T}}
\def\N{\mathbb{N}}
\def\B{\mathbb{B}}
\newcommand{\n}{\ensuremath{\mathbf{n} }}
\renewcommand{\H}{\ensuremath{\mathcal{H} }}
\newcommand{\J}{\ensuremath{\mathcal{J} }}
\newcommand{\K}{\ensuremath{\mathcal{K} }}
\renewcommand{\L}{\ensuremath{\mathscr{L} }}
\newcommand{\F}{\ensuremath{\mathbb{F} }}
\newcommand{\ip}[2]{\ensuremath{\langle {#1} , {#2} \rangle}}
\newcommand{\dom}[1]{\ensuremath{\mathrm{Dom} ({#1}) }}
\renewcommand{\dim}[1]{\ensuremath{\mathrm{dim} \left( {#1} \right) }}
\newcommand{\ran}[1]{\ensuremath{\mathrm{Ran} \left( {#1} \right) }}
\renewcommand{\ker}[1]{\ensuremath{\mathrm{Ker} ({#1}) }}
\newcommand{\im}[1]{\ensuremath{\mathrm{Im} \left( {#1} \right) }}
\newcommand{\re}[1]{\ensuremath{\mathrm{Re} \left( {#1} \right) }}
\newcommand{\Ext}[1]{\ensuremath{\mathrm{Ext} ({#1}) }}
\numberwithin{equation}{section}
\numberwithin{subsection}{section}
\newtheorem{thm}[subsection]{Theorem}
\newtheorem{lemma}[subsection]{Lemma}
\newtheorem{prop}[subsection]{Proposition}
\newtheorem{cor}[subsection]{Corollary}
\theoremstyle{definition}
\newtheorem{defn}[subsection]{Definition}
\newtheorem{remark}[subsection]{Remark}
\newtheorem{eg}[subsection]{Example}
\def\expandafter\normalsize\expandafter{%
    \normalsize
    \setlength\abovedisplayskip{10pt}
    \setlength\belowdisplayskip{10pt}
    \setlength\abovedisplayshortskip{7pt}
    \setlength\belowdisplayshortskip{7pt}
}
\title[AC theory for Drury-Arveson space]{Aleksandrov-Clark theory for Drury-Arveson space}
\author{M.T. Jury}
\address{University of Florida}
\email{mjury@ad.ufl.edu}
\author{R.T.W. Martin}
\address{University of Cape Town}
\email{rtwmartin@gmail.com}
\thanks{Second author acknowledges support of NRF CPRR Grant 90551.}
\begin{document}
\bibliographystyle{unsrt}
\begin{abstract}

    Recent work has demonstrated that Clark's theory of unitary perturbations of the backward shift restricted to a deBranges-Rovnyak subspace of Hardy space on the disk has a natural extension to the several variable setting. In the several variable case, the appropriate generalization of the Schur class of contractive analytic functions is the closed unit ball of the Drury-Arveson multiplier algebra and the Aleksandrov-Clark measures are necessarily promoted to positive linear functionals on a symmetrized subsystem of the Cuntz-Toeplitz operator system $\mc{A} + \mc{A} ^*$, where $\mc{A}$ is the non-commutative disk algebra.

We continue this program for vector-valued Drury-Arveson space by establishing the existence of a canonical `tight' extension of any Aleksandrov-Clark map to the full Cuntz-Toeplitz operator system.  We apply this tight extension to generalize several earlier results and we characterize all extensions of the Aleksandrov-Clark maps.

\end{abstract}

\maketitle
\onehalfspace

\vspace{5mm}   \noindent {\it Key words and phrases}:
Hardy space, Drury-Arveson space, model subspaces, deBranges-Rovnyak spaces, multiplier algebra, non-commutative disk algebra, Aleksandrov-Clark measures

\vspace{3mm}
\noindent {\it 2010 Mathematics Subject Classification} ---46L07 Operator spaces and completely bounded maps; 47B32 Operators on reproducing kernel Hilbert spaces; 46E22 Hilbert spaces with reproducing kernels; 47L80 Algebras of specific types of operators; 46J15 Banach algebras of differentiable or analytic functions

\section{Introduction}

    The Drury-Arveson space, $H^2 _d$, of analytic functions on the open unit ball $\B ^d := (\C ^d ) _1$ of $d-$dimensional complex space is the reproducing kernel Hilbert space (RKHS) $\H (k)$ of functions on $\B ^d$
corresponding to the positive kernel function $k : \B ^d \times \B ^d \rightarrow \C$:
$$ k (z,w) := \frac{1}{1-zw^*}; \quad \quad z,w \in \B ^d. $$ Here $zw^* := (w ,z) _{\C ^d} = z_1 \ov{w} _1 + ... + z_d \ov{w} _d, $ all inner products are assumed conjugate linear in the first argument.  In the case where $d=1$ we recover the classical Hardy space $H^2 = H^2 (\D )$ of analytic functions on the disk $\D$ which have non-tangential boundary values almost everywhere on the unit circle $\T$ with respect to normalized Lebesgue measure $m$.

Any reproducing kernel Hilbert space $\H (k)$ on a set $X$ is naturally equipped with a \emph{multiplier algebra}, $\mr{Mult} (\H (k) )$, the algebra of all functions on $X$ which multiply elements of $\H (k)$ into $\H (k)$:
$$ \mr{Mult} (\H (k) ) := \{ F : X \rightarrow \C | \ F h \in \H (k) \ \forall h \in \H (k) \}. $$ Identifying $\mr{Mult} (\H (k) ) $ as linear transformations on $\H (k)$, a closed graph theorem argument shows that $\mr{Mult} (\H (k)) \subset \L ( \H (k) )$ consists of bounded linear maps.  It is also straightforward to check that the multiplier algebra is closed in the weak operator topology (WOT).

The Schur-class for Drury-Arveson space is the closed unit ball $[H^\infty _d ] _1$ of the multiplier algebra. Again in the case where $d=1$, we recover the usual Banach algebra $H^\infty = H^\infty (\D )$ of bounded analytic functions on $\D$ and the Schur class of contractive analytic functions on the disk.

Given any Schur class $b \in [H^\infty _d ] _1$,
$$ k^b (z,w) := \frac{1 - b(z) b(w)^*}{1-zw^*}, $$ defines a positive kernel function on $\B ^d$ and one defines the deBranges-Rovnyak space $K(b) := \H (k^b )$ to be the corresponding reproducing kernel Hilbert space (RKHS) of analytic functions on $\B ^d$.  One can check that $k-k^b$ is again a positive kernel function and standard RKHS theory then implies that $K(b)$ is contained contractively in $H^2 _d$ \cite[Corollary 5.3]{Paulsen-rkhs}.
Recall that multiplication by the independent variable $z$ defines an isometry $S$ on $H^2 = H^2 (\D )$ called the \emph{shift}. The shift plays a central role in the theory of Hardy spaces \cite{Hoff}.  The adjoint $S^*$ of the shift is called the backward shift,
$$ (S^* f) (z) = \frac{f(z) - f(0) }{z}; \quad \quad f \in H^2, \ z \in \D. $$ Every deBranges-Rovnyak subspace of $H^2$ is invariant under $S^*$, and the restrictions of the backward shift to deBranges-Rovnyak spaces can be used to construct a functional model (a special case of the deBranges-Rovnyak functional model) for arbitrary completely non-coisometric (c.n.c) contractions \cite{dBss,dBmodel,Nik1986,Ball1987,Ball2011dBR,Sarason-dB}. (The full deBranges-Rovnyak model is constructed using a two-component RKHS with $K(b)$ as its $(1,1)$ entry).)  This is one important reason for interest in these spaces from the point of view of operator theory.  The theory we develop in this paper can be applied to extend this deBranges-Rovnyak model to a class of generally non-commuting row contractions which generalize c.n.c. contractions with equal defect indices \cite{Ball2011dBR,Martin-dBmodel}. Standard references for the theory of deBranges-Rovnyak spaces on the disk are \cite{Sarason-dB,dBss}, and most of the basic theory we will generalize can be found in \cite{Sarason-dB}.

We are motivated by the theory developed by D.N. Clark in \cite{Clark1972} concerning unitary perturbations of the restriction of the backward shift to a deBranges-Rovnyak space $K (b)$.  Clark considered the case of inner $b$ in which case $K(b)$ is a co-invariant model subspace of $H^2$. We will closely follow the extension of this theory to the Schur class of contractive analytic functions on the disk as presented in \cite{Sarason-dB}, and as extended to Drury-Arveson space in \cite{Jur2014AC}.

\subsection{Clark Theory in the classical ($d=1$) case}

There is a natural bijection between the Schur class $[H^\infty ] _1$ of (purely) contractive analytic functions on the unit disk and the Herglotz class of all analytic functions on the disk with non-negative real part given by
$$ b \mapsto H_b := \frac{1+b}{1-b} ; \quad \quad \mbox{and} \quad \quad H \mapsto b_H := \frac{H-1}{H+1}. $$ (Any Schur class $b$ is either purely contractive, \emph{i.e.} $|b (z) | <1$, $\forall \ z \in \D$ or $b$ is a unimodular constant.)
There is also a natural bijection between Herglotz functions modulo imaginary constants and non-negative Borel measures on the unit circle given
by the Herglotz representation formula: given any Herglotz function $H$ on the disk, there is a unique non-negative Borel measure $\mu $ on the unit circle so that
 \be H(z) = i \im{H (0)} + \int _\T \frac{1+z\zeta^*}{1 -z\zeta ^*} \mu (d\zeta ). \label{classHerg} \ee In the above $\zeta ^* := \ov{\zeta}$ denotes complex conjugate. Conversely, given any non-negative Borel measure on $\T$ (and any imaginary constant), this formula defines a Herglotz function on the disk.
It follows that one can associate a unique non-negative Borel measure $\mu _b$ on the unit circle to any Schur class $b$. We will refer to this measure as the \emph{Herglotz measure} of $b$. More generally there is a $\mc{U} (1)$-parameter family (the one-dimensional unitary group, identified with the unit circle $\T$) of measures naturally associated with $b$, the \emph{Aleksandrov-Clark measures}. Namely, given any contractive analytic function $b$ and any $\alpha \in \T$, the Aleksandrov-Clark (AC) measure $\mu _\alpha$ is defined to be $\mu _{b \alpha ^*}$, the Herglotz measure of the contractive analytic function $b \alpha ^*$,
$$ \frac{1 +b(z) \alpha ^*}{1-b(z) \alpha ^*} = i \im{\frac{1 + b(0) \alpha ^*}{1-b(0) \alpha ^*}} + \int _\T \frac{1+z \zeta^*}{1-z\zeta ^*} \mu _\alpha (d\zeta). $$

 For any non-negative Borel measure $\mu$ on the circle $\T$, let $L^2 (\mu )$ denote the Hilbert space of $\mu -$square integrable functions on $\T$.  Since $\mu = \mu _b$ for a unique Schur class $b \in [H^\infty ] _1$, we will often use the notation $L^2 (b)$ for $L^2 (\mu)$. Let $P^2 (b)$ denote the closure of the analytic polynomials in $L^2 (b)$, \emph{i.e.}
$$ P^2 (b) := \bigvee _{n \geq 0} \zeta ^n, $$ and let $P^2 _0 (b)$ be the closed linear span of the non-constant monomials in $L^2 (b)$,
$$ P^2 _0 (b) := \bigvee _{n \geq 1} \zeta ^n \subset P^2 (b). $$

The Schur class is a convex set and $b \in [H^\infty ] _1$ is an extreme point if and only if $1- |b|$ fails to be log-integrable:
$$ \int _\T \ln ( 1 - |b(\zeta )| ) = - \infty \quad \Leftrightarrow \ b \ \mbox{is extreme}, $$ \cite[Chapter 9]{Hoff}. Since the Radon-Nikodym derivative of any AC measure $\mu _\alpha$ for $b$ with respect to normalized Lebesgue measure $m$ is \cite[Proposition 9.1.14]{Ross2006CT}:
$$ \frac{d\mu _\alpha}{dm} (\zeta ) = \frac{ 1 - |b (\zeta ) | ^2 }{|1 - b(\zeta) \alpha ^* | ^2}, $$ it follows that $b$ is an extreme point if and only if
$$ \int _\T \ln \left( \frac{d\mu _\alpha}{dm}  \right) dm = - \infty, $$ so that Szeg\"{o}'s theorem implies that $b$ is an extreme point if and only if $P^2 (b) = P^2 _0 (b)$, \emph{i.e.} if and only if the closed linear span of the non-constant analytic monomials contains all of the analytic polynomials in $L^2 (b)$ \cite[Chapter 4]{Hoff}.  It is not further not hard to show that $P^2 (b) = P^2 _0 (b)$ if and only if $P^2 (b) = L^2 (b)$.

In the seminal paper \cite{Clark1972}, D.N. Clark established the following results for the case of inner $b$ (the general versions below can be found in \cite[Chapter III]{Sarason-dB}).

\begin{thm} \label{classCT}
    For any $\alpha \in \T$ and any $b \in [H^\infty ] _1$ the \emph{weighted Cauchy} or \emph{Fantappi\`{e} transform} $\mc{F} _\alpha $ defined by
    $$ (\mc{F} _\alpha f ) (z) := (1 - b(z) \alpha ^* ) \int _\T \frac{f(\zeta )}{1-z \zeta ^* } \mu _\alpha ( d\zeta ), $$ is a unitary transformation from $P^2 (\mu _\alpha ) = P^2 (b \alpha ^* )$ onto
the deBranges-Rovnyak space $K (b)$.
\end{thm}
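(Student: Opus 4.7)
The plan is to verify the theorem on a convenient dense subspace, namely the family of Cauchy kernels $k^{AC}_w(\zeta) := \frac{1}{1-\ov{w}\zeta}$ for $w \in \D$, viewed as elements of $L^2(\mu_\alpha)$. For each $w \in \D$ the function $\zeta \mapsto k^{AC}_w(\zeta)$ is bounded, so it lies in $L^2(\mu_\alpha)$, and the uniformly convergent Taylor expansion $k^{AC}_w = \sum_{n \geq 0} \ov{w}^n \zeta^n$ shows that each $k^{AC}_w \in P^2(\mu_\alpha)$; differentiating at $w = 0$ recovers every analytic monomial, so the closed linear span of $\{k^{AC}_w\}_{w \in \D}$ equals $P^2(\mu_\alpha)$. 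Dually, the reproducing kernels $k^b_w$ have dense linear span in $K(b)$. Hence it suffices to (i) check that $\mc{F}_\alpha$ carries each $k^{AC}_w$ to a scalar multiple of $k^b_w$, and (ii) verify that the inner products agree on the spanning set.

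The computational core is the identity
\be
\int_\T \frac{d\mu_\alpha(\zeta)}{(1-z\zeta^*)(1-\ov{w}\zeta)}
= \frac{1}{1-z\ov{w}} \cdot \frac{1 - b(z)\ov{b(w)}}{(1-b(z)\alpha^*)(1-\ov{b(w)}\alpha)}.
\label{keyint}
\ee
I would derive \eqref{keyint} by the standard partial-fractions trick: on $|\zeta| = 1$ one has $\zeta^* = 1/\zeta$, so
\[
\frac{1}{(1-z\zeta^*)(1-\ov{w}\zeta)}
= \frac{1}{1-z\ov{w}}\left[\frac{1}{1-z\zeta^*} + \frac{1}{1-\ov{w}\zeta} - 1\right],
\]
and then apply the Herglotz representation \eqref{classHerg} for $H_{b\alpha^*}$, together with its complex conjugate, to evaluate the two remaining Cauchy integrals in closed form. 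Collecting terms and using $|\alpha| = 1$ yields \eqref{keyint}.

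Substituting \eqref{keyint} into the definition of $\mc{F}_\alpha$ immediately gives
\[
(\mc{F}_\alpha k^{AC}_w)(z) = \frac{1}{1-\ov{b(w)}\alpha}\,k^b(z,w),
\]
so $\mc{F}_\alpha$ sends each $k^{AC}_w$ into $K(b)$, and the image spans $K(b)$ densely. Using \eqref{keyint} once more with $z$ replaced by $w$ (and $w$ replaced by $v$) computes $\ip{k^{AC}_w}{k^{AC}_v}_{L^2(\mu_\alpha)}$, and a direct comparison with $\ip{\mc{F}_\alpha k^{AC}_w}{\mc{F}_\alpha k^{AC}_v}_{K(b)} = \frac{k^b(w,v)}{(1-b(w)\alpha^*)(1-\ov{b(v)}\alpha)}$ (remembering conjugate linearity in the first slot) shows the two inner products coincide. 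Hence $\mc{F}_\alpha$ extends by continuity to an isometry of $P^2(\mu_\alpha)$ whose range contains a dense subset of $K(b)$ and is closed, therefore equals $K(b)$.

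The main obstacle is really the identity \eqref{keyint}; everything else is bookkeeping. One subtlety to watch is that the Herglotz formula only determines $H_{b\alpha^*}$ up to an imaginary additive constant $ic$, but this constant cancels when $\int \frac{d\mu_\alpha}{1-z\zeta^*}$ and its complex conjugate $\int \frac{d\mu_\alpha}{1-\ov{w}\zeta}$ are added, so the final formula does not depend on the (immaterial) normalization of $\mr{Im}\,H_{b\alpha^*}(0)$.
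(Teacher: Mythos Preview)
Your proof is correct. The paper does not actually prove Theorem~\ref{classCT}; it is stated in the introduction as a classical result, with references to Clark and to \cite[Chapter~III]{Sarason-dB}. What the paper does prove is the multivariable generalization (the ``Noncommutative Fantappi\`{e} transform'' theorem in Section~\ref{Herglotz}), and the underlying strategy there is the same as yours: evaluate the $\mu$-inner product of two Cauchy kernels and recognize the answer as a reproducing kernel. Your identity \eqref{keyint} is precisely the $d=1$ specialization of the paper's Proposition~\ref{CPmap}, namely $\mu_b\bigl((I-zL^*)^{-1}(I-Lw^*)^{-1}\bigr)=K^b(z,w)$.

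The one structural difference worth noting is that the paper factors the transform through the intermediate \emph{Herglotz space} $\L(b)$: it first shows the \emph{unweighted} Cauchy transform $\mc{C}_b$ is unitary from $P^2(b)$ onto $\L(b)$ (whose kernel is $K^b(z,w)=\frac{1}{2}\frac{H_b(z)+\ov{H_b(w)}}{1-z\ov{w}}$), and then composes with the isometric multiplier $U_b^{-1}=(1-b(\cdot))$ of Lemma~\ref{ontoisomult} to land in $K(b)$. You instead absorb the weight $(1-b(z)\alpha^*)$ from the start and go directly to $K(b)$. Your route is slightly more economical for the classical statement; the paper's two-step factorization pays off later because the partial isometry $V^b$ and the tight-extension machinery live naturally on $\L(b)$ rather than on $K(b)$.
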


Given any Schur class $b$ and $\alpha \in \T$, let $Z ^\alpha := Z ^{b\alpha ^*}$ denote the unitary operator of multiplication by the independent variable in $L^2 (\mu _\alpha ) = L^2 (b \alpha ^* )$.
Clearly $P^2 (b \alpha ^* )$ is an invariant subspace for $Z ^\alpha $. Let $Y ^\alpha := Z^\alpha | _{P^2 (\mu _\alpha )}$, an isometry which equals $Z^\alpha $ if and only if $b$ is an extreme point.
For simplicity assume $b(0) = 0$ and let $X := S^* | _{K(b)}$.

\begin{thm} \label{uniperturb}
     Given $b \in [H^\infty ] _1$ (assume $b(0) = 0$), the weighted Cauchy transform $\mc{F} _\alpha$ intertwines the co-isometry $(Y ^\alpha ) ^*$ with a rank-one perturbation of $X$:
$$ X ^\alpha := \mc{F} _\alpha (Y ^\alpha ) ^* \mc{F} _\alpha ^* = X + \ip{\cdot}{1} S^* b \alpha ^*. $$
The point evaluation vector at $0$, $k_0 ^b \equiv 1 \in K (b)$ is cyclic for each $X ^\alpha$.

If $b$ is an extreme point of the Schur class then $Y ^\alpha = Z^\alpha $ is unitary so that each $X ^\alpha$ is a rank-one unitary perturbation of the restricted backward shift $X$. In this case if
$P _\alpha$ denotes the projection-valued measure of $X ^\alpha$ then $\mu _\alpha ( \Om ) = \ip{P _\alpha (\Om ) 1}{ 1 }.$
\end{thm}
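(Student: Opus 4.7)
Since Theorem \ref{classCT} gives $\mc{F}_\alpha$ unitary, the defining identity $X^\alpha := \mc{F}_\alpha (Y^\alpha)^* \mc{F}_\alpha^*$ tautologically makes $X^\alpha$ unitarily equivalent to $(Y^\alpha)^*$. The theorem then breaks into three separate claims: identifying this conjugate with the explicit rank-one perturbation of $X = S^*|_{K(b)}$, transferring cyclicity to $k_0^b$, and (in the extreme case) identifying the scalar spectral measure. A useful preliminary is $\mc{F}_\alpha 1_{P^2(\mu_\alpha)} = k_0^b = 1$: rewrite $(1-z\zeta^*)^{-1} = \tfrac{1}{2}\bigl(1 + \tfrac{1+z\zeta^*}{1-z\zeta^*}\bigr)$ and integrate against $\mu_\alpha$; the Herglotz representation of $(1+b\alpha^*)/(1-b\alpha^*)$, together with $b(0) = 0$ (which forces $\mu_\alpha(\T) = 1$ and eliminates the imaginary constant), yields $(\mc{F}_\alpha 1)(z) \equiv 1$.

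The main computation is the intertwining identity $X^\alpha \mc{F}_\alpha = \mc{F}_\alpha (Y^\alpha)^*$. I would verify it pointwise on $\D$ applied to arbitrary $f \in P^2(\mu_\alpha)$. Using $(\mc{F}_\alpha f)(0) = \int f\,d\mu_\alpha$ (immediate from $b(0)=0$) and $S^*(b\alpha^*) = b\alpha^*/z$, the left hand side unfolds to
\[
\frac{(\mc{F}_\alpha f)(z) - \int f\,d\mu_\alpha + b(z)\alpha^* \int f\,d\mu_\alpha}{z}.
\]
For the right hand side I would use the reproducing representation $(\mc{F}_\alpha h)(z) = (1-b(z)\alpha^*)\langle k_z, h\rangle_{L^2(\mu_\alpha)}$ with $k_z(\zeta) := (1-\overline{z}\zeta)^{-1} \in P^2(\mu_\alpha)$, and transfer the adjoint through $\langle k_z, (Y^\alpha)^* f\rangle = \langle \zeta k_z, f\rangle_{L^2(\mu_\alpha)}$. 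The two expressions then collapse onto each other via the algebraic identity $z\zeta^*/(1-z\zeta^*) = (1-z\zeta^*)^{-1} - 1$, valid for $\zeta \in \T$.

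For cyclicity, since $\mc{F}_\alpha$ sends $1 \mapsto 1$, it suffices to show that $1$ is $\ast$-cyclic for $(Y^\alpha)^*$ on $P^2(\mu_\alpha)$; but the forward orbit $\{(Y^\alpha)^n 1\}_{n \geq 0} = \{\zeta^n\}_{n \geq 0}$ is dense in $P^2(\mu_\alpha)$ by the very definition of the latter. When $b$ is extreme, $P^2(\mu_\alpha) = L^2(\mu_\alpha)$, making $Y^\alpha = Z^\alpha$ (and hence $X^\alpha$) unitary, and the difference $X^\alpha - X = \langle \cdot, 1\rangle S^*(b\alpha^*)$ is rank one by inspection. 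The spectral measure identification then follows from the moment identities $\langle (X^\alpha)^n 1, 1\rangle_{K(b)} = \int \zeta^n\,d\mu_\alpha$ for all $n \in \Z$, which are obtained by transport across the unitary equivalence $(X^\alpha, 1) \sim ((Z^\alpha)^*, 1)$, together with uniqueness in the trigonometric moment problem.

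The main obstacle is the intertwining in the second paragraph: one must carefully track the conjugations in the $L^2(\mu_\alpha)$ inner product and correctly interpret the rank-one term $\langle \cdot, 1\rangle S^*(b\alpha^*)$ as the linear (not conjugate-linear) operator $g \mapsto \langle 1, g\rangle_{K(b)} S^*(b\alpha^*)$; once the algebraic identity on $\T$ is in place, everything else is essentially bookkeeping.
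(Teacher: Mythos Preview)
This theorem appears in the paper only as classical background (with references to Clark and Sarason) and is not given a proof there, so there is nothing in the paper to compare your argument against directly. On its own merits your intertwining computation is correct and is the standard route: the identity $\zeta^*(1-z\zeta^*)^{-1} = z^{-1}\bigl((1-z\zeta^*)^{-1}-1\bigr)$ on $\T$, combined with $b(0)=0$, collapses both sides to the same expression exactly as you describe. The preliminary $\mc{F}_\alpha 1 = 1$ and the spectral-measure identification via moments are likewise fine.

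The one point that is not quite settled is cyclicity. You argue that the forward orbit $\{(Y^\alpha)^n 1\}=\{\zeta^n\}$ is dense in $P^2(\mu_\alpha)$; this shows $1$ is cyclic for $Y^\alpha$, hence (via $\mc{F}_\alpha$) cyclic for $(X^\alpha)^*$, or equivalently $*$-cyclic for $X^\alpha$. But the theorem as written asks for $\{(X^\alpha)^n 1\}_{n\ge 0}$ to span $K(b)$, and cyclicity for $Y^\alpha$ does not in general transfer to cyclicity for $(Y^\alpha)^*$. Indeed for $b\equiv 0$ one has $K(b)=H^2$, $X^\alpha=S^*$, and $(S^*)^n 1=0$ for all $n\ge 1$, so $1$ is not cyclic for $X^\alpha$ in the positive-power sense. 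Your phrasing ``$*$-cyclic for $(Y^\alpha)^*$'' suggests you may already be reading the statement as asserting $*$-cyclicity (equivalently, cyclicity for $(X^\alpha)^*$), in which case your argument is complete; but if literal cyclicity for $X^\alpha$ is intended, a separate argument is required and the degenerate case $b\equiv 0$ must be excluded.
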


\begin{remark}
    In the case where $b$ is an extreme point, the inverse of the weighted Cauchy transform $\mc{F} _\alpha$ implements a spectral realization for the unitary operator $X ^\alpha$.
\end{remark}

\subsection{The several variable case}

These and other related results were recently generalized to the several variable case of Drury-Arveson space $H^2 _d$ by the first author \cite{Jur2014AC}. The several variable generalizations of Clark's results as presented in \cite{Sarason-dB} demonstrate that many of the proofs are algebraic and dimension-free.  Fascinatingly, as soon as $d>1$, this theory applied to the commutative operator algebra $H^\infty _d$ reveals fundamental connections to non-commutative operator algebra theory, namely to Popescu's noncommutative Hardy space theory, and free semigroup algebra theory \cite{Jur2014AC,Pop96disk,Davidson2001}.

In contrast with the $d=1$ case, it is well known that not every bounded analytic function on the ball $\B ^d$ is an element of $H^\infty _d$. It is not difficult to show that a contractive analytic function $b$ on the unit
ball $\B ^d$ belongs to the Schur class $[H^\infty _d ] _1$ if and only if the deBranges-Rovnyak kernel:
$$ k^b (z,w) := \frac{1 - b(z) b(w) ^*}{1-zw^*}, $$ defines a positive kernel function on $\B ^d$ \cite{Sha2013}. Similarly, not every analytic function $H$ with non-negative real part on the ball can be realized
as $H_b := (1-b) ^{-1} (1+b)$ for a Schur class $b \in [ H^\infty _d ] _1$ \cite{McPutinar2005,Jur2010Herglotz}. If $H = H_b$ for some Schur class $b \in [H^\infty _d ]_1$ we say that $H$ belongs to the \emph{Herglotz-Schur} class.  Perhaps even more remarkably, and again in contrast with the $d=1$ case, the direct analogue of the classical Herglotz representation formula does not hold in the several variable setting: not every $H$ in the Herglotz-Schur class can be realized as the integral of the Herglotz integral kernel $\frac{1+z\zeta ^*}{1-z\zeta ^*}$ with respect to a non-negative Borel measure $\mu$ on the boundary of the ball $\partial \B ^d$.  Instead,
as shown in \cite{McPutinar2005,Jur2010Herglotz,Jur2014AC}, one needs to replace the Herglotz integral kernel with a `non-commutative kernel' which takes values in a certain operator subsystem of the Cuntz-Toeplitz operator system, and the measure $\mu $ with a positive linear functional on this operator system \cite{McPutinar2005,Jur2014AC}.

Beginning with this observation, natural analogues of the above theorems of Clark and related results were obtained in the several variable case of $b \in [H^\infty _d ]_1$ \cite{Jur2014AC}. To obtain a suitable generalization of the unitary perturbation theorem it was assumed that the Schur class function $b$ was \emph{quasi-extreme}, a property generalizing the Szeg\"{o} approximation property: $L^2 (b) = P^2 (b) = P^2 _0 (b)$ from the classical single variable case. An open problem that made further generalization difficult in the several variable setting was whether or not any AC functional $\mu _b$ had a canonical \emph{tight} extension to the full Cuntz-Toeplitz operator system \cite[Question 3.6]{Jur2014AC}.

Instead of providing a full summary of the results of \cite{Jur2014AC}, we will proceed with developing the theory for the multiplier algebra of vector-valued Drury-Arveson space $H^2 _d \otimes \H$.  In this setting the
AC maps are promoted to completely positive maps into $\L (\H)$. This does not significantly complicate the analysis from the scalar-valued case, for the most part.

\subsection{Outline}

In the following section, Section \ref{Herglotz}, we develop the noncommutative Cauchy or Fantappi\`{e} transform and Herglotz representation formulas for the vector-valued case. Our approach is slightly modified from that of \cite{McPutinar2005,Jur2014AC} and makes use of a partial $d$-isometry $V ^b$ acting on a reproducing kernel Hilbert space $\L (b)$ which we call the \emph{Herglotz space} associated to $b \in [H^\infty _d \otimes \L (\H )]_1$.

In Section \ref{extensions} we apply our Herglotz space framework and the partial isometry $V^b$ to construct a natural completely positive (CP) extension $\nu _b$ of the Aleksandrov-Clark CP map $\mu _b$ to the full Cuntz-Toeplitz operator system, and we prove that this extension is the unique \emph{tight} extension in the sense of \cite[Definition 3.2]{Jur2014AC}.  We then show that the set of all extensions of $\mu _b$ can be parametrized by cyclic isometric extensions of this partial isometry $V^b$ and several equivalent characterizations of the quasi-extreme Szeg\"{o} approximation property are developed.

Section \ref{Gleasonsection} contains our results on the Gleason problem for $K(b)$ and our generalization of Clark's unitary perturbation results \cite{Sarason-dB}. Solutions to the Gleason problem are the appropriate several variable analogue of the restriction of the backward shift to a deBranges-Rovnyak space in the single variable case. We show that the set of all contractive Gleason solutions for $K(b)$ is parametrized by the set of all contractive extensions of the partial $d-$isometry $V^b$ on the Herglotz space $\L (b)$. The equivalent characterizations of quasi-extremity are summarized in Theorem \ref{summarythm}.

Finally, Section \ref{Examplessection} gives some examples of the foregoing constructions in the case of inner $b$.

\subsection{Vector-valued RKHS}

    We will be working with vector-valued reproducing kernel Hilbert spaces (RKHS) of analytic functions on the unit ball $\B ^d = (\C ^d ) _1$.  Recall the following basic facts from RKHS theory:
    
Given a set $X \subset \C ^d$, and an auxiliary Hilbert space $\H$, a vector-valued RKHS $\K$ on $X$ is a Hilbert space of $\H$-valued functions on $X$ so that for any $x \in X$ the linear point evaluation maps
$K_x ^* \in \L ( \K , \H )$ defined by $$ K_x ^* F = F(x) \in \H; \quad \quad  F, \in \K $$ are bounded. We write $K_x := (K _x ^* ) ^* \in \L (\H , \K )$ for the Hilbert space adjoint. The
operator-valued function $ K : X \times X \rightarrow \L (\H )$:
$$ K (x,y) :=  K_x ^* K_y \in \L (\H ); \quad \quad x,y \in X, $$ is called the \emph{reproducing kernel} of $\K$. One usually writes $\K = \H (K )$. The reproducing kernel $K$ of any vector-valued RKHS on $X$ is
a \emph{positive kernel function} on $X$: A function $K : X \times X \rightarrow \L (\H)$ is an operator-valued positive kernel function on $X$ if for any finite set $\{ x_k \} _{k=1} ^N \subset X$, the matrix
$$ [ K (x_i , x_j ) ]  \in \L (\H ) \otimes \C ^{N\times N}, $$ is non-negative. The (vector-valued extension of the) theory of RKHS developed by Aronszajn and Moore (see \emph{e.g.} \cite{Paulsen-rkhs}) shows that there is a bijection between positive $\L (\H )$-valued kernel functions on $X\times X$ and RKHS of $\H$-valued functions on $X$. Namely, given any positive kernel $K$ on $X$ there is a RKHS $\K$ on $X$ so that $K$ is its reproducing kernel, $\K = \H (K)$. If $F:X\to \H$ is a function in $\K$, then the kernel $K$ reproduces the value $F(x)\in\H$ at the point $x\in X$ in the sense that for all $h\in\H$,
$$ \langle F(x), h\rangle_{\H} = \langle F, K_x h\rangle_{\H(K)}.$$

\subsection{Drury-Arveson, deBranges-Rovnyak and Herglotz spaces}

This paper takes place in the setting of vector-valued Drury-Arveson space $H^2 _d \otimes \H$, where $\H$ is finite dimensional or separable. This is the vector-valued reproducing kernel Hilbert space $\H (k)$ of $\H$-valued functions on the ball $X = \B ^d = (\C ^d ) _1$ corresponding to the several variable operator-valued Szeg\"{o} kernel: $$ k (z,w) := \frac{1}{1-zw^*} I _\H. $$
We will use the notation $H^\infty _d \otimes \L (\H ) := \mr{Mult} (H^2 _d \otimes \H )$ (the multiplier
algebra is the closure of this algebraic tensor product in the weak operator topology on $H^2 _d \otimes \H$). The \emph{Schur class} is the closed unit ball of this multiplier algebra, $[H^\infty _d \otimes \L (\H )] _1$.

Recall that not every contractive analytic $\L (\H )$-valued function $b$ belongs to $[H^\infty _d \otimes \L (\H)] _1$. Given such a $b$, it is not hard to check that $b$ belongs to the Schur class of vector-valued Drury-Arveson space if and only if $$ k^b (z, w ) := \frac{I - b(z) b(w) ^* }{1-zw^*} \in \L (\H); \quad \quad z,w \in \B ^d $$ defines a positive $\L (\H )$-valued kernel function on $\B ^d \times \B ^d$ \cite{BT1998DFP, Sha2013}. The deBranges-Rovnyak space $K(b) := \H (k^b)$ is defined as the corresponding RKHS of $\H$-valued analytic functions on $\B ^d$.  As in the classical
case it is straightforward to verify that $k - k^b$ (where $k$ is the Szeg\"{o} kernel for $H^2 _d \otimes \H$) is again a positive $\L (\H )$-valued kernel function on $\B ^d$ so that vector-valued RKHS theory implies that $K(b)$ is contained contractively in $H^2 _d \otimes \H$ \cite[Theorem 10.20]{Paulsen-rkhs}. That is, $K(b) \subset H^2 _d \otimes \H$ as vector spaces, and the injection is a contraction.

The \emph{Herglotz-Schur class} is the set of all $\L (\H )$-valued functions $H$ on $\B ^d$ with non-negative real part so that
$$ K (z,w) := \frac{1}{2} \frac{H(z) + H(w) ^*}{1-zw^*} \in \L (\H ), $$ defines a positive kernel function on $\B ^d$. A similar argument to \cite[Proposition 2.1, Chapter V]{NF} shows that any $b \in [H^\infty _d \otimes \L (\H ) ] _1$ decomposes as $b = b_0 + b_1$ on $\H = \H _0 \oplus \H _1$ where $\| b_0 (z) \| < 1$, $\forall z \in \B ^d$ is purely contractive, or \emph{pure}, and $b_1$ is a constant isometry on $\B ^d$ from $\H _1$ onto its range in $\H$. We assume throughout that $b=b_0$ is purely contractive so that $I -b(z)$ is invertible for $z \in \B ^d$. As before, there is a bijection between purely contractive Schur class functions $b \in [H^\infty _d \otimes \L (\H ) ] _1$ and pure Herglotz-Schur functions $H$ (Herglotz-Schur functions for which $H(z) +I$ is invertible on $\B ^d$) given by:
$$ H (z) = H_b (z):= (I-b (z) ) ^{-1} (I+b (z)); \quad \quad b \in [H^\infty _d \otimes \L (\H ) ]_1, $$ and
$$ b (z) = b_H (z) := (H(z) +I) ^{-1} (H (z) -I). $$
If $H = H_b$ is a pure Herglotz-Schur function, then the kernel $K^b$ can be expressed as
\be K^b (z,w) =\frac{1}{2} \frac{H_b (z) + H_b (w) ^* } {1-zw^*} = (I -b(z) ) ^{-1} k^b (z,w) (I- b(w) ^* ) ^{-1}. \label{Herglotzkernel} \ee In this case where $H = H_b$, for a purely contractive Schur class $b \in [H^\infty _d \otimes \L (\H) ] _1$, we define the \emph{Herglotz space} of $b$ to be $\L (b) := \H (K ^b)$, the RKHS  of $\H$-valued functions on $\B ^d$ with reproducing kernel $K ^b$. The above relationship between the kernels $k^b$ of $K(b)$ and $K^b$ of $\L (b)$ (for purely contractive $b$) implies that there is an isometric multiplier $U_b : K(b) \rightarrow \L (b)$:
\begin{lemma} \label{ontoisomult}
The map $U_b : K(b) \rightarrow \L (b)$ defined by multiplication by
\be  U_b (z) := (I-b(z) ) ^{-1}, \ee is an onto isometry. The action of $U_b$ on point evaluation kernels is
$$ U_b k_z ^b = K_z ^b (I -b(z) ^* ). $$
\end{lemma}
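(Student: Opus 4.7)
The plan is to read off the lemma from the factorization of kernels that is already recorded in equation \eqref{Herglotzkernel}, namely
\[
K^b(z,w) \;=\; (I-b(z))^{-1}\, k^b(z,w)\, (I-b(w)^*)^{-1} \;=\; U_b(z)\, k^b(z,w)\, U_b(w)^*.
\]
This is exactly the hypothesis under which the general RKHS change-of-kernel theorem yields that pointwise multiplication by $U_b$ is an isometric multiplier from $\mathcal H(k^b) = K(b)$ onto $\mathcal H(K^b) = \mathcal L(b)$. I would carry out a hands-on proof that mirrors this general principle in our operator-valued setting.

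First, I would verify the action on reproducing kernels. For $h\in\H$ and $w\in\B^d$ one computes directly
\[
(U_b k^b_z h)(w) \;=\; (I-b(w))^{-1} k^b(w,z) h,
\]
while on the other side
\[
(K^b_z (I-b(z)^*) h)(w) \;=\; K^b(w,z)(I-b(z)^*)h \;=\; (I-b(w))^{-1} k^b(w,z)(I-b(z)^*)^{-1}(I-b(z)^*) h,
\]
which collapses to the same expression. Hence $U_b k^b_z h = K^b_z(I-b(z)^*)h$ as $\H$-valued functions on $\B^d$, giving the stated formula for the action on point evaluation vectors (note that $(I-b(z))$ is invertible on $\B^d$ by the standing purely contractive assumption, so everything is well-defined).

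Second, I would verify isometry on the dense subspace spanned by kernel functions. Using the reproducing formulas in $K(b)$ and in $\mathcal L(b)$ together with the factorization of $K^b$,
\[
\| U_b k^b_z h \|_{\mathcal L(b)}^{2} \;=\; \langle K^b(z,z)(I-b(z)^*)h,\; (I-b(z)^*)h \rangle_{\H} \;=\; \langle (I-b(z))^{-1} k^b(z,z) h,\; (I-b(z)^*)h\rangle_{\H}.
\]
Pulling $(I-b(z))$ across by adjointness gives $\langle k^b(z,z) h, h\rangle_{\H} = \|k^b_z h\|_{K(b)}^{2}$. A standard polarization/finite-sum extension of this identity to finite linear combinations $\sum_j k^b_{z_j} h_j$ shows that $U_b$ is isometric on the dense span of kernel sections, hence extends uniquely to an isometry of $K(b)$ into $\mathcal L(b)$; this extension is just pointwise multiplication by $U_b(z)$ on the whole space.

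Finally, I would deduce surjectivity from the formula in the previous paragraph: the range of $U_b$ contains every vector $K^b_z(I-b(z)^*)h$, and since $(I-b(z)^*)$ is invertible on $\H$ for each $z\in\B^d$, these vectors exhaust $\{K^b_z g : z\in\B^d,\, g\in\H\}$, whose linear span is dense in $\mathcal L(b)$. A bounded operator with dense range that is isometric is unitary onto its target, so $U_b : K(b)\to\mathcal L(b)$ is an onto isometry. There is no real obstacle here beyond bookkeeping with the operator-valued kernels; the entire lemma is a direct consequence of the kernel factorization \eqref{Herglotzkernel} that was already established.
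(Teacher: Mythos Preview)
Your proof is correct and is precisely the standard RKHS argument that the paper has in mind: the paper does not give a proof of this lemma at all, merely noting that the kernel factorization \eqref{Herglotzkernel} ``implies that there is an isometric multiplier $U_b : K(b) \rightarrow \L (b)$.'' Your verification of the action on kernel vectors, the isometry computation on their span, and the surjectivity via invertibility of $I-b(z)^*$ is exactly how one fills in this omitted detail.
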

We will omit the superscript and subscript $b$ when this is clear from context.
\section{Herglotz representation formula and Fantappi\`{e} transform}
\label{Herglotz}

A bit of straightforward algebra using the formula (\ref{Herglotzkernel}) above for the Herglotz reproducing kernel $K = K^b$, shows
\ba  (K_z - K_0 )^* (K_w - K_0) & = & zw^* K_z ^* K_w \nonumber \\
& = &  (z^* K_z )^* (w^* K_w); \quad \quad z,w \in \B ^d. \nonumber \ea
In the above, $z$ is viewed as a strict contraction from $\L (b) \otimes \C ^d$ into $\L (b)$ so that $z^* K_z \in \L (\H , \L (b) \otimes \C ^d)$ obeys
$$ z^* K_z h := \begin{bmatrix} \ov{z_1} K_z h \\ \vdots \\ \ov{z_d} K_z h \end{bmatrix} = \begin{bmatrix} \ov{z_1} \\ \vdots \\ \ov{z_d} \end{bmatrix} K_z h \in \L (b) \otimes \C ^d. $$

It follows that one can define a partial $d-$isometry on $\L (b)$ as follows: Set $$\dom{\check{V}} := \bigvee _{w\in \B^d; \ h \in \H} w^* K_w h \subset \L (b) \otimes \C ^d, $$ and
$$ \ran{\check{V}} := \bigvee _{w \in \B^d; \ h \in \H} (K_w - K _0 ) h \subset \L (b).$$ Here and throughout $\bigvee$ denotes closed linear span.
The above calculations show that the linear map $\check{V} : \dom{\check{V}} \rightarrow \ran{\check{V}}$ defined by
$$ w^* K_w h \mapsto (K_w - K_0 ) h, $$ is an isometry from its domain, $\dom{\check{V}}$ onto its range, $\ran{\check{V}}$. Let $V = V^b$ be the partial isometric
extension of $\check{V}$ to all of $\L (b) \otimes \C ^d$ (which is zero on the orthogonal complement of $\dom{\check{V}}$ in $\L (b) \otimes \C ^d$). Then $V$ is a partial $d$-isometry on the $\H$-valued RKHS $\L (b)$.

It will be helpful to describe the orthogonal complements of the initial and final spaces of $V$: First observe that $F\in \L (b)$ is orthogonal to $\ran{V} = \cap _{z \in \B ^d} \ker{ (K_z -K_0 ) ^*}$ if and only if for all $w\in \B^d$,
  \begin{equation*}
    0 = (K_w ^* - K_0 ^*) F =F(w)-F(0),
  \end{equation*}
in other words,  $F(w)=F(0)$ for all $w$ and thus $F:\B^d\to \H$ is constant. Note that $V$ may be that $V$ is surjective in which case $\L(b)$ contains no non-zero constant functions.

Similarly, a $d$-tuple $\mbf{F} = (F_1, \dots F_d) ^T \in \L (b)\otimes \C^d$ (the superscript $T$ denotes transpose) belongs to $\ker{V} = \dom{\check{V}} ^\perp$ if and only if
$$  0= (w^* K_w ) ^* \mbf{F} = \sum _{j=1} ^d w_j F _j (w). $$
  When $d=1$ this condition can only hold if $F\equiv 0$, so that $\ker{V} ^\perp =\L (b)$. On the other hand when $d>1$ there can exist nontrivial solutions $(F_1, \dots F_d)$ to $\sum_{j=1}^d w_jF_j(w)=0$ in $\L (b)$. One can show that $\ker{V ^b}$ is never trivial when $d>1$, see Remark \ref{ntkernremark}.

\subsection{A CP map on a symmetrized Cuntz-Toeplitz operator subsystem}

Recall that the full Fock space $F^2 _d$ over $\C ^d$ is the direct sum of all powers of tensor products of $\C ^d$ with itself:
\ba  F^2 _d & := & \C \oplus \C ^d \oplus \left( \C ^d \otimes \C ^d \right) \oplus \left( \C ^d \otimes \C ^d \otimes \C ^d \right) \oplus ... \nn \\
& = & \bigoplus _{k=0} ^\infty \left( \C ^d \right) ^{k \cdot \otimes}. \nn \ea Given a fixed orthonormal basis $\{ e_k \}$ of $\C ^d$, the \emph{left creation operators} $L_k \in \L (F ^2 _d )$ are defined by tensoring on the left with $e_k$:
$$ L_k f := e_k \otimes f ; \quad \quad f \in F^2 _d. $$ Each $L_k$ is an isometry, the $L_k$ have orthogonal ranges ($L_k ^* L_j = \delta _{kj} I$) and
$L := (L _1 , ..., L_d ) : F^2 _d \otimes \C ^d \rightarrow F^2 _d$ defines a non-commuting row-isometry on $F^2 _d$ which we call the free or non-commutative shift. The non-commutative disk algebra $\mc{A} := \mc{A} _d$ is the unital
norm-closed algebra generated by the left creation operators,
$$ \mc{A} := \bigvee _{\alpha \in \F ^d} L ^\alpha. $$ Here $\F ^d$ denotes the unital free semigroup on $d$ letters (the unit is the empty word $\emptyset$ and one defines $L^\emptyset = I$). We call the corresponding operator system $(\mc{A} + \mc{A} ^*) ^{-\| \cdot \|}$ the \emph{Cuntz-Toeplitz operator system} (we will simply write $\mc{A} + \mc{A} ^*$ for this norm-closure).

By the Bunce-Frazho-Popescu dilation theorem \cite{Pop89iso}, $V=V^b$ has a minimal isometric dilation $W=W^b$ on $\K _b \supset \L (b)$ obeying $W^* | _{\L (b)} = V^*$. This shows that $W^*$ is a $d-$contractive extension
of $V^*$ in the sense that $W^*$ agrees with $V^*$ on the final space of $V$: $W^* (V V ^* ) = V^*$. We use the notation $D \supseteq V$ for any $d-$contractive extension of $V$ on $\J \supset \L (b)$. The following is a general fact that holds for contractive extensions of any partial isometry between Hilbert spaces:

\begin{lemma} \label{contractext}
If $D$ is a $d-$contraction on $\J \supset \L (b)$ then $V \subseteq D$ if and only if $V^* \subseteq D^*$.
\end{lemma}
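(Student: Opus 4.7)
The plan is to prove both directions of the biconditional via the equality case of the Cauchy--Schwarz inequality applied to $D^*$. Consistent with the paper's convention for contractive extensions of partial isometries (as in the equation $W^*(VV^*) = V^*$ appearing just before the lemma), I read ``$V \subseteq D$'' as the assertion $D V^* V = V$, equivalently $Dv = Vv$ for every $v$ in the initial space $\dom{\check V}$ of $V$, and symmetrically read ``$V^* \subseteq D^*$'' as $D^* V V^* = V^*$, equivalently $D^* f = V^* f$ for every $f$ in the final space $\ran{V}$. The two conditions are interchanged by the swap $(V,D) \leftrightarrow (V^*, D^*)$, so it suffices to establish one direction.

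For the forward implication, assume $V \subseteq D$ and fix $v \in \dom{\check V}$. Set $f := Vv \in \ran{V}$; because $V$ acts isometrically on its initial space, $\|f\| = \|v\|$, and $V^* f = V^* V v = v$ since $V^*V$ is the orthogonal projection onto $\dom{\check V}$. Using $\|D^*\| = \|D\| \leq 1$, the defining adjoint relation, and the hypothesis $Dv = Vv$, one computes
\begin{align*}
\|D^* f - v\|^2
&= \|D^* f\|^2 - 2\re{\langle D^* f, v\rangle} + \|v\|^2 \\
&\leq \|f\|^2 - 2\re{\langle f, Dv\rangle} + \|v\|^2 \\
&= \|v\|^2 - 2\|Vv\|^2 + \|v\|^2 \;=\; 0.
\end{align*}
Hence $D^* f = v = V^* f$ for every $f \in \ran{V}$, which is precisely $V^* \subseteq D^*$.

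The converse follows by applying the same argument to the partial isometry $V^*$ (whose initial and final spaces are $\ran{V}$ and $\dom{\check V}$, respectively) and the contraction $D^*$, which has the same operator norm as $D$. The only conceptual point that requires care is correctly parsing the symbol ``$\subseteq$'' for partial isometries; once that is settled, the whole argument reduces to a single clean application of the Cauchy--Schwarz equality case, and there is no substantive obstacle to overcome.
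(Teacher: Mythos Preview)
Your proof is correct and follows essentially the same idea as the paper's: both arguments use that $\|D^*\|\le 1$ together with the fact that $V$ is isometric on its initial space to force $D^*f=V^*f$ for $f\in\ran{V}$. The only cosmetic difference is that the paper first decomposes $D^*f$ via the projection $V^*V$, shows the projected piece equals $V^*f$ by taking adjoints of $D(V^*V)=V(V^*V)$, and then uses the Pythagorean identity to kill the orthogonal piece; you instead expand $\|D^*f - V^*f\|^2$ directly, which is slightly more streamlined but amounts to the same computation.
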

\begin{proof}
Suppose that $V \subseteq D$. Since $D(V^*V) = V (V^* V)$ it follows that $(VV^*) D (V^* V) = D (V^* V)= V (V^*V) = (VV^*)V$. Taking adjoints shows $V^* (VV^*) = (V^*V) D^* (VV^*)$.
It follows that if $f = VV^*f$ is a unit norm element in $\ran{V}$ then
\ba 1 & \geq & \| D^* f \| ^2 = \| (V^*V) D ^* f \| ^2 + \| (I - V^* V ) D^* f \| ^2 \nonumber \\
& = & \| V^* f \| ^2 + \| (I- V^*V) D^* f \| ^2 \nonumber \\
& = & 1 + \| (I- V^*V) D^* f \| ^2. \nonumber \ea This proves that $D^* (VV^*) = (V^*V) D (VV^*) = V^* (VV^*)$
and $V^* \subseteq D^*$. The converse is similarly easy to prove.
\end{proof}

It follows that when $\J = \L(b)$, any $d-$contractive extension $D$ of $V$ has the form $V(Y) = V + Y$ where $Y : \ker{V} \rightarrow \ran{V} ^\perp$ is a contraction.

\begin{lemma} \label{RKext}
A $d$-contraction $D$ acting in a Hilbert space $\J\supset \L(b)$ is
an extension of $V$ if and only if
$$ K_z h = (I-z^*D) ^{-1} K_0 h.$$
\end{lemma}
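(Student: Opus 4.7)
The lemma really says that the resolvent identity $K_z h = (I - z^* D)^{-1} K_0 h$ is a rewriting of the defining relation $\check V(z^* K_z h) = (K_z - K_0) h$ of the partial $d$-isometry $V$, with $D$ in place of $V$. The plan is therefore to pass between the two forms and then invoke density of $\{z^* K_z h\}$ in $\dom{\check V}$.

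First I would verify that for any row contraction $D = (D_1, \ldots, D_d)$ on $\J$ and any $z \in \B^d$, the operator $z^* D := \sum_{j=1}^d \bar z_j D_j \in \L(\J)$ satisfies
$$ \|(z^* D) f\| = \|D(z^* f)\| \leq \|z^* f\| = \|z\|_{\C^d} \|f\|, $$
so that $\|z^* D\| \leq \|z\| < 1$ and $I - z^* D$ is invertible on $\J$. Hence the resolvent identity, holding for all $z \in \B^d$ and $h \in \H$, is equivalent to
$$ D(z^* K_z h) = (z^* D) K_z h = K_z h - K_0 h \qquad (z \in \B^d,\; h \in \H). $$
By the very definition of $V$, the right-hand side equals $V(z^* K_z h)$, so the resolvent identity is equivalent to $D$ and $V$ agreeing on every vector of the form $z^* K_z h$.

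For the forward direction I would use that if $V \subseteq D$ then $D$ and $V$ agree on $\dom{\check V}$ (this is the content $D(V^*V) = V(V^*V)$ already invoked in the proof of Lemma \ref{contractext}), and in particular on each $z^* K_z h$, giving the resolvent formula. Conversely, the resolvent identity forces $D$ and $V$ to coincide on the spanning set $\{z^* K_z h : z \in \B^d, h \in \H\}$ of $\dom{\check V}$, and by linearity and boundedness of $D$ and $V$ this extends to agreement on all of $\dom{\check V}$, which is exactly the meaning of $V \subseteq D$. No step is genuinely difficult: the lemma is essentially a translation between two encodings of the same data. The only subtleties are verifying invertibility of $I - z^* D$ on $\B^d$ and remembering that $V$ vanishes on $\ker V$, so that the extension condition $V \subseteq D$ really only constrains $D$ on $\dom{\check V}$ (which is why the resolvent identity, which only involves $D$ acting on vectors in $\dom{\check V}$, is able to capture it exactly).
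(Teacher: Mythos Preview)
Your proposal is correct and follows essentially the same approach as the paper's proof: both establish that the resolvent identity is equivalent to $D(z^* K_z h) = (K_z - K_0)h$ via elementary algebra and invertibility of $I - Dz^*$, and both conclude by observing this is precisely the defining relation of $V$ on its initial space. You supply a bit more detail than the paper (the explicit norm estimate for invertibility, and the density/continuity argument for the converse), but the argument is the same.
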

   In particular this holds for $D=V$ or $D =W$, the minimal isometric dilation of $V$.
\begin{proof}
    Since the initial space of $V$ is spanned by vectors of the form $z^* K_z h $ and $D$ extends $V$, it follows that for any $z \in \B ^d$ and $h \in h$,
$$ D (z^* K_z h) = V (z^* K_z h) = (K_z - K_0)h. $$ Writing this out then shows:
$$ (D_1 \ov{z_1} + ... + D_d \ov{z_d}) K_z h = (K_z - K_0 ) h.$$ Solving for $K_0 h$ yields
$$ K_0 h = (I-Dz^*) K_z h, $$ and since $Dz^*$ is a strict contraction, one can invert this expression to obtain
$$ K_z h = (I-Dz^*) ^{-1} K_0 h.$$

On the other hand, if $K_z h = (I-Dz^*) ^{-1} K_0 h$ for all $z$ and
$h$, then the above steps reverse to show that $D (z^* K_z h) = (K_z - K_0)h = V (z^*
K_z h)$, and thus $D$ extends $V$.
\end{proof}

\begin{lemma} \label{starextend}
    Let $W$ be a $d-$isometry on a Hilbert space $\H$. The map $\pi _W : \mc{A} \rightarrow \L (\H )$ defined by $\pi _W (L^\alpha ) := W^\alpha$ is a completely isometric unital homomorphism
which obeys $\pi _W ( (L^\alpha ) ^* L ^\beta ) = (W^\alpha ) ^* W^\beta$, for all $\alpha, \beta \in \F ^d$. Moreover $\pi _W$ extends to a completely contractive unital $*-$homomorphism $\Pi _W : \mc{E} = C^* (\mc{A}) \rightarrow
\L (H)$ defined by $\Pi _W ( L ^\alpha (L^\beta ) ^* ) := \pi _W (L ^\alpha ) \pi _W (L^\beta ) ^*.$
\end{lemma}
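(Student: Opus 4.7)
The plan has three pieces: verify the algebraic claims about $\pi_W$ directly from the row-isometry relations $L_i^* L_j = \delta_{ij} I$ (which $W$ shares), obtain the completely isometric bound via Popescu's noncommutative dilation theory, and extend to the $*$-homomorphism $\Pi_W$ on $\mc{E}$ through the dense $*$-subalgebra generated by products $L^\alpha (L^\beta)^*$.

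First, since $\{L^\alpha \mbf{1}\}_{\alpha \in \F^d}$ is the standard orthonormal basis of $F^2_d$, the operators $\{L^\alpha\}$ are linearly independent in $\L(F^2_d)$, so $\pi_W$ is well-defined on their algebraic span. It is unital and multiplicative because $L^\alpha L^\beta = L^{\alpha\beta}$ and likewise for $W$. The orthogonality relations imply inductively that $(L^\alpha)^* L^\beta$ vanishes unless one of $\alpha, \beta$ is an initial subword of the other, in which case it equals $L^\gamma$ or $(L^\gamma)^*$ for the leftover suffix $\gamma$. The same formula holds verbatim with $W$ in place of $L$, yielding the identity $\pi_W((L^\alpha)^* L^\beta) = (W^\alpha)^* W^\beta$.

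For complete isometry on $\mc{A}$, one direction is Popescu's noncommutative von Neumann inequality, which gives $\|[p_{ij}(W)]\| \leq \|[p_{ij}(L)]\|$ for any row contraction $W$. For the reverse, I would use the Wold-type decomposition for row isometries: $W = W_s \oplus W_c$ with $W_s$ unitarily equivalent to $L \otimes I_{\mc{D}}$ for some multiplicity space $\mc{D}$, and $W_c$ a Cuntz row unitary. When the shift part is nontrivial, $\pi_L$ appears as a direct summand of $\pi_W$ (up to amplification) and we are done. When $W$ is purely Cuntz, I would invoke Popescu's result that $\mc{A}$ embeds completely isometrically into the Cuntz algebra $\mc{O}_d$ via the quotient $\mc{E} \twoheadrightarrow \mc{O}_d$, combined with simplicity of $\mc{O}_d$ for $d\ge 2$.

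Finally, for $\Pi_W$, define it on the dense $*$-subalgebra $\mc{A}_0 := \mr{alg}\{L^\alpha (L^\beta)^* : \alpha,\beta\in\F^d\}$ of $\mc{E}$ by the formula in the statement. The product rule from the first step makes $\mc{A}_0$ closed under products and involution, and every linear relation among its generators reduces to the row-isometry identities shared by $W$; hence $\Pi_W$ is a well-defined unital $*$-homomorphism on $\mc{A}_0$. As a $*$-homomorphism into a $C^*$-algebra it is automatically completely contractive, so extends continuously to the norm closure $\mc{E}$. The main obstacle is the complete isometry claim when $W$ is purely Cuntz, which rests on the nontrivial Popescu result that $\mc{A}$ embeds completely isometrically into $\mc{O}_d$.
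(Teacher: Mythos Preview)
Your proposal is correct and in fact goes considerably further than the paper's own proof, which simply observes the row-isometry identity $(W_k)^* W_j = \delta_{kj} I$ to obtain $\pi_W((L^\alpha)^* L^\beta) = (W^\alpha)^* W^\beta$ and then defers all remaining assertions to Popescu's results in \cite{Pop96disk,Pop98universal}. Your sketch (von Neumann inequality for one direction, Wold decomposition plus the completely isometric embedding $\mc{A}_d \hookrightarrow \mc{O}_d$ for the other, and universality for the $*$-extension) is precisely the content of those cited works, so the approaches coincide.

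One small point: the sentence ``as a $*$-homomorphism into a $C^*$-algebra it is automatically completely contractive'' is true for $*$-homomorphisms \emph{from} a $C^*$-algebra, but your $\mc{A}_0$ is only a dense $*$-subalgebra, so the spectral-radius argument does not apply directly. The clean fix is exactly what Popescu proves: $\mc{E}$ is the universal $C^*$-algebra generated by a row isometry, so any row isometry $W$ induces a (necessarily contractive) $*$-representation $\Pi_W$ by universality. Your well-definedness argument on $\mc{A}_0$ is the combinatorial core of that universal property, so nothing is wrong, but the contractivity bound should be phrased via universality rather than as an automatic fact about $*$-maps out of a pre-$C^*$-algebra.
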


    Any such map $\pi _W$ is the restriction of a $*$-representation $\Pi _W$ of $\mc{E}$, and is hence \emph{$*$-extendible} in the sense of \cite{DPac}.

\begin{proof}
    Since $W$ is a row-isometry it follows that $(W_k ) ^* W_j = \delta _{ij } I _\H$, and the relation $\pi ( (L^\alpha ) ^* L ^\beta ) = (W^\alpha ) ^* W^\beta$ follows from this.
The remaining assertions are standard results of Popescu \cite{Pop96disk,Pop98universal}
\end{proof}

Note here that by results of
\cite{Jur2014AC}, $$ \bigvee _{z \in \B^d} (I-z^*L )^{-1} = \bigvee _{\n \in \N ^d} L ^{\n} =: \mc{S} $$ is the norm-closed operator subspace of $\mc{A}$ spanned by the symmetrized monomials $L^{\n}$.
Here recall that $\N ^d$ is the unital additive semigroup of all $d$-tuples of non-negative integers, and if $\la : (\F ^d , \cdot) \rightarrow (\N ^d , + )$ is the letter counting map, $$ L^\n := \sum _{\la (\alpha ) = \n} L^\alpha. $$ For example, if $d=2$,
$$ L^{(1,2)} := L_1 L_2 ^2 + L_2 ^2 L_1 + L_2 L_1 L_2. $$ Also if $\n= (n_1 , ..., n_d ) \in \N ^d$ define $| \n | := n_1 + ... + n_d$. The symmetrized operator system $\mc{S} + \mc{S} ^*$,
as well as the full Cuntz-Toeplitz operator system $\mc{A} + \mc{A}^*$ enjoy the \emph{semi-Dirichlet property} \cite{Dav2011,Jur2014AC}:
$$ \mc{S} ^* \mc{S} \subset (\mc{S} + \mc{S} ^* ) ^{-\| \cdot \|} \quad \quad \mbox{and} \quad \quad \mc{A} ^* \mc{A} \subset (\mc{A} + \mc{A} ^*) ^{-\| \cdot \|}. $$ To simplify notation we will simply write $\mc{S} + \mc{S} ^*$ and $\mc{A} + \mc{A}^*$ in place of the norm-closed operator systems
$\ov{\mc{S} + \mc{S} ^* } ^{\| \cdot \|}$ and $\ov{\mc{A} + \mc{A} ^* } ^{\| \cdot \|}$. We will use the notations $CP (\mc{S} , \H )$ and $CP (\mc{A} , \H)$ for the sets of all completely positive maps from $\mc{S} + \mc{S} ^*$ and $\mc{A} + \mc{A} ^*$ into $\L (\H)$.

\begin{prop} \label{CPmap}
    Define $\mu _b : \mc{S} + \mc{S} ^* \rightarrow \L (\H )$ by
$$ \mu _b  \left( (I-zL^* ) ^{-1} (I - L w^* ) ^{-1} \right) := K^b (z,w). $$ Then $\mu _b$ is a completely positive map obeying
$$ \mu _b  \left( (I-zL^* ) ^{-1} (I - L w^* ) ^{-1} \right) = K_0 ^* (I-zV^*)^{-1} (I-Vw^* )^{-1} K_0, $$ and
$ \mu _b (L ^{\n} ) = K_0 ^* V^{\n} K_0. $
\end{prop}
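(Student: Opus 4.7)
The plan is to verify all three claims by exploiting the Kolmogorov factorization $K^b(z,w) = K_z^* K_w$ of the Herglotz reproducing kernel.

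First, apply Lemma \ref{RKext} with $D = V$: this gives $K_w h = (I - Vw^*)^{-1} K_0 h$, so $K_w = (I - Vw^*)^{-1} K_0$ as an operator in $\L(\H, \L(b))$, and taking adjoints $K_z^* = K_0^*(I - zV^*)^{-1}$. Multiplying yields $K^b(z,w) = K_z^* K_w = K_0^*(I - zV^*)^{-1}(I - Vw^*)^{-1} K_0$, which is the second displayed formula. Specializing to $z = 0$ gives $\mu_b((I - Lw^*)^{-1}) = K_0^*(I - Vw^*)^{-1} K_0$. Expanding each side in power series via $(I - Lw^*)^{-1} = \sum_\n \bar w^\n L^\n$ and $(I - Vw^*)^{-1} = \sum_\n \bar w^\n V^\n$ (both obtained by grouping noncommutative monomials of length $|\alpha|$ by the symmetric type $\lambda(\alpha) = \n$) and matching Taylor coefficients in $\bar w^\n$, which is justified by the linear independence of $\{L^\n\}_\n$ in $\L(F^2_d)$, produces $\mu_b(L^\n) = K_0^* V^\n K_0$.

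For complete positivity, I would invoke the Bunce-Frazho-Popescu minimal isometric dilation of $V$ to a row isometry $W$ on $\K_b \supset \L(b)$ satisfying $W^*|_{\L(b)} = V^*$, and the associated $*$-representation $\Pi_W : \mc{E} \to \L(\K_b)$ from Lemma \ref{starextend}. Define a candidate map $\tilde\mu_b : \mc{A} + \mc{A}^* \to \L(\H)$ by $\tilde\mu_b(X) := K_0^* \Pi_W(X) K_0$; this is automatically CP as a compression of a $*$-representation by $K_0 \in \L(\H, \L(b)) \subset \L(\H, \K_b)$. The identifications $K_0^* = K_0^* P_{\L(b)}$ (since $\ran{K_0} \subseteq \L(b)$) and $V^\n = P_{\L(b)} W^\n|_{\L(b)}$ (from the dilation property) combine to give $\tilde\mu_b(L^\n) = K_0^* W^\n K_0 = K_0^* V^\n K_0 = \mu_b(L^\n)$, and by taking adjoints $\tilde\mu_b((L^\n)^*) = \mu_b((L^\n)^*)$. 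Norm-density of the span of $\{L^\n, (L^\n)^*\}_\n$ in $\mc{S} + \mc{S}^*$, combined with boundedness of both maps, then forces $\tilde\mu_b|_{\mc{S} + \mc{S}^*} = \mu_b$, so $\mu_b$ is CP.

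The point requiring most care is the well-definedness of $\mu_b$ as a bounded linear map on the whole of $\mc{S} + \mc{S}^*$ (rather than merely a prescription on the span of kernel generators), together with the compatibility of the dilation-compression formula with the prescribed kernel values. The cleanest way to handle both simultaneously is to take $\mu_b := \tilde\mu_b|_{\mc{S} + \mc{S}^*}$ as the definition and verify a posteriori that $\mu_b\bigl((I - zL^*)^{-1}(I - Lw^*)^{-1}\bigr) = K^b(z, w)$; by the density matching this reduces to the coefficient identity $K_0^*(W^\m)^* W^\n K_0 = K_0^*(V^\m)^* V^\n K_0$, which holds because the reproducing kernel structure forces $K_0 h \otimes e_j \in \dom{\check V}$ (realized as the limit $\lim_{t \to 0^+} t^{-1}(te_j)^* K_{te_j} h$), so $V$ acts isometrically on the symmetric orbit of $K_0$ and no dilation defect appears when flanked by $K_0$ and $K_0^*$.
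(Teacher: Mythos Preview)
Your overall strategy matches the paper's: define the CP map via the minimal isometric dilation $W$ of $V$, then verify that it reproduces the prescribed kernel values. Your first two paragraphs are correct, and in particular the computation $K^b(z,w)=K_0^*(I-zV^*)^{-1}(I-Vw^*)^{-1}K_0$ via Lemma~\ref{RKext} is exactly what the paper does.

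The gap is in your last paragraph. Having set $\mu_b:=\tilde\mu_b|_{\mc{S}+\mc{S}^*}$ with $\tilde\mu_b(\cdot)=K_0^*\Pi_W(\cdot)K_0$, you need to check $\tilde\mu_b\bigl((I-zL^*)^{-1}(I-Lw^*)^{-1}\bigr)=K^b(z,w)$. You reduce this to the coefficient identity $K_0^*(W^{\m})^*W^{\n}K_0=K_0^*(V^{\m})^*V^{\n}K_0$ and then justify it by asserting that $K_0 h\otimes e_j\in\dom{\check V}$ implies ``$V$ acts isometrically on the symmetric orbit of $K_0$.'' That inference is not warranted: knowing that $K_0 h\otimes e_j$ lies in the initial space of $V$ gives $W_jK_0h=V_jK_0h$ for one step, but it does \emph{not} follow that $V^{\n}K_0 h\otimes e_j$ remains in $\ker{V}^\perp$, so you cannot iterate. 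The identity you want is in fact true, but for a different reason: Lemma~\ref{RKext} applied to $D=W$ gives $(I-Ww^*)^{-1}K_0=K_w=(I-Vw^*)^{-1}K_0$, whence $W^{\n}K_0=V^{\n}K_0\in\L(b)$ by matching coefficients, and then co-invariance $W^*|_{\L(b)}=V^*$ yields $(W^{\m})^*W^{\n}K_0=(V^{\m})^*V^{\n}K_0$.

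But the detour through coefficients is entirely unnecessary. Since $\Pi_W$ is a $*$-homomorphism (as $W$ is a row isometry), $\tilde\mu_b\bigl((I-zL^*)^{-1}(I-Lw^*)^{-1}\bigr)=K_0^*(I-zW^*)^{-1}(I-Ww^*)^{-1}K_0$, and a second application of Lemma~\ref{RKext}, this time with $D=W$, gives $K_0^*(I-zW^*)^{-1}(I-Ww^*)^{-1}K_0=K_z^*K_w=K^b(z,w)$ directly. That is precisely the paper's argument: one invocation of Lemma~\ref{RKext} for $W$ and one for $V$, linked through the common value $K_z^*K_w$.
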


\begin{proof}
    Given any $d-$contraction $T$ on a separable Hilbert space $\mc{H}$ the map $\phi _T (L ^\alpha ) := T ^\alpha$, $\alpha \in \F ^d$ defines a completely contractive unital map and so
extends to a CPU map on $\mc{A} + \mc{A} ^*$ \cite[Corollary 2.2]{Pop96disk}. It follows that $\mu _b (L^{\n}) := K_0 ^* V^{\n} K_0$ belongs to $CP (\mc{S} , \H )$. However, since $V$ is not an isometry, it is not obvious that $\mu _b \left( (I-zL^* ) ^{-1} (I - L w^* ) ^{-1} \right) =  K_0 ^* (I-zV^*)^{-1} (I-Vw^* )^{-1} K_0$. Indeed, given any $d$-contraction $T$ on $\H$, the relation $\phi _T ( (L ^\alpha ) ^* L ^\beta ) = (T ^\alpha ) ^* T^\beta$ holds for all $\alpha ,\beta \in \F ^d$ if and only if $T$ is an isometry.

    Let $W$ be the minimal isometric dilation of $V = V^b$ on $\K _b \supset \L (b)$. Then for any $\n \in \N ^d$, $\mu _b (L ^{\n} ) = K_0 ^* W^{\n} K_0$.
Since $W\supseteq V$ extends $V$, we have that for any $z, w \in \B ^d$,
\ba \mu _b \left( (I-zL^* ) ^{-1} (I - L w^* ) ^{-1} \right) & = & K_0 ^* (I-zW^*) ^{-1} (I-Ww^*)^{-1} K_0
\nn \\
& =& K_z ^* K_w \quad \quad \mbox{by Lemma \ref{RKext}} \nn \\
& = & K^b (z,w) \nn \\
& = & K_0 ^* (I -zV^* ) ^{-1} (I - V w^* ) ^{-1} K_0; \quad \quad \mbox{by Lemma \ref{RKext} again}. \nn \ea  The first line in the above equation
can be verified using that both $W$ and $L$ are row isometries so that their component operators obey $L_k ^* L_j = \delta _{kj} I $ and $\delta _{kj} I = W_k ^* W_j$.
\end{proof}

Observe that \ba 2 K(z,0) = H (z) + H(0)^* & = & H_b (z) + \re{H_b (0)} -i \im{H_b (0)} \nn \\
&  = & H_b (z) + K(0,0) -i\im{H_b (0)}. \nn \ea
One can then express the Herglotz-Schur function $H_b (z)$ as 
\ba  H(z) & = & 2K(z, 0 ) -  K(0,0) +i \im{H (0)} \nn \\
& = & K_0 ^* \left( 2(I -zV^* ) ^{-1} -  I \right) K_0 + i \im{H(0)} \nn \\
& = & K_0 ^* (I-zV^*) ^{-1} (I +zV^*) K_0 +i \im{H(0)} \nn \\
& = & \mu _b \left( (I-zL^* )^{-1} (I + zL^*) \right) + i \im{H (0)}. \nn \ea

The span of all $(I -Lz^* ) ^{-1} (I +Lz^*) $ for $z \in \B ^d$ is dense in $\mc{S}$ so that $\mu _b$ is uniquely defined by this formula. Moreover, given any $\mu \in CP (\mc{S} , \H )$, it is not hard to see that
$$ H(z) := \mu \left( (I-zL^*) ^{-1} (I+Lz^*) \right) \in \L (\H), $$ defines a pure Herglotz-Schur function so that the map $b \mapsto \mu _b$ is bijective (modulo imaginary constants).

We then have:
\begin{thm}{ (Noncommutative Herglotz formula)}
For any $b \in \left[ H^\infty _d \otimes \L (\H ) \right] _1$ there is a unique $CP$ map $\mu _b : \mc{S} +\mc{S} ^* \rightarrow \L (\H )$
such that
\be H_b (z) = (1+b(z))(1-b(z)) ^{-1} = \mu _b \left( (I + zL^* ) (I- zL^*) ^{-1} \right) +i \im{H_b (0)}. \label{HRF} \ee  The map $b \mapsto \mu _b$
is a bijection (modulo $\im{H_b (0)} \in \L (\H)$).
\end{thm}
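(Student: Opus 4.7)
My plan is to decompose the theorem into three assertions---existence of $\mu_b$ satisfying (HRF), uniqueness of $\mu_b$, and bijectivity modulo $i\,\im{H_b(0)}$---each of which should follow from the structure already built around the partial $d$-isometry $V^b$ and the symmetrized system $\mc{S}+\mc{S}^*$.

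For existence I would take $\mu_b$ to be the CP map produced by Proposition \ref{CPmap} and read (HRF) off directly from the displayed calculation immediately preceding the theorem statement: beginning with $H_b(z) = 2K^b(z,0) - K^b(0,0) + i\,\im{H_b(0)}$, substitute $K^b(z,0) = \mu_b((I-zL^*)^{-1})$ and $K^b(0,0) = \mu_b(I)$ from Proposition \ref{CPmap}, then use the scalar-style identity $(I+zL^*)(I-zL^*)^{-1} = 2(I-zL^*)^{-1} - I$ (valid because $(I+zL^*)$ and $(I-zL^*)^{-1}$ are polynomial and rational in the single element $zL^*$ and hence commute). Uniqueness is then essentially formal: by the density result from \cite{Jur2014AC} recalled earlier, $\bigvee_{z\in\B^d}(I-zL^*)^{-1} = \mc{S}$ in norm, so (HRF) determines $\mu_b$ on a norm-dense subset of $\mc{S}$; because $\mu_b$ is bounded, this extends uniquely to all of $\mc{S}$, and hermiticity $\mu_b(T^*) = \mu_b(T)^*$ pins it down on $\mc{S}^*$.

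For bijectivity, injectivity follows directly from (HRF): if $\mu_b = \mu_{b'}$ then $H_b - H_{b'}$ is a constant element of $i\L(\H)_{\mathrm{sa}}$, which is exactly the ambiguity allowed in the bijection $b\leftrightarrow H_b$. For surjectivity, given $\mu\in CP(\mc{S},\H)$ I would define $H(z):=\mu\bigl((I+zL^*)(I-zL^*)^{-1}\bigr)$ and show it is a pure Herglotz-Schur function. The key identity, parallel to the computation in Proposition \ref{CPmap}, is
$$\tfrac{1}{2}\,\frac{H(z)+H(w)^*}{1-zw^*} \;=\; \mu\bigl((I-zL^*)^{-1}(I-Lw^*)^{-1}\bigr),$$
so the Herglotz kernel of $H$ is $\mu$ applied to the $M_N(\mc{S}+\mc{S}^*)$-positive kernel $[(I-z_iL^*)^{-1}(I-Lz_j^*)^{-1}]$; complete positivity of $\mu$ then yields a positive $\L(\H)$-valued kernel. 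Because $\re H(z) \geq 0$ forces $\re(H(z)+I)\geq I$, we have $H+I$ automatically invertible, so $b:=b_H$ is a pure Schur function, and (HRF) together with uniqueness gives $\mu_b = \mu$.

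The main obstacle I anticipate is verifying that $[(I-z_iL^*)^{-1}(I-Lz_j^*)^{-1}]_{i,j=1}^N$ represents a positive element of $M_N(\mc{S}+\mc{S}^*)$ rather than only of $M_N(C^*(\mc{A}))$. This is precisely where the semi-Dirichlet inclusion $\mc{S}^*\mc{S}\subset\mc{S}+\mc{S}^*$ enters---it is the structural reason one must symmetrize and work with $\mc{S}+\mc{S}^*$ in the first place---and one must invoke it to conclude that complete positivity of $\mu$ on this subsystem (and not merely on an ambient $C^*$-algebra, where kernel positivity would be manifest) is already enough to exhibit $H$ as Herglotz-Schur.
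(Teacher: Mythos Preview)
Your approach is correct and coincides with the paper's: existence via Proposition~\ref{CPmap} and the displayed computation, uniqueness via norm-density of the Cauchy kernels in $\mc{S}$, and surjectivity by checking that $H(z):=\mu\bigl((I+zL^*)(I-zL^*)^{-1}\bigr)$ is Herglotz--Schur. The paper leaves the surjectivity verification implicit (``it is not hard to see''), so your kernel computation is a genuine addition of detail rather than a different method.

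Your identified ``obstacle,'' however, is not one. Positivity in a concrete operator system $\mc{T}\subset\L(\K)$ is \emph{defined} by $M_N(\mc{T})_+ := M_N(\mc{T})\cap M_N(\L(\K))_+$; there is no separate notion of positivity intrinsic to $\mc{S}+\mc{S}^*$ to check. The matrix $[(I-z_iL^*)^{-1}(I-Lz_j^*)^{-1}]$ factors as $C^*C$ with $C$ the row $[(I-Lz_1^*)^{-1},\ldots,(I-Lz_N^*)^{-1}]$, so it is manifestly positive in $M_N(\L(F^2_d))$. The semi-Dirichlet property $\mc{S}^*\mc{S}\subset\ov{\mc{S}+\mc{S}^*}$ is needed only to guarantee that each entry \emph{belongs} to $\mc{S}+\mc{S}^*$, so that $\mu^{(N)}$ may be applied; once membership is established, positivity is automatic. (For the algebraic identity underlying your kernel formula, note $(zL^*)(Lw^*)=zw^*I$ since $L_k^*L_j=\delta_{kj}I$, whence $(I-zL^*)^{-1}+(I-Lw^*)^{-1}-I=(1-zw^*)(I-zL^*)^{-1}(I-Lw^*)^{-1}$.)
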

Observe that $(I-zL^* ) ^{-1} (I+zL^*)$ is a direct analogue of the Herglotz integral kernel appearing in the classical Herglotz representation formula (\ref{classHerg}).
The above defines bijections between:
\bn
    \item purely contractive elements in the closed unit ball of $H^\infty _d \otimes \L (\H)$.
    \item the (purely contractive) Herglotz-Schur class of $\L (\H )$ valued functions $H$ on $\B ^d$.
    \item completely positive maps from the operator system $\mc{S} + \mc{S} ^*$ into $\L (\H )$ (modulo imaginary constants).
\en

\subsection{Noncommutative Fantappi\`{e} transform}

Given $\mu \in \mr{CP} ( \mc{S}  , \H  )$, we have by the previous section that $\mu = \mu _b$ for some
$b \in [ H^\infty _d \otimes \L (\H )] _1$.

We can use $\mu$ to construct a Stinespring-GNS type Hilbert space which (following \cite{Jur2014AC}) we will call $P^2 (\mu )$. This space will play the role
of the `closure of the analytic polynomials in the measure space $L^2 (\mu)$' in this several variable case.
First consider the algebraic tensor product $\mc{S} \otimes \H$ equipped with the sesquilinear form:
$$ \ip{ p \otimes h }{q \otimes g} _\mu := \ip{h}{\mu (p^* q) g}, $$ for any $p,q \in \mc{S}$ and any $h,g \in \H$. This is well-defined since the operator space $\mc{S}$ has the semi-Dirichlet property $\mc{S} ^* \mc{S} \subset \ov{ \mc{S}  + \mc{S} ^*} ^{\| \cdot \| }$ \cite{Jur2014AC}. As in the usual proof
of Stinespring's dilation theorem, the fact that $\mu$ is completely positive ensures that $\ip{\cdot}{\cdot}_\mu $ is a pre-inner product
on $\mc{S} \otimes \H$. If $N _\mu$ is the set of all elements $r \in \mc{S} \otimes \H$ which have zero length, $ \ip{r}{r} _\mu = 0$,
then $N _\mu$ is a subspace of $\mc{S} \otimes \H$ and $\ip{\cdot}{\cdot} _\mu$ defines an inner product on the quotient
$$ \frac{\mc{S} \otimes \H}{N_\mu}. $$ Let $P^2 (\mu )$ be the Hilbert space completion of this inner product space. We will
also use the notation $P^2 (b)$ for $P^2 (\mu )$ when $\mu = \mu _b$.

We will define unweighted and weighted versions of a Cauchy (Fantappi\'{e}) transform, implementing a unitary equlivalence between $P^2(b)$ and $\L(b), K(b)$ respectively.
For $z\in\B^d$ define the non-commutative Cauchy kernel
\begin{equation*}
  C_z(L): = (I -  Lz^* ) ^{-1} \in \mc{S}.
\end{equation*}
Then for any $h,g \in \H$,
\ba
  \ip{C_z(L)\otimes h}{C_w(L)\otimes g}_\mu & = & \ip{h}{\mu \left( (I -  zL^* ) ^{-1} (I- Lw^* ) ^{-1} \right)g}_\H \nn \\
  & = & \ip{h}{K^b (z,w) g} _\H \quad \quad \mbox{by Proposition \ref{CPmap}} \nn \\
  & = & \ip{ K^b _z h}{K^b _w g} _{\L (b)}. \nn \ea
Thus, the {\em Cauchy transform} on $P^2(\mu)$, defined by
\be \label{ncCT}
  (\mc{C}_b (p\otimes h)(z) := \mu(C_z(L)^*p(L))h,
\ee
is unitary from $P^2(b)$ onto $\L(b)$.

Since the multiplication $F(z)\mapsto (I-b(z))F(z) =U_b (z) ^{-1} F(z)$ is a unitary map from $\L(b)$ onto $K(b)$ (Lemma \ref{ontoisomult}), we obtain the {\em weighted Cauchy transform} by composition
\begin{equation*}
  p\otimes h \mapsto (I-b(z)) \mu(C_z(L)^*p(L))h.
\end{equation*}
This defines a unitary from $P^2(b)$ onto $K(b)$.

\begin{thm}{ (Noncommutative Fantappi\`{e} transform)}
Given any $CP$ map $\mu = \mu _b : \mc{S} +\mc{S} ^* \rightarrow \L (\H)$, the formula
\be \left( \mc{F} _b (p \otimes h) \right) (z)  = (I-b(z)) \mu \left( (I-L^*z)^{-1} p(L) \right) h, \label{fantap} \ee
defines a unitary transformation of $P^2 (b )$ onto $K (b)$.
\end{thm}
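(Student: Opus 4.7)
My plan is to factor $\mc{F}_b$ as the composition of the unweighted Cauchy transform $\mc{C}_b : P^2(b) \to \L(b)$ and the multiplication operator $M_{I-b} : \L(b) \to K(b)$, then show each factor is a surjective isometry. The multiplier factor is already handled: Lemma \ref{ontoisomult} asserts that $U_b = M_{(I-b)^{-1}} : K(b) \to \L(b)$ is an onto isometry, so its inverse $M_{I-b}$ is an onto isometry from $\L(b)$ to $K(b)$. Thus everything reduces to proving that $\mc{C}_b$, defined on elementary tensors by $\mc{C}_b(p\otimes h)(z) = \mu(C_z(L)^* p(L))h$, extends to a unitary from $P^2(b)$ onto $\L(b)$.

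For this, I would first work on the dense subset of $\mc{S}\otimes\H$ spanned by Cauchy kernel tensors $C_w(L)\otimes g$. The computation displayed just before (\ref{ncCT}) shows
\[ \ip{C_z(L)\otimes h}{C_w(L)\otimes g}_\mu = \ip{K^b_z h}{K^b_w g}_{\L(b)}, \]
by direct application of Proposition \ref{CPmap}. This identity shows three things simultaneously: (i) a Cauchy kernel tensor $C_w(L)\otimes g$ lies in $N_\mu$ iff it is mapped to $0$, so the map descends to the quotient $(\mc{S}\otimes\H)/N_\mu$; (ii) it is isometric on the linear span of such tensors; (iii) its range is the linear span of point-evaluation vectors $K^b_w g$, which is dense in $\L(b)$ by the defining property of a vector-valued RKHS.

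Next I need density of the Cauchy kernel tensors in $P^2(b)$. Since each $C_w(L) = (I-Lw^*)^{-1} = \sum_{\n\in\N^d} L^\n (w^*)^\n$ converges in norm in $\mc{S}$ for $w\in\B^d$, and conversely the symmetrized monomials $L^\n$ can be recovered from the analytic family $w\mapsto C_w(L)$ by differentiation/power-series extraction, the closed span of $\{C_w(L)\otimes g : w\in\B^d,\ g\in\H\}$ in $P^2(b)$ contains all $L^\n \otimes g$; since $\mc{S}\otimes\H$ is spanned by these, the Cauchy kernel tensors are total. Combined with the isometry and density of the range, the map extends uniquely to a unitary $\mc{C}_b : P^2(b) \to \L(b)$.

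Finally, composing with $M_{I-b}$ gives $\mc{F}_b = M_{I-b}\circ \mc{C}_b$, a unitary from $P^2(b)$ onto $K(b)$, matching the formula (\ref{fantap}). The only step requiring any care is the density argument for the Cauchy kernel tensors in $P^2(b)$, since the definition of $P^2(b)$ uses the quotient by $N_\mu$, but this is routine once one notes that the map $w\mapsto C_w(L)\otimes g$ is analytic into $\mc{S}\otimes\H$ and its Taylor coefficients reproduce the generating set $\{L^\n \otimes g\}$ of $\mc{S}\otimes\H$.
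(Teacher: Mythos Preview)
Your proposal is correct and follows exactly the paper's approach: factor $\mc{F}_b$ as $M_{I-b}\circ\mc{C}_b$, use Proposition~\ref{CPmap} to verify that $\mc{C}_b$ is isometric on Cauchy kernel tensors with dense range (the kernel inner-product identity displayed just before~(\ref{ncCT})), and then invoke Lemma~\ref{ontoisomult} for the multiplier factor. Your density argument for the Cauchy kernel tensors in $P^2(b)$ is in fact more explicit than what the paper provides, which simply asserts unitarity after the kernel computation.
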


Comparison to Theorem \ref{classCT} shows this is a natural generalization of the single variable fact. For this reason we view $P^2 (b)$ as the several-variable analogue of the `closure of the analytic polynomials'. Under the unitary transformation induced by $\mc{C} _b$, the partial isometry $V:\L(b) \otimes \C ^d \to \L (b)$ is conjugate to the map $\hat{V}:P^2(\mu) \otimes \C ^d \to P^2(\mu)$ defined by
\be  \hat {V}( w^*(I-w^*L)^{-1}\otimes h + N_\mu) = w^*L(I-w^*L)^{-1}\otimes h +N_\mu, \label{hatV} \ee
and extended to be $0$ on $\mc{C} ^* \ker{V}$.

\section{Extensions to the Cuntz-Toeplitz operator system}\label{extensions}

Let $\mu : \mc{S} + \mc{S} ^* \rightarrow \L (\H )$ be a CP map. Recall that we use the notation $CP (\mc{S} , \H )$ for the set of all completely positive maps from $\mc{S} + \mc{S} ^*$ into $\L (\H )$ and
$CP (\mc{A} , \H )$ for all CP maps on $\mc{A} + \mc{A} ^*$ into $\L (\H )$. Further recall that any such CP map $\mu \in CP (\mc{S} , \H )$ is equal to $\mu _b$ for a unique $b $ in the operator-valued Schur class of $H^2 _d \otimes \H$.

It will be convenient to briefly review the Stinespring dilation of a CP map $\phi : \mc{A} + \mc{A} ^* \rightarrow \L (\H)$ to a unital completely isometric isomorphism of the operator algebra $\mc{A}$ into $\L (\H )$. First, the Stinespring GNS Hilbert space $Q^2 (\phi )$ is constructed in the same way that we constructed $P^2 (\mu )$. As before, the construction of $Q^2 (\phi )$ relies on the semi-Dirichlet property of $\mc{A}$: $\mc{A} ^* \mc{A} \subset \ov{\mc{A} + \mc{A} ^*} ^{\| \cdot \|}$.

Consider the algebraic tensor product $\mc{A} \otimes \H$, and the
sesquilinear form $\ip{\cdot}{\cdot} _\phi$ on $\mc{A} \otimes \H$
defined on elementary tensors by
$$ \ip{a\otimes h}{b \otimes g} _\phi := \ip{h}{\phi (a^* b) g}
_\H. $$ and extend linearly. Again, the facts that $\phi$ is CP and $\mc{A}$ has the semi-Dirichlet property implies that this is a well-defined pre-inner product obeying
the Cauchy-Schwarz inequality. Taking the quotient of $\mc{A} \otimes \H$ by the subspace
$$ N_\phi := \{ x\in \mc{A}\otimes \H | \ \ip{x}{x} _\phi = 0 \}, $$ yields an inner product space whose completion is denoted by $Q^2 (\phi )$.

\begin{remark}
Since $\phi$ extends $\mu$, the map $p \otimes h + N_\mu \mapsto p \otimes h + N _\phi$ from $P^2 (\mu )$ into $Q^2 (\phi )$ is a well-defined isometry, and hence one can view $P^2 (\mu )$ as a subspace of $Q^2 (\phi )$.  We will often identify $P^2 (\mu )$ with its image under this isometry in $Q^2 (\phi )$,
    and we will sometimes write $P^2 (\phi )$ for the embedding of $P^2 (\mu )$ in $Q^2 (\phi )$.
\end{remark}

One can construct a Stinespring dilation of $\phi$, $\pi _\phi : \mc{A} \rightarrow \L (Q^2 (\phi))$ as in the usual proof of Stinespring's theorem.
Namely, for any $a \in \mc{A}$, let $\pi _\phi (a) \in \L (Q^2 (\phi ))$ be defined by left multiplication:
\ba \pi _\phi (a) (b \otimes h + N_\phi) & := & (L_a \otimes I) (b \otimes h + N_\phi) \nn \\
 & := & ab \otimes h + N_\phi. \nn \ea It is easy to check this is a well-defined, contractive and unital linear map. Repeating the construction of $Q^2 (\phi )$ for the matrix operator algebras $\mc{A} \otimes \C ^{k \times k }$,
$$Q^2 (\phi) \otimes \C ^k \simeq \left[ \frac{ \left( \mc{A} \otimes \C ^{k \times k} \right) \otimes \left( \H \otimes \C ^k \right) }{ N _{\phi ^{(k)} }} \right], $$
where $\phi ^{(k)} : \mc{A} \otimes \C ^{k \times k} + \mc{A} ^* \otimes \C ^{k\times k} \rightarrow \L (\H \otimes \C ^k )$ is the $k$-fold ampliation of $\phi$, shows that $\pi _\phi$ is a completely contractive unital homomorphism of $\mc{A}$ into $\L (Q^2 (\phi ) )$. The square brackets in the above formula denote completion. Recall here that for $[a_{ij}] \in \mc{A} \otimes \C ^{k\times k}$ one defines the ampliation $\phi ^{(k)}$ by
$\phi ^{(k)} \left( [a_{ij} ] \right) = [ \phi (a_{ij} ) ]$. The map $\pi _\phi$ is a dilation of $\phi$: if $ [I \otimes ]_\phi : \H \rightarrow Q^2 (\phi ), $ is the linear map defined by
$ [I\otimes ]_\phi h  = I \otimes h + N _\phi, $ then $$ \phi (a) = [I\otimes ]_\phi ^* \pi _\phi (a) [I\otimes ]_\phi; \quad \quad a \in \mc{A},$$ and $[I\otimes ]_\phi $ has norm
$$ \| [I\otimes ] _\phi \| ^2 = \| \phi ( I ) \|. $$ The bounded linear map $[I\otimes ]_\phi$ is an isometry if and only if $\phi$ is unital (if and only if $\mu$ is unital).

\begin{remark} \label{SSstarextend}
    By definition, $\pi _\phi (L _k ) ^* \pi _\phi (L _j ) = \delta _{kj} I$ so that $\pi _\phi (L)$ is a row-isometry, and Lemma \ref{starextend} implies that
$\pi _\phi : \mc{A} \rightarrow \L (Q^2 (\phi ) )$ is a completely isometric unital homomorphism which obeys $\pi _\phi (a^* c ) = \pi _\phi (a) ^* \pi _\phi (c)$
for all $a,c \in \mc{A}$, and is $*$-extendible to a representation $\Pi _\phi$ of the Cuntz-Toeplitz $C^*$-algebra $\mc{E} = C^* (\mc{A} )$.
\end{remark}

\subsection{Tight extensions}

Given any CP $\mu : \mc{S} + \mc{S} ^* \rightarrow \L (\H )$ as above, let $\phi : \mc{A} + \mc{A} ^* \rightarrow \L (\H )$ be a CP extension of $\mu$. In this section
we will show that one can construct an extension $\phi$ of $\mu$ which is \emph{tight} in the sense of \cite{Jur2014AC}.

Recall that $P^2 (\mu )$ can be viewed as a subspace $P^2 (\phi)$ of $Q^2 (\phi )$. Let $P$ denote the orthogonal projection onto this subspace.
Also define the spaces
$$P^2 _0 (\phi ) := \bigvee _{\n \neq 0} L^{\n } \otimes h \subset P^2 (\phi ), $$ with projection $P_0$ and
$$Q^2 _0 (\phi ) := \bigvee _{\alpha \neq \emptyset} L^\alpha \otimes h \subset Q^2 (\phi ), $$ with orthogonal projection $Q_0$ and note that
$P^2 _0 (\phi) \subset Q^2 _0 (\phi)$ so that $P_0 \leq Q_0$. The space $P^2 _0 (\phi )$ is our analogue of the closed linear span of the non-constant analytic monomials in the single variable theory. Also note that $Q^2 _0 (\phi )$ is invariant for $\pi _\phi (L) =: R$.

Define the $d$-contraction $S$ on $P^2 _0 (\phi )$ by compression of $\pi _\phi (L)$:
$$ S = P_0 \pi _\phi (L) P _0 = P_0 R P_0. $$ Similarly let
$$ T := \pi _\phi (L) | _{Q^2 _0 (\phi )} = R |_{Q^2 _0 (\phi )}, $$ this is a $d$-isometry using that $Q^2 _0 (\phi )$ is invariant for $\pi _\phi (L)$.

\begin{defn}
The extension $\phi$ of $\mu$ is \emph{tight} if $T$ is a dilation of $S$ \cite[Definition 3.2]{Jur2014AC}.
\end{defn}

Since $P_0 \leq Q_0$, it will follow that $\phi$ is tight if we can show that $P_0$ is semi-invariant for $\pi _\phi$ \cite{Sar1965semi}, that is, if for any $k,j \in \{ 1, ... , d \}$,
\ba P_0 R_k P_0 R_j P_0 & = & P_0 \pi _\phi (L_k) P_0 \pi _\phi (L_j) P_0 \nonumber \\
& = & P_0 \pi _\phi (L_k L_j ) P_0 = P_0 R_k R_j P_0. \nonumber \ea

We now define a natural CP extension $\nu : \mc{A} + \mc{A} ^* \rightarrow \L (\H )$ of $\mu$: By Theorem \ref{HRF} there is a unique $b \in [ H^\infty _d \otimes \L (\H ) ] _1$ such that $\mu = \mu _b$. Recall that by Proposition \ref{CPmap},
\ba \mu _b \left( (I-zL^*)^{-1} (I-Lw^* ) ^{-1} \right) & = & (K_0 ^b )^* (I-zV^* ) ^{-1} (I-Vw^* ) ^{-1} K_0 ^b \nn \\
& = & K^b (z,w), \nn \ea and
$$ \mu _b (L ^{\n} ) = (K_0 ^b )^* V^{\n} K_0 ^b; \quad \quad \n \in \N ^d. $$
\begin{defn} \label{tightcand}
    Define $\nu \in CP (\mc{A} , \H )$ by $$ \nu ( L ^\alpha ) := K_0 ^* V^\alpha K_0; \quad \quad \alpha \in \F ^d. $$
\end{defn}
We will show that this $\nu$ is always the (unique) tight extension of $\mu$. If $\mu = \mu _b$ we will often write $\nu _b$ for the (tight) extension defined above. Similarly we will often write $Q^2 (b)$ for $Q^2 (\nu )$ and $\pi _b$ for
$\pi _{\nu}$.

\begin{remark} \label{cyclicspace}
As before, let $(W, \K _b )$ be the minimal isometric dilation of $V=V^b$ on $\K _b \supset \L (b)$. The space
$$ \K  = \bigvee _{\alpha \in \F ^d} W^\alpha K_0 h, $$ is equal to $\K _b$ since Lemma \ref{RKext} implies the right hand side contains $\L (b)$, and $\L (b)$ is cyclic for the minimal isometric dilation of $V$.
\end{remark}

\begin{prop}{ (Extended Cauchy Transform)} \label{extendedCT}
    Consider the linear map $\mc{C} _b : Q^2 (b) \rightarrow \K = \K _b$ defined by $$ \mc{C} _b (L^\alpha \otimes h ) = W^\alpha K_0 h.$$ Then $\mc{C} _b$ is unitary and
intertwines $ R = \pi _b (L)$; that is, $ \mc{C} _b \pi _b (L) =  W \mc{C} _b$.

The restriction of $\mc{C} _b$ to $P^2 (b)$ is the Cauchy transform onto $\L (b)$ from Section \ref{Herglotz}, $P^2 (b ) $ is co-invariant for $R = \pi _b (L) $ and  $R^* | _{P^2 (b)} = \hat{V} ^* =  \mc{C} _b ^* V^* \mc{C} _b$ is
a partial isometry.
\end{prop}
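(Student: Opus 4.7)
The plan is to verify that $\mc{C}_b$ is a well-defined surjective isometry on a dense subspace of elementary tensors, and then to transfer the remaining structural statements through $\mc{C}_b$ using the defining property of the minimal isometric dilation.

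First I would pin down what $\nu_b$ does on the full operator system $\mc{A} + \mc{A}^*$. By Definition \ref{tightcand}, $\nu_b$ is obtained by applying Popescu's completely positive functional calculus to the $d$-contraction $V = V^b$ and then compressing by $K_0$. Its canonical Stinespring dilation is therefore the minimal isometric dilation $W$ together with the $\ast$-representation $\Pi_W$ of the Cuntz-Toeplitz $C^*$-algebra $\mc{E}$ from Lemma \ref{starextend}, yielding
$$\nu_b(a) \;=\; K_0^*\, \Pi_W(a)\, K_0, \qquad a \in \mc{A} + \mc{A}^*.$$
Specialising to $a = (L^\alpha)^* L^\beta$ gives $\nu_b((L^\alpha)^* L^\beta) = K_0^* (W^\alpha)^* W^\beta K_0$, and hence
$$\langle L^\alpha \otimes h,\, L^\beta \otimes g \rangle_{\nu_b} \;=\; \langle W^\alpha K_0 h,\, W^\beta K_0 g \rangle_{\K_b}.$$
This single identity simultaneously shows that $\mc{C}_b$ passes to a well-defined linear map on the quotient $\mc{A} \otimes \H / N_{\nu_b}$ and is isometric there; extending by continuity produces an isometry $\mc{C}_b : Q^2(b) \to \K_b$. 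Surjectivity onto $\K_b$ is immediate from Remark \ref{cyclicspace}, so $\mc{C}_b$ is unitary. The intertwining $\mc{C}_b\, \pi_b(L_k) = W_k\, \mc{C}_b$ is then just $W_k W^\alpha K_0 h = W^{k\alpha} K_0 h$ read off elementary tensors.

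Next I would identify the restriction of $\mc{C}_b$ to $P^2(b)$. Under the embedding $P^2(b) \hookrightarrow Q^2(b)$, a dense set is spanned by $L^{\mbf{n}} \otimes h$ with $\mbf{n} \in \N^d$. Using the norm-convergent expansion $(I - L z^*)^{-1} = \sum_{\mbf{n}} L^{\mbf{n}} (z^*)^{\mbf{n}}$ and Lemma \ref{RKext} applied to the $d$-contractive extension $W \supseteq V$,
$$\mc{C}_b\bigl(C_z(L) \otimes h\bigr) \;=\; (I - Wz^*)^{-1} K_0 h \;=\; K^b_z h,$$
so the restriction of $\mc{C}_b$ to $P^2(b)$ agrees with the Cauchy transform of Section \ref{Herglotz} and maps onto $\L(b) = \bigvee_{z,h} K^b_z h$.

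The remaining assertions are then consequences of the defining property $W^*|_{\L(b)} = V^*$ of the minimal isometric dilation, which makes $\L(b)$ co-invariant for each $W_k$. Transferring through $\mc{C}_b$ gives that $P^2(b)$ is co-invariant for $R = \pi_b(L)$, and
$$R^*|_{P^2(b)} \;=\; \mc{C}_b^*\, W^*|_{\L(b)}\, \mc{C}_b \;=\; \mc{C}_b^* V^* \mc{C}_b \;=\; \hat{V}^*,$$
the last equality by the definition of $\hat{V}$ in \eqref{hatV}. Since $V$ is a partial isometry so is $V^*$, whence $\hat{V}^* = R^*|_{P^2(b)}$ is a partial isometry as well. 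The main obstacle is the first step: one needs the identification $\nu_b = K_0^*\, \Pi_W(\cdot)\, K_0$ on all of $\mc{A} + \mc{A}^*$ (not merely on $\mc{A}$), since this is what forces the inner-product identity on elementary tensors to match exactly, and hence what makes $\mc{C}_b$ unitary rather than merely contractive.
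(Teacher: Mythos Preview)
Your proof is correct and follows essentially the same route as the paper: both establish the isometry of $\mc{C}_b$ via the identity $\nu_b\bigl((L^\alpha)^* L^\beta\bigr) = K_0^* (W^\alpha)^* W^\beta K_0$, then read off the intertwining on elementary tensors, and finally transport co-invariance of $\L(b)$ for $W$ back to $P^2(b)$ through $\mc{C}_b$. The only cosmetic difference is that you justify this key identity by packaging $\nu_b$ as the compression $K_0^*\,\Pi_W(\cdot)\,K_0$ of the $\ast$-representation of Lemma~\ref{starextend}, whereas the paper simply asserts it with the phrase ``since $W$ dilates $\pi_b$''; your formulation makes explicit what that phrase is hiding, and also correctly flags that it is the only nontrivial step.
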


\begin{proof}
    This is a straightforward computation: If $L^\alpha \otimes h$, $L^\beta \otimes g$ belong to $Q^2 (\nu )$ then
\ba \ip{ L^\alpha \otimes h}{L^\beta \otimes g} _\nu & = & \ip{h}{\nu \left( (L ^\alpha ) ^*  L ^ \beta \right) g } _\H \nonumber \\
& = & \ip{W^\alpha K_0 h }{W^\beta K_0 g} _\K. \nonumber \ea
since $W$ dilates $\pi_b$.
The intertwining relation is also easily verified:
$$ \mc{C} \pi _b ( L^\gamma ) ( L^\alpha \otimes h ) = \mc{C} ( L ^{\gamma + \alpha } \otimes h )  = W ^\gamma \mc{C}  (L^\alpha \otimes h). $$
The map $\mc{C}$ is onto $\K = \K _b$ by Remark \ref{cyclicspace}, and it is clear that its restriction to $P^2 (b)$ is the Cauchy transform onto $\L (b)$ defined in equation (\ref{ncCT}).

Since $\L (b)$ is co-invariant for $W$, the minimal isometric dilation of $V$, the intertwining relation shows $P^2 (b)$ is co-invariant for $R=\pi _b (L)$. By the intertwining relationship, the fact that $W^* | _{\L (b)} = V^*$, and co-invariance, $R^* | _{P^2 (b)} = \mc{C} ^* W^* \mc{C} | _{P^2 (b) } = \mc{C} ^* V^* \mc{C} =\hat{V} ^*,$ as defined in equation (\ref{hatV}).
\end{proof}

The above result also shows that $P_b = \mc{C} P \mc{C} ^*$, where $P_b$ projects the space
$\K = \K ^b$ of the minimal isometric dilation $W$ onto $\L (b)$ so that $P : Q^2 (b) \rightarrow P^2 (b)$ is a co-invariant projection for $\pi _b (L)$.

The next lemma records the relationships between the projections $P_0$ and $Q_0$ in $P^2(b)\subset Q^2(b)$.
\begin{lemma} \label{intersect}
With notations as above,
\begin{itemize}
\item[i)] $Q_0=RR^*$,
\item[ii)] $P_0 = \mc C_b^* VV^* \mc C_b$,
\item[iii)] $P_0=Q_0P=PQ_0$.
\end{itemize}
\end{lemma}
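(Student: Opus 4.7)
The plan is to establish (i) and (ii) by direct identification, and then to deduce (iii) by transferring the relevant projections in $Q^2(b)$ through the extended Cauchy transform $\mc{C}_b : Q^2(b) \rightarrow \K_b$ of Proposition \ref{extendedCT} and comparing them with operators on the dilation space $\K_b$.

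For (i), I would use that by Remark \ref{SSstarextend}, $R := \pi_b(L)$ is a row isometry on $Q^2(b)$, so $RR^* = \sum_{k=1}^d R_k R_k^*$ is the orthogonal projection onto $\ran{R}$. From the definition of $\pi_b$ as left multiplication, $\ran{R}$ is the closed linear span of vectors $L_k L^\alpha \otimes h + N_\nu = L^{k\alpha} \otimes h + N_\nu$ with $k \in \{1,\dots,d\}$ and $\alpha \in \F^d$, i.e.\ the closed span of $L^\beta \otimes h + N_\nu$ with $\beta \neq \emptyset$. This is precisely $Q^2_0(b)$.

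For (ii), I would identify the image of $P^2_0(b)$ under $\mc{C}_b$. The norm-convergent expansion $C_w(L) - I = \sum_{|\n| \geq 1} (w^*)^\n L^\n$ in $\mc{S}$ shows that $(C_w(L)-I) \otimes h$ lies in $P^2_0(b)$ for every $w \in \B^d$ and $h \in \H$, and a standard Taylor expansion (reading off coefficients of $(w^*)^\n$) shows that such vectors span $P^2_0(b)$. Applying Proposition \ref{CPmap} together with Lemma \ref{RKext} gives $\mc{C}_b((C_w(L)-I) \otimes h) = K_w h - K_0 h = (K_w - K_0)h$, and by the definition of $V$ the closed linear span of these vectors as $w, h$ vary is exactly $\ran{V}$. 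Hence $\mc{C}_b P_0 \mc{C}_b^*$ is the projection on $\K_b$ with range $\ran{V} \subset \L(b)$, which is precisely $VV^*$ extended by $0$ to $\K_b$.

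For (iii), combine (i) with the intertwining $\mc{C}_b R = W \mc{C}_b$ of Proposition \ref{extendedCT} to obtain $\mc{C}_b Q_0 \mc{C}_b^* = WW^*$ on $\K_b$, and recall the already-noted identity $\mc{C}_b P \mc{C}_b^* = P_b$. Thus $\mc{C}_b Q_0 P \mc{C}_b^* = WW^* P_b$. For any $f \in \L(b)$, Lemma \ref{contractext} yields $W^* f = V^* f$, and since $V^* f \in \ran{V^*} = \dom{\check{V}}$, on which $W$ agrees with $\check{V}$, we get $W V^* f = V V^* f$. Therefore $WW^* P_b = VV^* P_b$, which by (ii) is $\mc{C}_b P_0 \mc{C}_b^*$, yielding $Q_0 P = P_0$. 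Taking adjoints and using that $P_0$ is self-adjoint gives $P Q_0 = P_0$ as well, and in particular $P$ and $Q_0$ commute. The main obstacle is the bookkeeping in (iii): $W$ only extends $V$ on the initial space $\dom{\check V}$, so one must exploit $\ran{V^*} = \dom{\check V}$ to replace $W V^* f$ by $V V^* f$; everything else is a direct calculation with the Cauchy transform.
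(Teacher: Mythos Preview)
Your proof is correct and uses the same essential ingredients as the paper—the extended Cauchy transform intertwining, the identity $WW^* P_b = VV^* P_b$ obtained from $W^*|_{\L(b)} = V^*$ together with $W \supseteq V$, and the identification of $\ran{V}$ with the image of $P^2_0(b)$ under $\mc{C}_b$—but organizes them differently. The paper proves (ii) and (iii) simultaneously: it first computes $\wt{Q}_0 P_b = WW^* P_b = VV^*$ to show that $Q_0 P$ is a self-adjoint projection, invokes the von Neumann alternating projection formula to identify its range as $Q^2_0(b) \cap P^2(b)$, and then verifies $P_0 = Q_0 P$ via the two inclusions $P_0 \leq Q_0 P$ (trivial) and $Q_0 P \leq P_0$ (by the same power-series expansion $(K_z - K_0)h = \sum_{k\geq 1}(Wz^*)^k K_0 h$ you implicitly use). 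Item (ii) then drops out of the computation $\wt{Q}_0 P_b = VV^*$. Your route is more direct: you establish (ii) first by matching the spanning set $\{(C_w(L)-I)\otimes h\}$ of $P^2_0(b)$ with the spanning set $\{(K_w-K_0)h\}$ of $\ran{V}$, and then (iii) follows immediately from (i), (ii), and the single calculation $WW^* P_b = VV^* P_b$. This bypasses both the von Neumann formula and the two-sided inequality, giving a slightly cleaner argument. One small attributional point: the identity $W^* f = V^* f$ for $f \in \L(b)$ is the defining property of the minimal isometric dilation (stated just before Lemma~\ref{contractext}) rather than a consequence of that lemma; Lemma~\ref{contractext} is, however, exactly what justifies the subsequent step $W V^* f = V V^* f$.
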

\begin{proof}
    First $\ran{Q_0} = Q^2 _0 (b ) := \bigvee _{\alpha \neq \emptyset} L^\alpha \otimes \H = \ran{ \pi _b (L) } = \ran{R}$. (Actually this holds not only for $Q^2(b):=Q^2(\nu)$, but for any $Q^2(\phi)$ where $\phi$ is a CP extension of $\mu$. ) Since $R$ is an isometry, we conclude $Q_0=RR^*$. This proves (i).

Items (ii) and (iii) will be proven simultaneously. Consider first $\wt{Q} _0 := \mc{C} Q_0 \mc{C}^* = \mc{C} RR^* \mc{C} = W W ^*$, the projection onto $\ran{W}$. Then,
\ba  \wt{Q_0} P_b & = & W W ^* P_b = W V^* P_b \nonumber \\
& = & W (V^*V) V^* P_b = V (V^* V ) V^* P_b \nonumber \\
& = & V V^* P_b = VV^*. \nonumber \ea In the above, the identity $W (V^* V) = V (V^*V)$ holds since
$W^*$ is a contractive extension of $V^*$ (and hence $W$ is a contractive extension of the partial $d-$isometry $V$, see Lemma \ref{contractext}).
This shows that $\wt{Q} _0 P_b $ and hence $Q_0 P$ is a projection.
By the von Neumann alternating projection formula $Q_0P$ is the projection onto $Q^2 _0 (b) \cap P^2 (b)$. Now clearly
$P^2 _0 (b)  \subset  Q^2 _0 (b) \cap P^2 (b )$ so that $P_0 \leq Q_0 P$. Conversely $Q_0 P = \mc{C}_b ^* V V ^* \mc{C}_b = \hat{V} \hat{V} ^*$ is unitarily equivalent to
$V V ^*$, the projection onto $\ran{V} = \bigvee _{z \in \B ^d} (K_z - K_0) \H.$ For any $h \in \H$, Lemma \ref{RKext} implies
\ba (K_z - K_0) h & = & \left( (I- V z^* ) ^{-1} - I  \right) K_0 h \nonumber \\
& =  & \left( (I-Wz^* )^{-1} - I \right) K_0 h \nonumber \\
& =& \sum _{k=1} ^\infty (Wz^*) ^k K_0 h. \nonumber \ea
Since $Q_0 P$ projects onto $\mc{C}_b ^* \ran{V}$ and $$ \mc{C}_b^* \sum _{k=1} ^\infty (Wz^*) ^k K_0 h  =
\sum _{k=1} ^\infty (Lz^*) ^k \otimes h \in P^2 _0 (\nu), $$ it follows that $Q_0 P \leq P_0$. We conclude that $P_0 = Q_0 P$.
\end{proof}

\begin{cor}
The projection $P_0$ of $ Q^2 (b)$ onto $P^2 _0 (b)$ is semi-invariant for $R =  \pi _b (L)$ and $\nu =\nu _b \in CP (\mc{A} , \H )$
is a tight extension of $\mu = \mu _b \in CP (\mc{S} , \H )$.
\end{cor}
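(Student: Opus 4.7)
The strategy is to establish semi-invariance of $P_0$ for $R = \pi_b(L)$ by exhibiting $P^2_0(b)$ as a ``sandwich'' of two $R$-invariant subspaces, from which tightness follows via the multivariable Sarason-type argument already flagged in the paragraph preceding the corollary. My plan is to combine the identifications $P_0 = Q_0 P = PQ_0$ from Lemma \ref{intersect} with the co-invariance of $P^2(b)$ established in Proposition \ref{extendedCT}.

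First I would set $N_1 := Q^2_0(b) = \ran{Q_0}$. Since each $R_k$ sends words of length $\geq 1$ to words of length $\geq 2$, $N_1$ is invariant under each component $R_k$ of $R$. By Proposition \ref{extendedCT}, $P^2(b)$ is co-invariant for $R$, equivalently $P^2(b)^\perp$ is $R$-invariant. Setting $N_2 := N_1 \cap P^2(b)^\perp$, the intersection of two $R$-invariant subspaces is again $R$-invariant. Because $P$ and $Q_0$ commute (Lemma \ref{intersect}(iii)), $P_0 = Q_0 P$ projects onto $\ran{Q_0} \cap \ran{P}$, and a quick application of the von Neumann alternating projection formula (already used in the proof of Lemma \ref{intersect}) gives $\ran{P_0} = N_1 \ominus N_2$. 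Thus $P^2_0(b)$ is sandwiched between the two $R$-invariant subspaces $N_2 \subseteq N_1$.

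With respect to the orthogonal decomposition $Q^2(b) = N_2 \oplus P^2_0(b) \oplus N_1^\perp$, each $R_k$ becomes block upper-triangular. A direct computation -- the multivariable analogue of Sarason's classical semi-invariance argument \cite{Sar1965semi}, which extends verbatim because block-triangularity is preserved under arbitrary products -- then yields
\[
    P_0 R^\alpha P_0 \;=\; (P_0 R_{\alpha_1} P_0)(P_0 R_{\alpha_2} P_0)\cdots (P_0 R_{\alpha_n} P_0)
\]
for every word $\alpha = \alpha_1 \cdots \alpha_n \in \F^d$. Specialized to $|\alpha|=2$ this recovers the criterion $P_0 R_k R_j P_0 = P_0 R_k P_0 R_j P_0$ noted before the corollary, confirming semi-invariance of $P_0$ for $R$.

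To finish, since $P_0 \leq Q_0$ and $T = R|_{Q^2_0(b)}$ is the restriction of $R$ to an invariant subspace, $P_0 T^\alpha P_0 = P_0 R^\alpha P_0 = S^\alpha$ for all $\alpha \in \F^d$, so $T$ dilates $S = P_0 R P_0$. By definition, $\nu = \nu_b$ is therefore a tight extension of $\mu = \mu_b$. The only non-bookkeeping step is the ``sandwich'' identification $\ran{P_0} = N_1 \ominus N_2$, and since Lemma \ref{intersect} and Proposition \ref{extendedCT} already supply the commutation $PQ_0 = Q_0 P$ together with the needed invariance and co-invariance, I expect no essential obstacle.
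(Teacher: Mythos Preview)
Your proof is correct and uses the same ingredients as the paper---the commutation $P_0 = PQ_0 = Q_0P$ from Lemma~\ref{intersect}, the $R$-invariance of $Q^2_0(b)$, and the co-invariance of $P^2(b)$ from Proposition~\ref{extendedCT}---but organizes them differently. Where you invoke Sarason's structural characterization (writing $P^2_0(b) = N_1 \ominus N_2$ as the orthogonal difference of two $R$-invariant subspaces and reading off semi-invariance from block-triangularity), the paper instead verifies the semi-invariance identity $P_0 R_k P_0 R_j P_0 = P_0 R_k R_j P_0$ by a direct three-line manipulation: substitute $P_0 = Q_0P = PQ_0$, use co-invariance of $P$ to drop the inner $P$, then use invariance of $Q_0$ (in fact $Q_0 R_j = R_j$ since $Q_0 = RR^*$) to drop the inner $Q_0$. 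Your route is slightly more conceptual and yields the full multiplicative property for arbitrary words in one stroke; the paper's route is a shorter bare-hands computation that avoids explicitly naming the invariant subspace $N_2$. Both finish tightness the same way.
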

\begin{proof}
  As before, let $R:= \pi _\nu (L)$. Then,
\ba P_0 R_k P_0 R_j P_0 & = & Q_0 P R_k PQ_0 R_j Q_0 P \nonumber \\
& =& Q_0 P R_k Q_0 R_j Q_0 P \quad \mbox{ (By co-invariance of P) } \nonumber \\
& =& Q_0 P R_k R_j Q_0 P \quad \mbox{(By invariance of $Q_0$)} \nonumber \\
& = & P_0 R_k R_j P_0. \nonumber \ea

To see that $T := \pi _\nu (L) Q_0 = R Q_0$ is a dilation of $S := P_0 R P_0$, apply the semi-invariance of $P_0$.
\ba S_k S_j & = & P_0 T_k P_0 T_j P_0 \nonumber \\
& = &  P_0 R_k P_0 R_j P_0 = P_0 R_k R_j P_0 \nonumber \\
& = &  P_0 R_k R_j Q_0 P_0 = P_0 R_k Q_0 R_j Q_0 P_0 \nonumber \\
& = & P_0 T_k T_j P_0. \ea This proves that $\nu$ is tight.
\end{proof}

\begin{remark}
    The tight extension of any $\mu \in CP (\mc{S} ,\H )$ is necessarily unique \cite[Theorem 3.5]{Jur2014AC}. (The result there is stated and proved only for scalar-valued $\mu$, but the proof works {\it mutatis mutandis} for general CP maps.)
\end{remark}

\begin{cor}
    If $\mu : \mc{S} + \mc{S} ^* \rightarrow \L (\H ) $ has a unique extension $\phi : \mc{A} +\mc{A^*} \rightarrow \L (\H )$ then $\phi = \nu _b$ is tight.
\end{cor}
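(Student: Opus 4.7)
The plan is to observe that this corollary follows immediately by chaining the two preceding results. First I would invoke the previous corollary, which established that $\nu_b \in CP(\mc{A},\H)$ (as defined in Definition \ref{tightcand}) is always a tight extension of $\mu = \mu_b$ from the symmetrized operator system $\mc{S} + \mc{S}^*$ to the full Cuntz-Toeplitz operator system $\mc{A} + \mc{A}^*$. In particular, $\nu_b$ is a CP extension of $\mu$ to $\mc{A} + \mc{A}^*$.

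Next I would use the hypothesis that $\mu$ has a \emph{unique} extension $\phi$ to $\mc{A} + \mc{A}^*$. Since both $\phi$ and $\nu_b$ are CP extensions of $\mu$, the uniqueness hypothesis forces $\phi = \nu_b$. Combining this identification with the tightness of $\nu_b$ from the first step yields that $\phi$ is tight, which is precisely the conclusion.

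There is essentially no obstacle here: the content of the corollary lies entirely in the two preceding results (construction of the tight extension $\nu_b$ and the characterization of its tightness via semi-invariance of $P_0$). The proof itself is a one-line logical deduction, and no new calculation or structural argument is required. If anything, the only point worth mentioning explicitly is that $\nu_b$ was defined as a \emph{specific} element of $CP(\mc{A},\H)$ extending $\mu_b$, so that it is legitimately a candidate for the unique $\phi$ posited by the hypothesis.
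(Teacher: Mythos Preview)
Your proposal is correct and matches the paper's approach: the corollary is stated without proof in the paper, being an immediate consequence of the preceding corollary (that $\nu_b$ is always a tight CP extension of $\mu_b$) together with the uniqueness hypothesis. Your one-line deduction is exactly what is intended.
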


\subsection{Extensions of $\mu$ and cyclic isometric extensions of $V$}

In this subsection we show that given $\mu \in CP (\mc{S} , \H )$, the set of all $\phi \in CP (\mc{A} , \H )$ extending $\mu$ is naturally parametrized
by equivalence classes of cyclic isometric extensions of the partial $d-$isometry $V =V^b$ acting on the Herglotz space $\L (b)$.

\begin{defn}
    Let $\Ext{V}$ be the set of all $d-$isometric extensions $D \supseteq V$ acting on a Hilbert space $\J \supset \L (b)$ so that $K_0 \H$ is cyclic for $D$.
Given $D_1 , D_2 \in \Ext{V}$, we say $D_1 \simeq _b D_2$ if $D_1, D_2$ are unitarily equivalent via an isometry which restricts to the identity on $\L (b)$.
\end{defn}

\begin{lemma} \label{phiD}
    Given any row contractive extension $D \supseteq V$ on $\J \supset \L (b)$, the $CP$ map $\phi _D \in CP (\mc{A}, \H )$ defined by
$ \phi _D (L^\alpha) := K_0 ^* D^\alpha K_0 $ is an extension of $\mu$.
\end{lemma}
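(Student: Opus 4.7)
The plan has three steps: (i) show $\phi_D$ is a well-defined CP map on all of $\mc{A} + \mc{A}^*$; (ii) reduce the claim that $\phi_D$ extends $\mu$ to checking the identity $K_0^* D^\n K_0 = K_0^* V^\n K_0$ for every $\n \in \N^d$; and (iii) establish this identity using Lemma \ref{RKext} twice.

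For (i), since $D$ is a row contraction on $\J$, Popescu's dilation theorem (in the form of \cite[Corollary 2.2]{Pop96disk}, as quoted in the proof of Proposition \ref{CPmap}) supplies a unital CP map $\Phi_D : \mc{A} + \mc{A}^* \to \L(\J)$ sending $L^\alpha$ to $D^\alpha$ for every $\alpha \in \F^d$. Compressing by $K_0 : \H \to \L(b) \subset \J$ then yields the CP map $\phi_D(\cdot) := K_0^* \Phi_D(\cdot) K_0 \in CP(\mc{A}, \H)$, which by construction satisfies $\phi_D(L^\alpha) = K_0^* D^\alpha K_0$ on free monomials and hence on the whole operator system by norm continuity.

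For (ii), the operator subsystem $\mc{S} + \mc{S}^*$ is spanned (in norm) by the symmetrized monomials $L^\n$ and $(L^\n)^*$. Since both $\phi_D$ and $\mu$ are hermitian (being CP), it is enough to check $\phi_D(L^\n) = \mu(L^\n)$ for every $\n \in \N^d$. By linearity $\phi_D(L^\n) = \sum_{\la(\alpha) = \n} K_0^* D^\alpha K_0 = K_0^* D^\n K_0$, and by Proposition \ref{CPmap} $\mu(L^\n) = K_0^* V^\n K_0$. So the task reduces to showing $K_0^* V^\n K_0 = K_0^* D^\n K_0$ for all $\n$, and in fact it will suffice to show the stronger vector identity $V^\n K_0 h = D^\n K_0 h$ in $\J$ for every $h \in \H$ and $\n \in \N^d$.

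For (iii) — the substantive step — apply Lemma \ref{RKext} to both $V$ (which trivially extends itself) and to the given extension $D$: for every $z \in \B^d$ and $h \in \H$,
\[
K_z h \;=\; (I - z^* V)^{-1} K_0 h \;=\; (I - z^* D)^{-1} K_0 h,
\]
where both series converge in $\J$ because $V$ and $D$ are $d$-contractions and $\|z\| < 1$. Expanding the Neumann series, since the scalars $\bar z_j$ commute,
\[
\sum_{\n \in \N^d} \bar z^{\,\n}\, V^\n K_0 h \;=\; \sum_{\n \in \N^d} \bar z^{\,\n}\, D^\n K_0 h \qquad (z \in \B^d).
\]
The monomials $\bar z^{\,\n}$ are linearly independent as functions of $z \in \B^d$ (equivalently, Taylor coefficients of a $\J$-valued anti-holomorphic function are unique), so term-by-term comparison yields $V^\n K_0 h = D^\n K_0 h$ in $\J$ for every $\n$ and $h$. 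Sandwiching with $K_0^*$ completes the argument. The only point requiring care is the legitimacy of reading off coefficients from a vector-valued power series on $\B^d$, which is standard but should be noted explicitly.
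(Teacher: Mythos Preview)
Your proof is correct and rests on the same key ingredient as the paper's, namely Lemma \ref{RKext} applied to both $V$ and $D$. The difference is organizational. The paper works directly with the Cauchy kernels $(I - Lz^*)^{-1}$, which span $\mc{S}$, and checks in one line that
\[
\phi_D\bigl((I - Lz^*)^{-1}\bigr) \;=\; K_0^*(I - Dz^*)^{-1}K_0 \;=\; K_0^* K_z \;=\; K^b(0,z) \;=\; \mu_b\bigl((I - Lz^*)^{-1}\bigr),
\]
invoking Lemma \ref{RKext} for the second equality and Proposition \ref{CPmap} for the last. You instead expand these same Neumann series and extract coefficients to obtain the stronger vector identity $V^{\n}K_0 h = D^{\n}K_0 h$, then sandwich with $K_0^*$. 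This buys you a slightly sharper intermediate statement, but at the cost of the coefficient-extraction step (your ``only point requiring care''), which the paper avoids entirely by staying at the level of generating functions. Both routes are short; the paper's is marginally slicker because the spanning set $\{(I - Lz^*)^{-1} : z \in \B^d\}$ is already adapted to Lemma \ref{RKext}.
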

\begin{proof}
For any $D \supseteq V$, we can define a $CP$ map by the formula in the statement above:
$$ \phi _D (L^\alpha) := K_0 ^* D^\alpha K_0. $$ Recall that $ \mc{S} = \bigvee _{z \in \B ^d} (I-z^*L) ^{-1}.$
Then,
\ba  \phi_D \left( (I-z^*L) ^{-1} \right) & = & K_0 ^* (I-z^*D)^{-1} K_0 \nonumber \\
& =& K ^b (0 ,z ) \nonumber \\
& =& \mu _b \left( (I-z^*L) ^{-1} \right) \ea where we used Lemma \ref{RKext} in the first line and Proposition \ref{CPmap} in the third line above.
\end{proof}

\begin{prop}
    Given any $D_1 , D_2 \in \Ext{V}$ defined on $\J _k \supset \L (b)$ we have that $\phi _1 := \phi _{D_1} = \phi _{D_2} =: \phi _2 \in CP (\mc{A} , \H )$ if and only if
$D_1 \simeq _b D_2$.
\end{prop}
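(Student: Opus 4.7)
The $(\Leftarrow)$ direction is routine: if $U : \J_1 \to \J_2$ is an isometry intertwining $D_1$ with $D_2$ that restricts to the identity on $\L(b)$, then $UK_0 = K_0$, so
\[ \phi_{D_1}(L^\alpha) = K_0^* D_1^\alpha K_0 = K_0^* U^* D_2^\alpha U K_0 = \phi_{D_2}(L^\alpha) \]
for every $\alpha \in \F^d$, whence $\phi_{D_1} = \phi_{D_2}$ on all of $\mc{A} + \mc{A}^*$ by $*$-linearity.

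For $(\Rightarrow)$, the plan is to build the desired unitary directly on the cyclic span of $K_0 \H$. Because each $D_k$ is a $d$-isometry, Lemma \ref{starextend} produces $*$-representations $\Pi_{D_k}$ of $C^*(\mc{A})$ with $\Pi_{D_k}((L^\alpha)^* L^\beta) = (D_k^\alpha)^* D_k^\beta$, and the identification $\phi_{D_k}(\cdot) = K_0^* \Pi_{D_k}(\cdot) K_0$ gives $\phi_{D_k}((L^\alpha)^* L^\beta) = K_0^* (D_k^\alpha)^* D_k^\beta K_0$. Since $(L^\alpha)^* L^\beta \in \mc{A}^* \mc{A} \subset \mc{A} + \mc{A}^*$ by the semi-Dirichlet property, the hypothesis $\phi_{D_1} = \phi_{D_2}$ yields the key identity
\[ \langle D_1^\alpha K_0 h, D_1^\beta K_0 g \rangle_{\J_1} = \langle D_2^\alpha K_0 h, D_2^\beta K_0 g \rangle_{\J_2}, \qquad \alpha,\beta \in \F^d,\ h,g \in \H. \]
This identity shows that the prescription $U(D_1^\alpha K_0 h) := D_2^\alpha K_0 h$, extended linearly, is a well-defined isometry between the linear spans of $\{D_1^\alpha K_0 h\}$ and $\{D_2^\alpha K_0 h\}$. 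Cyclicity of $K_0 \H$ for each $D_k$ makes both spans dense, so $U$ extends to a unitary $\J_1 \to \J_2$, and the intertwining $UD_1 = D_2 U$ is immediate by prepending letters on the dense span.

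The decisive step, which I expect to be the main subtlety, is verifying that $U$ restricts to the identity on $\L(b)$. By Lemma \ref{RKext} applied in each $\J_k$, both $D_k$ reproduce the same kernel vectors:
\[ K_z h = (I - D_k z^*)^{-1} K_0 h = \sum_{n \geq 0} \sum_{|\alpha| = n} \bar{z}^\alpha D_k^\alpha K_0 h, \]
with the Neumann series converging in norm since $\|D_k z^*\| < 1$. Applying the bounded operator $U$ term by term turns the $D_1$-series into the $D_2$-series, so $U(K_z h) = K_z h$. Since $\{K_z h : z \in \B^d,\ h \in \H\}$ spans $\L(b)$, this yields $U|_{\L(b)} = I$, completing the unitary equivalence $D_1 \simeq_b D_2$.
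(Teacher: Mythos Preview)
Your proof is correct and follows essentially the same route as the paper: define $U$ on the cyclic span by $D_1^\alpha K_0 h \mapsto D_2^\alpha K_0 h$, use the isometric property of the $D_k$ together with $\phi_1 = \phi_2$ to see $U$ is isometric, and then use Lemma \ref{RKext} to conclude $U K_z h = K_z h$. The only cosmetic difference is that you spell out the Neumann series for $(I - D_k z^*)^{-1} K_0 h$ and apply $U$ term by term, whereas the paper invokes the intertwining $U D_1 = D_2 U$ (and $U K_0 = K_0$) to pass directly from $(I - D_1 z^*)^{-1} K_0 h$ to $(I - D_2 z^*)^{-1} K_0 h$.
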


\begin{proof}
   If $D_1 \simeq _b D_2$, it is obvious that $\phi _1 = \phi _2$. Conversely if $\phi = \phi _1 = \phi _2$ suppose that $D_k$ are $d-$isometries on $\J _k \supset \L (b)$. By assumption, $K_0 \H$ is cyclic for each $D_k$ so that $$ \J _k = \bigvee _{\alpha \in \F ^d} D_k ^\alpha K_0 \H. $$ Define a linear map $U : \J _1 \rightarrow \J _2$ by $U D_1 ^\alpha K_0 h := D_2 ^\alpha K_0 h$. This
is onto and it is an isometry:
\ba \ip{U D_1 ^\alpha K_0 h}{U D_1 ^\beta K_0 g} _{\J _2} & = & \ip{h}{K_0 ^* (D_2 ^\alpha ) ^*  D_2 ^\beta K_0 g} _\H \nn \\
& = & \ip{h}{\phi ( (L ^\alpha ) ^* L ^\beta) g}_\H \nn \\
& = & \ip{h}{K_0 ^* (D_1 ^\alpha ) ^* D_1 ^\beta K_0 g} _\H \nn \\
& =&  \ip{D_1 ^\alpha K_0 h}{ D_1 ^\beta K_0 g} _{\J _1}. \nn \ea
It is clear that $U D_1 = D_2 U$ and $U$ restricts to the identity on $\L (b)$ since $D_1, D_2 \in \Ext{V}$ implies that
\ba U K_z h & = & U (I-D_1 z^* ) ^{-1} K_0 h \nn \\
& = & (I-D_2 z^* ) ^{-1} K_0 h \nn \\
& = & K_z h, \nn \ea by Lemma \ref{RKext}.
\end{proof}

\begin{cor}
    There is a bijection between CP extensions $\phi$ of $\mu _b \in CP (\mc{S} , \H )$ to $\mc{A} + \mc{A} ^*$ and $\simeq _b$ equivalence classes of $\Ext{V ^b}$. \label{extchar}
\end{cor}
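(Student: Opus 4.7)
The previous Proposition already shows that $[D] \mapsto \phi_D$ is a well-defined injection from $\Ext{V^b}/{\simeq_b}$ into the set of CP extensions of $\mu_b$, so only surjectivity remains. The plan is, given any CP extension $\phi \in CP(\mc{A}, \H)$ of $\mu := \mu_b$, to construct an element $D_\phi \in \Ext{V^b}$ satisfying $\phi_{D_\phi} = \phi$.

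Given such a $\phi$, I form its Stinespring space $Q^2(\phi)$ together with the row isometry $R := \pi_\phi(L)$. Because $\phi$ extends $\mu$, the assignment $p \otimes h + N_\mu \mapsto p \otimes h + N_\phi$ realizes $P^2(\mu) \hookrightarrow Q^2(\phi)$ as a canonical isometric inclusion, and the Cauchy transform $\mc{C}_b : P^2(\mu) \to \L(b)$ defined by (\ref{ncCT}) is unitary. The idea is to abstractly extend $\mc{C}_b$ to all of $Q^2(\phi)$: set
\[
\J_\phi \,:=\, \L(b) \,\oplus\, \bigl(Q^2(\phi) \ominus P^2(\mu)\bigr),
\]
which contains $\L(b)$ as a subspace, let $\hat{\mc{C}} : Q^2(\phi) \to \J_\phi$ be the unitary equal to $\mc{C}_b$ on $P^2(\mu)$ and the natural identification on the orthogonal complement, and then define $D_\phi := \hat{\mc{C}}\, R\, \hat{\mc{C}}^*$, a row isometry on $\J_\phi \supset \L(b)$.

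To verify $D_\phi \supseteq V^b$, I appeal to Lemma \ref{RKext}. Using $\hat{\mc{C}}(I \otimes h) = \mc{C}_b(I \otimes h) = K_0 h$ together with the identity $(I - z^*R)^{-1}(I \otimes h) = C_z(L) \otimes h \in P^2(\mu)$, a direct calculation via Proposition \ref{CPmap} gives
\[
(I - z^*D_\phi)^{-1} K_0 h \,=\, \hat{\mc{C}}\bigl(C_z(L) \otimes h\bigr) \,=\, K_z h,
\]
as required. Cyclicity of $K_0\H$ for $D_\phi$ is inherited from cyclicity of $I \otimes \H$ for $R$ in $Q^2(\phi)$, which is built into the Stinespring construction, so $D_\phi \in \Ext{V^b}$. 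Finally, the identity $\phi_{D_\phi} = \phi$ is a short check on generators: for $\alpha \in \F^d$ and $h, g \in \H$,
\[
\ip{h}{\phi_{D_\phi}(L^\alpha)\,g} \,=\, \ip{K_0 h}{D_\phi^\alpha K_0 g}_{\J_\phi} \,=\, \ip{I \otimes h}{L^\alpha \otimes g}_\phi \,=\, \ip{h}{\phi(L^\alpha)\,g},
\]
and this extends to all of $\mc{A} + \mc{A}^*$ by taking adjoints and by complete positivity. The main wrinkle is that the extension of $\mc{C}_b$ past $P^2(\mu)$ is non-canonical, but any two choices produce $D_\phi$'s that are unitarily equivalent via a map fixing $\L(b)$, i.e.\ representatives of the same $\simeq_b$-class, so this ambiguity is harmless and consistent with the injectivity already established.
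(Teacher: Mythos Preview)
Your proof is correct and follows essentially the same route as the paper: both arguments take a CP extension $\phi$, pass to the Stinespring space $Q^2(\phi)$ with its row isometry $\pi_\phi(L)$, use the isometric embedding $P^2(\mu)\hookrightarrow Q^2(\phi)$ together with the Cauchy transform to identify $P^2(\mu)$ with $\L(b)$, and then transport $\pi_\phi(L)$ to a cyclic $d$-isometric extension of $V^b$. Your version is simply more explicit---you write down the extended unitary $\hat{\mc{C}}$, invoke Lemma~\ref{RKext} to check $D_\phi\supseteq V^b$, and verify $\phi_{D_\phi}=\phi$ on generators---whereas the paper compresses these checks into a sentence.
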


\begin{proof}
    It remains to show that the map $D \in \Ext{V} \mapsto \phi _D$ is onto the set of all CP extensions of $\mu _b$. If $\phi \in CP (\mc{A} ,\H)$ extends
$\mu _b \in CP (\mc{S} , \H )$ then we can identify (using the inverse Cauchy transform) $V = V^b$ with the partial $d-$isometry $\hat{V}$ acting on $P^2 (\phi) $ with initial and final spaces:
$$ \ker{\hat{V}} ^\perp = \bigvee   z^*(I-L z^* ) ^{-1} \otimes \H, \quad  \quad \ran{\hat{V}} := \bigvee (Lz^*) (I-Lz^* ) ^{-1} \otimes \H, $$ and
$$ \hat{V} \left( z^* (I-Lz^*)^{-1} \otimes h \right) = (Lz^*) (I-Lz^* ) ^{-1} \otimes h. $$ Since $\phi$ extends $\mu$, $P^2 (\mu)$ embeds
isometrically as $P^2 (\phi )$ into $Q^2 (\phi )$, and is clear from the definition of $\hat{V}$ that $\pi _\phi (L)$ is $d-$isometric extension of $\hat{V}$ with cyclic space $[I\otimes ]_\phi \H =  I \otimes \H + N _\phi$. Since the Cauchy transform
$\mc{C} : P^2 (\mu )  \rightarrow \L (b)$ obeys $\mc{C} (I \otimes h + N_\mu ) = K_0 h$, it follows that we can construct a $d-$isometric extension $D\simeq \pi _\phi (L) $ of $V^b \simeq \hat{V}$ with cyclic subspace $K_0 \H$ so that $ \phi (L ^\alpha ) = K_0 ^* D ^\alpha K_0. $
\end{proof}

\begin{thm}
    Assume that $d>1$. A $CP$ map $\mu =\mu_b : \mc{S} + \mc{S} ^* \rightarrow \L (\H)$ has a
unique extension to $\mc{A} + \mc{A} ^*$ if and only if $V$ is a co-isometry.
\label{nouniext}
\end{thm}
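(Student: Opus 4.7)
The strategy is to reduce, via Corollary \ref{extchar}, to showing that $\Ext{V}/\simeq_b$ is a singleton iff $V$ is a co-isometry. The tight extension $\nu_b$ corresponds to the equivalence class of the Bunce-Frazho-Popescu (BFP) minimal isometric dilation $W$ of $V$, so the content is to compare an arbitrary $D \in \Ext{V}$ against $W$.

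For the direction $V$ co-isometric $\Rightarrow$ unique extension, I will establish the rigidity statement $D^*|_{\L(b)} = V^*$ for every $D \in \Ext{V}$. Taking adjoints in the defining identity $D(V^*V) = V(V^*V) = V$ from the proof of Lemma \ref{contractext}, and using $V^*VV^* = V^*$, yields $(V^*V)D^* = V^*$; this pins down the $\dom{\check V}$-component of $D^*f$ to $V^*f$ for every $f \in \L(b)$. Writing $D^*f = V^*f + \xi$ with $\xi$ orthogonal to $\dom{\check V}$ inside $\J \otimes \C^d$, isometry of $D^*$ combined with $\|V^*f\|^2 = \langle f, VV^*f\rangle = \|f\|^2$ (using $VV^* = I_{\L(b)}$) forces $\xi = 0$. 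Hence $D^*\L(b) \subseteq \dom{\check V}$, so $\L(b)$ is co-invariant for $D$. The standard compression identity $PD^\alpha|_{\L(b)} = V^\alpha$ (where $P$ denotes the projection of $\J$ onto $\L(b)$) then gives
\[
\phi_D(L^\alpha) = K_0^* D^\alpha K_0 = K_0^* V^\alpha K_0 = \nu_b(L^\alpha)
\]
for every $\alpha \in \F^d$, so $\phi_D = \nu_b$.

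For the converse, assume $V$ is not a co-isometry, so $\ran{V}^\perp \neq 0$; the hypothesis $d>1$ gives $\ker{V} \neq 0$ (remark following the definition of $V$). Pick any non-zero contraction $Y : \ker{V} \to \ran{V}^\perp$ and set $V_Y := V + YP_{\ker{V}}$, a row contraction on $\L(b)$ that agrees with $V$ on $\dom{\check V}$ and therefore satisfies $V_Y \supseteq V$. Let $W_Y$ be the BFP minimal isometric dilation of $V_Y$. Then $W_Y$ extends $V_Y$ (and so $V$), and Lemma \ref{RKext} applied to $V_Y$ gives $K_zh = (I - V_Yz^*)^{-1}K_0h$, which combined with the $\L(b)$-cyclicity provided by BFP minimality shows $K_0\H$ is cyclic for $W_Y$; thus $W_Y \in \Ext{V}$. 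Because the minimal isometric dilation makes $\L(b)$ co-invariant with compression equal to the dilated operator, we have $PW|_{\L(b)} = V$ while $PW_Y|_{\L(b)} = V_Y$. An equivalence $W_Y \simeq_b W$ via an isometry $U$ fixing $\L(b)$ would force
\[
V_Y = PW_Y|_{\L(b)} = PU^{-1}WU|_{\L(b)} = V,
\]
contradicting $Y \neq 0$. Hence $W_Y$ and $W$ represent distinct classes in $\Ext{V}/\simeq_b$, and by Corollary \ref{extchar} the map $\mu_b$ has at least two CP extensions.

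The main obstacle is the rigidity step $D^*|_{\L(b)} = V^*$ in the co-isometric case: this is where the hypothesis enters via the equality $VV^* = I_{\L(b)}$, and the argument breaks precisely when $V$ fails to be a co-isometry, consistent with the converse direction. The remaining ingredients — the BFP minimal dilation, the co-invariance/compression dictionary, and the parametrization of Corollary \ref{extchar} — are standard once the rigidity is in hand, so the proof reduces to this one clean observation together with the explicit construction of $V_Y$ in the converse.
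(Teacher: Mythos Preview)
Your proof is correct and follows essentially the same route as the paper. One slip: you write ``isometry of $D^*$'' in the rigidity step, but $D \in \Ext{V}$ is a row isometry, so $D^*$ is only a contraction; fortunately the inequality $\|D^*f\|^2 \leq \|f\|^2$ is all you need, since $\|V^*f\|^2 = \|f\|^2$ already forces $\|\xi\|^2 \leq 0$. For the converse, the paper works directly with the (generally non-isometric) extension $D = V + Y$ on $\L(b)$ and shows $\phi_D \neq \nu_b$ by evaluating on $L_k(I - z^*L)^{-1}$, whereas you pass to the minimal isometric dilation $W_Y$ and invoke Corollary~\ref{extchar}; both arguments hinge on the same perturbation $Y : \ker{V} \to \ran{V}^\perp$, so the difference is cosmetic.
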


\begin{proof}
If $V$ is a co-isometry, then given any $D \in \Ext{V}$ we have that $V^* \subseteq D^*$, and since $\ran{V} = \L (b)$ it follows that $\L (b)$ is co-invariant for $D$ and $V^* K_z h = D^* K_z h$ for all $z \in \B ^d $ and $h \in \H$.
By Corollary \ref{extchar}, any CP extension $\phi $ of $\mu = \mu _b$ has the form $\phi  = \phi _D$ for some $D \in \Ext{V}$. Namely $\phi (L ^\alpha ) = K_0 ^* D ^\alpha K_0$. Since $D^* | _{\L (b)} = V^*$, it follows that $\phi  =  \phi _V =  \nu _b$, the tight extension of $\mu _b$.

Conversely suppose that $V=V^b$ is not co-isometric so that $\ran{V} \neq \L (b)$. In this case (using the assumption that $d>1$) one can construct a non-trivial $d-$contractive extension $D \supseteq V$ acting on $\L (b)$.
This relies on the fact that for $d>1$, and any $b \in [H^\infty _d \otimes \L (\H ) ] _1$, $\ker{V^b} \neq \{ 0 \}$ is never trivial (see Remark \ref{ntkernremark} below). Even though $D$ is generally not an isometry, we can define a completely positive map $\phi _D (L ^\alpha ) := K_0 ^* D ^\alpha K _0$ extending $\mu _b$ as in Lemma \ref{phiD}.

We claim that there exists an $h \in \H$ such that $D ^* K_0 h \neq V^* K_0 h$. Otherwise since $V^* \subseteq D ^*$  it would follow that
$D^* (K_z - K _0 ) h = V^* (K_z - K_0 ) h $ for all $h \in \H$  (recall here that $\ran{V} = \bigvee (K_z - K_0 ) \H$). Hence if $D ^* K_0 h = V^* K_0 h $ for all $h \in \H$ it follows that $D ^* K_z h = V^* K_z h $ for all $h \in \H$ and all $z \in \B ^d$. This would prove that $D=V$ (since $D^*$ acts on $\L (b)$).

It follows that if $D \neq V$, then there is an $h \in \H$ such that $D ^* K_0 h \neq V^* K_0 h$, so that there is a $1 \leq k \leq d$, $z \in \B ^d$ and
$g \in \H$ such that
$$ \ip{D^* _k K_0 h}{K_z g} _b \neq \ip{V^* _k K_0 h}{K_z g} _b.$$ By Lemma \ref{RKext},
\ba \ip{D^* _k K_0 h}{K_z g} & = &  \ip{h}{K_0 ^* D_k (I-z^*D) ^{-1} K_0 g} \nonumber \\
& = & \ip{h}{\phi _D \left( L_k (I-z^*L )^{-1} \right) g}. \nonumber \ea Similarly,
$$ \ip{V_k^* K_0 h}{K_z g} = \ip{h}{\phi _V \left( L_k (I-z^*L )^{-1} \right) g}. $$
This shows $\phi _D \neq \phi _V = \nu _b$.
\end{proof}

\begin{remark} \label{ntkernremark}
    When $d>1$, the kernel of the partial $d-$isometry $V^b$ on the Herglotz space $\L (b)$ is never trivial. We will present two (partial) proofs using the theory of solutions to the Gleason problem in $K(b)$
that will be developed in Section \ref{Gleasonsection}, see Proposition \ref{ntkernel} and Proposition \ref{ntkernel2}. The first  proof shows that $V^b$ has non-trivial kernel if there is a contractive solution $B$ to the Gleason problem in $K (b)$, $B \in \L ( \H , K(b) \otimes \C ^d )$ such that the closed span of the ranges of its component operators is not all of $K(b) \otimes \C ^d$. Although this condition is very restrictive, a proof that this is not possible in the general vector-valued case remains elusive at this time. The second proof, an abstract argument using  the Cuntz relations, shows that $\ker{V^b} \neq \{ 0 \}$ whenever $b \in [H^\infty _d \otimes \L (\H ) ] _1$ and $\H$ is finite dimensional (and of course $d>1$).

There is a third argument that works in full generality.  This proof uses the non-commutative or free analogue of the theory developed here for the free multiplier algebra $F^\infty _d \otimes \L (\H )$ of the vector-valued full Fock space $F^2 _d \otimes \H$ over $\C ^d$.  Here the full Fock space $F ^2 _d$ can be viewed as a non-commutative or free formal reproducing kernel Hilbert space, and the weak operator topology closed
unital operator algebra, $F^\infty _d$, generated by the non-commutative or free shift $L$ (the left creation $d$-isometry) can be viewed as the formal multiplier algebra of this free formal RKHS $F^2 _d$ \cite{Ball2003rkhs,Ball2006Fock}. This proof is beyond the scope of this paper and will be presented in forthcoming work.
\end{remark}

\subsection{Quasi-extreme maps}

\begin{defn}
Following \cite[Definition 3.7]{Jur2014AC}, a $CP$ map $\mu : \mc{S} + \mc{S} ^* \rightarrow \L (H )$ is called \emph{quasi-extreme} if $P^2 _0 (\mu ) = P^2 (\mu )$. Any $CP$ map $\phi : \mc{A} +\mc{A} ^* \rightarrow \L (\H )$ is called quasi-extreme if its restriction to $\mc{S} +\mc{S} ^*$ is quasi-extreme. An element $b \in [H^\infty _d \otimes \L (\H) ]_1$ is said to be quasi-extreme if $\mu = \mu _b \in CP (\mc{S} , \H )$ is quasi-extreme.
\end{defn}

This concept of quasi-extremity is a natural analogue of the Szeg\"{o} approximation or analytic polynomial density property $L^2 (\mu ) = P^2 (\mu ) = P^2 _0 (\mu )$ from the single variable, scalar-valued case.

\begin{cor}
    A $CP$ map $\mu : \mc{S} +\mc{S} ^* \rightarrow \L (\H)$ is quasi-extreme if and only if $R = \pi _\nu (L)$ is a Cuntz unitary. Here $\nu$ is the tight
extension of $\mu$. \label{cuntzu}
\end{cor}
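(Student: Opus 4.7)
The plan is to translate both conditions into statements about the projections $P$, $P_0$, and $Q_0$ on $Q^2(b)$, and then apply Lemma~\ref{intersect}. Since $R$ is already a row isometry (Remark~\ref{SSstarextend}), it is a Cuntz unitary precisely when $\sum_k R_k R_k^* = I$, which by Lemma~\ref{intersect}(i) is the condition $Q_0 = I$. On the other hand, quasi-extremity of $\mu = \mu_b$ is by definition $P^2_0(b) = P^2(b)$, i.e.\ $P_0 = P$. With these two reformulations the corollary reduces to showing that $Q_0 = I$ if and only if $P_0 = P$.

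For the ``if $R$ Cuntz'' direction, assume $Q_0 = I$ and apply Lemma~\ref{intersect}(iii) directly: $P_0 = Q_0 P = P$, so $\mu$ is quasi-extreme.

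For the converse, suppose $\mu$ is quasi-extreme, so $P_0 = P$. Using $P_0 = P Q_0$ (Lemma~\ref{intersect}(iii) again), this gives $P = P Q_0$, hence $P \leq Q_0$ as projections and therefore $P^2(b) \subset \ran{Q_0} = Q^2_0(b) = \ran{R}$. The crucial observation is that for every $h \in \H$, the vector $I \otimes h + N_\nu$ belongs to $P^2(b)$: indeed, evaluating the Cauchy kernel $(I - Lz^*)^{-1}\otimes h$ at $z = 0$ yields exactly $I \otimes h$, and such Cauchy kernels span $P^2(\mu) \hookrightarrow P^2(b) \subset Q^2(b)$. Hence $I \otimes \H + N_\nu \subset \ran R$. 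Since $Q^2(b)$ is the closed linear span of $L^\alpha \otimes h + N_\nu$ over all $\alpha \in \F^d$ and $h \in \H$, with all vectors corresponding to $\alpha \neq \emptyset$ already lying in $\ran R = Q^2_0(b)$ by definition, the inclusion just obtained for $\alpha = \emptyset$ completes the picture: every spanning vector of $Q^2(b)$ lies in $\ran R$. Therefore $\ran R = Q^2(b)$, i.e.\ $Q_0 = I$, so $R$ is a Cuntz unitary.

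I do not anticipate a genuine obstacle here. The entire content is bookkeeping around the projection identities proved in Lemma~\ref{intersect}, once one notices that the constant vectors $I \otimes h$ necessarily sit inside $P^2(b)$ and that quasi-extremity pushes them into the range of $R$, which then forces the Cuntz relation.
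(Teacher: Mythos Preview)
Your proof is correct. The direction ``$R$ Cuntz $\Rightarrow$ $\mu$ quasi-extreme'' is verbatim the paper's argument (apply $P_0 = Q_0 P$ with $Q_0 = I$). For the converse, the paper simply cites \cite[Proposition 3.10]{Jur2014AC}, whereas you supply a direct, self-contained argument: from $P_0 = P$ and $P_0 = P Q_0 = Q_0 P$ you extract $P \leq Q_0$, then observe that the constant vectors $I \otimes h + N_\nu$ lie in $P^2(b)$ and hence in $\ran{R}$, which together with $Q^2_0(b) \subset \ran{R}$ forces $\ran{R} = Q^2(b)$. This is a genuine, if modest, improvement in exposition: your version stays entirely within the projection calculus of Lemma~\ref{intersect} and avoids the external reference, at the cost of one extra paragraph. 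The underlying mechanism is the same in spirit (both routes ultimately rest on the identity $P_0 = Q_0 P$), so this is a difference of packaging rather than of strategy.
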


Recall that a row isometry $W \in \L (\H \otimes \C ^d , \H)$ is a \emph{Cuntz unitary} if it is also co-isometric, $W W^* = I _\H$.

\begin{proof}
    One direction is already proven in \cite[Proposition 3.10]{Jur2014AC}, namely if $\mu := \nu |_{\mc{S} +\mc{S} ^*}$ and $\mu = \mu _b$ is quasi-extreme
then $R$ is a row unitary. (Again the result there is stated only in the scalar case but the proof holds generally.) Conversely suppose that $R = \pi _b (L)$ is a Cuntz unitary. Then since $Q^2 _0 (b) = \ran{R}$ it follows that
$Q^2 _0 (b ) = Q^2 (b)$ so that $Q_0 = Q$. By Lemma \ref{intersect}, we then have that $P_0 = P Q_0 = PQ = P$ so that $P^2 _0 (b) = P^2 (b )$.
\end{proof}

\begin{cor}
    A $CP$ map $\mu \in CP (\mc{S}, \H)$ has a unique extension $\phi \in CP (\mc{A} , \H)$ if and only if
$\mu$ is quasi-extreme. In this case $\phi = \nu $, the tight extension of $\mu $. Equivalently $b \in [H^\infty _d \otimes \L (\H) ] _1$ is
quasi-extreme if and only if $V^b$ is a row co-isometry on $\L (b)$. \label{uniquext}
\end{cor}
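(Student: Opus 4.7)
The plan is to derive this corollary as a direct consequence of Theorem \ref{nouniext} — which, for $d>1$, equates uniqueness of the extension with $V = V^b$ being a row co-isometry — together with the additional equivalence that $\mu_b$ is quasi-extreme if and only if $V^b$ is a row co-isometry. Once this additional equivalence is established, the main biconditional of the corollary follows by chaining, and the identification $\phi = \nu_b$ is automatic since by Definition \ref{tightcand} $\nu_b$ is a CP extension of $\mu_b$. Thus the entire proof reduces to establishing the ``quasi-extreme $\Leftrightarrow$ co-isometric'' equivalence.

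For this equivalence I would argue via the projections in Lemma \ref{intersect}. By definition, $\mu_b$ is quasi-extreme iff $P^2_0(b) = P^2(b)$, equivalently iff $P_0 = P$ as projections on $Q^2(b)$. Lemma \ref{intersect}(ii) gives $P_0 = \mc{C}_b^* V V^* \mc{C}_b$, where $V V^*$ is regarded as the projection on $\K_b$ obtained by extending by zero off $\L(b)$; and by Proposition \ref{extendedCT}, $\mc{C}_b$ is a unitary from $Q^2(b)$ onto $\K_b$ carrying $P^2(b)$ isometrically onto $\L(b)$, so that $P = \mc{C}_b^* P_{\L(b)} \mc{C}_b$. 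Hence the projection equality $P_0 = P$ transfers under $\mc{C}_b$ to the operator identity $V V^* = P_{\L(b)}$ on $\K_b$. Since $V V^*$ is the projection onto $\ran V \subseteq \L (b)$, this last identity holds precisely when $\ran V = \L (b)$, i.e., precisely when $V^b$ is a row co-isometry.

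Combining these observations with Theorem \ref{nouniext} then yields the triple equivalence: for $d > 1$, $\mu = \mu_b$ has a unique CP extension to $\mc{A} + \mc{A}^*$ iff $V^b$ is a row co-isometry iff $\mu_b$ is quasi-extreme, and in this case the unique extension is necessarily the tight extension $\nu_b$. There is no serious obstacle here: Theorem \ref{nouniext} and Lemma \ref{intersect} already do the substantive work, and what remains is only the bookkeeping observation that the projection $V V^*$ (extended by zero to $\K_b$) equals $P_{\L(b)}$ exactly when $V$ is surjective onto $\L(b)$. The one small care point is to keep straight the natural identifications $P^2(b) \leftrightarrow \L(b)$ and $Q^2(b) \leftrightarrow \K_b$ under $\mc{C}_b$; Proposition \ref{extendedCT} is precisely what licenses this.
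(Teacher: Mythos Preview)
Your proposal is correct and follows essentially the same route as the paper. Both arguments hinge on the identification $\mc{C}_b P^2_0(b) = \ran{V^b}$ (equivalently Lemma \ref{intersect}(ii)) to link quasi-extremity with $V^b$ being co-isometric, and then invoke Theorem \ref{nouniext} to connect the co-isometry property to uniqueness of the CP extension. The only cosmetic difference is that the paper cites \cite[Theorem 3.8]{Jur2014AC} for the implication ``quasi-extreme $\Rightarrow$ unique extension,'' whereas you obtain that direction internally by going through the forward direction of Theorem \ref{nouniext} (whose proof does not require $d>1$); your argument is thus slightly more self-contained.
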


\begin{proof}
    As shown in \cite[Theorem 3.8]{Jur2014AC}, if $\mu$ is quasi-extreme, it has a unique extension. Since $\mc{C}  P^2 _0 (b )$ is $\ran{V ^b}$, the converse follows immediately from Theorem \ref{nouniext}.
\end{proof}

\begin{thm} \label{containb}
    An element $b \in [H^\infty _d \otimes \L (\H ) ] _1 $ is not quasi-extreme if and only if there is a non-zero $h \in \H$ so that $b h \in K (b)$.
\end{thm}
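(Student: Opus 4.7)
The plan is to chain together three facts already established in the paper. First, by Corollary \ref{uniquext}, $b$ is quasi-extreme if and only if the partial isometry $V=V^b$ is a row co-isometry on $\L(b)$, equivalently $\ran{V}=\L(b)$. From the discussion immediately following the definition of $V^b$, a function $F\in\L(b)$ lies in $\ran{V}^\perp$ if and only if $F(w)=F(0)$ for every $w\in\B^d$, i.e.\ $F$ is a constant function $\B^d\to\H$. Hence $b$ is \emph{not} quasi-extreme if and only if $\L(b)$ contains a non-zero constant function $h\in\H$.

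Second, I would transfer this statement to $K(b)$ through the onto isometric multiplier $U_b:K(b)\to\L(b)$ of Lemma \ref{ontoisomult}, namely $U_b=(I-b)^{-1}$ with inverse $U_b^{-1}=(I-b)$. Thus the constant $h$ lies in $\L(b)$ if and only if
\[
U_b^{-1}h\;=\;(I-b)h\;=\;h-bh\;\in\;K(b).
\]

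Third, I need to pass from $h-bh\in K(b)$ to $bh\in K(b)$. The link is the kernel vector $k_0^b h\in K(b)$, which as a function of $z$ equals $(I-b(z)b(0)^*)h=h-b(z)b(0)^*h$. Subtracting gives the identity
\[
(h-bh)\;-\;k_0^b h\;=\;-\,b\,(I-b(0)^*)h,
\]
inside $H^2_d\otimes\H$. Since $k_0^b h\in K(b)$, this shows $h-bh\in K(b)$ if and only if $b\,\tilde h\in K(b)$, where $\tilde h:=(I-b(0)^*)h$. Because $b$ is purely contractive, $\|b(0)\|<1$, so $I-b(0)^*$ is invertible and $\tilde h\neq 0$ exactly when $h\neq 0$. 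Combining the three equivalences gives the theorem.

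I do not anticipate any real obstacle: the argument is a short chain of equivalences. The only place where something must be checked, rather than merely unravelled, is the invertibility of $I-b(0)^*$, which is precisely the standing purely-contractive hypothesis on $b$; everything else is bookkeeping between the spaces $\L(b)$ and $K(b)$ via $U_b$ and the explicit formula for $k_0^b h$.
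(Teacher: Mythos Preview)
Your proposal is correct and follows essentially the same route as the paper's proof: both use the equivalence of quasi-extremity with $V^b$ being co-isometric (Corollary \ref{uniquext}), identify $\ran{V}^\perp$ with the constant functions in $\L(b)$, pass to $K(b)$ via the unitary multiplier $U_b$, and then use the identity $(I-b)h - k_0^b h = -b(I-b(0)^*)h$ together with invertibility of $I-b(0)^*$. The only cosmetic difference is that you present the argument as a single chain of equivalences, whereas the paper writes out the two implications separately.
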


This is a direct analogue of a classical fact \cite[IV-4,V-3]{Sarason-dB}, and generalizes \cite[Theorem 3.13]{Jur2014AC}.

\begin{proof}
    If $b$ is not quasi-extreme then the partial $d-$isometry $V$ on $\L (b )$ is not a co-isometry so that $\ran{V} \neq \L (b)$. As discussed in Section \ref{Herglotz}, if $F \in \L (b)$ is orthogonal to $\ran{V}$ then $F$ is a constant function,
$F(z) = F(0) =: f \in \H$ for all $z \in \B ^d$. Using the canonical unitary multiplier of $\L (b)$ onto $K (b)$ it follows that $(I-b) f \in K(b)$. However we also have that $k_0 ^b f \in K(b)$ and
$$ k_0 ^b  f = (I - b b(0) ^* ) f. $$ It follows that $b (I-b(0) ^* ) f \in K(b)$. Since we assume $b$ is purely contractive, $b(0)$ is a strict contraction and $h := (I-b(0) ^* ) f  \in \H$ is non-zero and $bh \in \K(b)$. (Here $bh = M_b k_0 h$, where $k_0$ is the Szeg\"{o} kernel map, could be zero if $k_0 h$ is in the kernel of $M_b$.)

The above argument is reversible: If $h \in \H$ is non-zero and $bh \in \K (b)$ then since $b(0)$ is a strict contraction we have $0 \neq f:= (I-b(0) ^* ) ^{-1} h \in \H$ and
$0 \neq k_0 ^b f = f - b b(0) ^* f \in \K (b)$. Then,
\ba k_0 ^b f & = & (I-b(0) ^* ) ^{-1} h - b b(0) ^* (I-b(0) ^* ) ^{-1} h \nn \\
& = & (I -b(0) ^* ) ^{-1} h - b (I-b(0) ^* ) ^{-1} h + bh \nn \\
& = & (I-b) f + b h \in \K (b). \nn \ea It follows that $0 \neq (I-b) f \in K(b)$ for a non-zero $f \in \H$ so that the constant function $F(z) = f$ is such that $F \in \L(b)$
and necessarily $F \perp \ran{V}$. Hence $V$ is not co-isometric and $b$ is not quasi-extreme.
\end{proof}

\begin{cor}
    If $b$ is quasi-extreme then so is $b \alpha ^*$ for any unitary $\alpha \in \L (\H )$. That is, $\mu _b \in CP (\mc{S} , \H)$ is quasi-extreme, if and only if all of the Aleksandrov-Clark CP maps $\mu _\alpha = \mu _{b\alpha ^*}$ are quasi-extreme.
\end{cor}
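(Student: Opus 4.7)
The plan is to reduce the statement to Theorem \ref{containb} by observing that the deBranges--Rovnyak kernel is invariant under right multiplication of $b$ by a unitary. Specifically, since $\alpha \in \L(\H)$ is unitary, $\alpha^* \alpha = I_\H$, and hence
$$ k^{b\alpha^*}(z,w) = \frac{I - b(z)\alpha^* \alpha b(w)^*}{1-zw^*} = \frac{I - b(z) b(w)^*}{1-zw^*} = k^b(z,w), $$
so $K(b\alpha^*) = K(b)$ as reproducing kernel Hilbert spaces of $\H$-valued functions on $\B^d$.

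Given this, I would invoke Theorem \ref{containb} in both directions. If $b$ is quasi-extreme, suppose for contradiction that $b\alpha^*$ is not quasi-extreme; then by Theorem \ref{containb} there exists a non-zero $h' \in \H$ with $(b\alpha^*) h' \in K(b\alpha^*) = K(b)$. Setting $h := \alpha^* h'$, which is non-zero by unitarity of $\alpha$, we have
$$ bh = b\alpha^* h' \in K(b), $$
contradicting the quasi-extremity of $b$ via Theorem \ref{containb}. Hence $b\alpha^*$ must be quasi-extreme. The reverse implication, needed for the biconditional in the second sentence of the corollary, follows by specializing $\alpha = I_\H$, which is of course unitary and yields $\mu_{bI^*} = \mu_b$.

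No genuine obstacle is anticipated; the only thing to verify carefully is that the substitution $h \mapsto \alpha h$ (or its inverse $h' \mapsto \alpha^* h'$) gives a genuine bijection of $\H \setminus \{0\}$ onto itself, which is immediate from unitarity of $\alpha$. Thus the containment $bh \in K(b)$ for some non-zero $h$ is symmetric in $b$ and $b\alpha^*$, and Theorem \ref{containb} converts this symmetric obstruction into a symmetric statement about quasi-extremity.
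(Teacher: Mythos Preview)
Your proof is correct and is precisely the argument the paper intends: the corollary is stated immediately after Theorem \ref{containb} without proof, and your observation that $k^{b\alpha^*}=k^b$ (so $K(b\alpha^*)=K(b)$) together with the bijection $h'\mapsto \alpha^* h'$ reduces the claim directly to that theorem.
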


\begin{prop}
    Let $\mu _1 , \mu _2 \in CP (\mc{S} , \H )$ be such that $\mu _2$ is quasi-extreme and $\mu _2 \geq \mu _1$. Then $\mu _1 $ is also quasi-extreme.
\end{prop}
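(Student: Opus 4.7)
The plan is to use the Stinespring-GNS picture of $P^2(\mu)$ to transfer the Szeg\H{o}-type density property from $\mu_2$ down to $\mu_1$. The domination $\mu_2 \geq \mu_1$ means $\mu_2 - \mu_1 \in CP(\mc{S}, \H)$, so for every $p \in \mc{S}$ and $h \in \H$,
\begin{equation*}
\ip{h}{\mu_1(p^*p)h}_{\H} \;\leq\; \ip{h}{\mu_2(p^*p)h}_{\H},
\end{equation*}
and more generally the sesquilinear form $\ip{\cdot}{\cdot}_{\mu_1}$ on $\mc{S} \otimes \H$ is dominated by $\ip{\cdot}{\cdot}_{\mu_2}$. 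In particular $N_{\mu_2} \subseteq N_{\mu_1}$, and so the formula
\begin{equation*}
T\bigl(p \otimes h + N_{\mu_2}\bigr) \;:=\; p \otimes h + N_{\mu_1}
\end{equation*}
gives a well-defined contractive linear map $T : P^2(\mu_2) \to P^2(\mu_1)$ with dense range (its image contains all $p \otimes h + N_{\mu_1}$).

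Next I would observe that $T$ carries $P^2_0(\mu_2)$ into $P^2_0(\mu_1)$. This is immediate: $T$ sends each generator $L^{\n} \otimes h + N_{\mu_2}$ with $\n \neq 0$ to the corresponding generator $L^{\n} \otimes h + N_{\mu_1} \in P^2_0(\mu_1)$, and by continuity $T$ maps the closed span in $P^2(\mu_2)$ into the closed span in $P^2(\mu_1)$.

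Finally, assume $\mu_2$ is quasi-extreme, so $P^2_0(\mu_2) = P^2(\mu_2)$. Then
\begin{equation*}
T\bigl(P^2(\mu_2)\bigr) \;=\; T\bigl(P^2_0(\mu_2)\bigr) \;\subseteq\; P^2_0(\mu_1).
\end{equation*}
Since $T$ has dense range, $P^2_0(\mu_1)$ is a dense subset of $P^2(\mu_1)$; but $P^2_0(\mu_1)$ is by definition a closed subspace, hence $P^2_0(\mu_1) = P^2(\mu_1)$, which is exactly quasi-extremity of $\mu_1$.

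There is really no serious obstacle here; the only point that requires care is the well-definedness of the contraction $T$, which is the standard GNS observation that a CP domination gives a contractive quotient map between the associated Stinespring spaces. Everything else is bookkeeping with the closed-span definitions of $P^2_0$ and $P^2$ and the density of polynomial images under $T$.
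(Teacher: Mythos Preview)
Your proof is correct and follows essentially the same approach as the paper: the paper constructs the same contractive map (called $E$ there) from $P^2(\mu_2)$ into $P^2(\mu_1)$ and uses it to transfer an approximation of $I\otimes h$ by non-constant symmetrized monomials from the $\mu_2$-norm to the $\mu_1$-norm. The only cosmetic difference is that the paper works explicitly with the generators $I\otimes h$, whereas you phrase the conclusion via the dense range of $T$; the underlying argument is identical.
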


\begin{proof}
    Since $\mu _2 \geq \mu _1$, $P^2 (\mu _2 ) $ is contractively contained in $P^2 (\mu _1 )$. Also since $\mu _2 $ is quasi-extreme $P^2 _0 (\mu _2 ) = P^2 (\mu _2 )$. For any $I \otimes h + N_2 \in P^2 (\mu _2 )$ choose a sequence $H_n \in \bigvee _{\n \neq 0 } L ^{\n} \otimes \H $ (here $\bigvee$ is just linear span, not norm closed) so that $H_n \rightarrow I \otimes h + N_2$ in the norm of $P^2 (\mu _2 )$. If $E : P^2 (\mu _2 ) \rightarrow P^2 (\mu _1)$ is the contractive embedding then
$$ E : \bigvee _{\n \neq 0 } ( L ^{\n} \otimes \H + N_2 ) \mapsto P^2 _0 (\mu _1 ), $$ so that $G_n := EH_n \in P^2 _0 (\mu _1)$. Hence
 \ba \| G_n - (I \otimes h + N_1) \| _{\mu _1} & = & \| E \left( H_n  - (I \otimes h + N_2 ) \right) \| _{\mu _1 }  \nn \\
 & \leq & \| H_n - (I \otimes h + N_2 ) \| _{\mu _2 } \rightarrow 0, \nn \ea and it follows that $P^2 _0 (\mu _1 ) = P ^2 (\mu _1)$.
\end{proof}

\section{The Gleason Problem in $K(b)$}\label{Gleasonsection}

This section studies the Gleason problem in a deBranges-Rovnyak subspace of vector-valued Drury-Arveson space. A solution to the Gleason problem in a deBranges-Rovnyak space $K(b)$ is the appropriate generalization of (the adjoint of) the restriction of the backward shift to $K(b)$ in the classical single variable theory. Our results here refine and extend those of \cite[Section 4]{Jur2014AC} obtained in the scalar-valued setting.

Given $b \in [H^\infty _d \otimes \L (H) ]_1$, consider the deBranges-Rovnyak space $K(b) = \H ( k ^b )$, the reproducing kernel Hilbert space of $\H$-valued functions on $\B ^d$ with kernel function
$$ k^b (z, w) := \frac{I - b(z) b(w) ^*}{1 - w^* z} \in \L (\H ); \quad \quad z,w \in \B ^d. $$

When $d>1$, the appropriate analogue of the shift operator is Arveson's $d-$shift $S : H^2 _d \otimes \C ^d  \rightarrow H^2 _d$, a partial isometry from $d$ copies of Drury-Arveson space into itself whose
component operators commute and act as multiplication by the independent variables, \cite{ArvIII}:
$$ (S \mbf{F} ) (z) = S \begin{bmatrix} F_1 \\ \vdots \\ F_d \end{bmatrix}  (z) = z \mbf{F} (z) = z_1 F_1 (z) + ... z_d F_d (z). $$ In contrast with the classical ($d=1$) case, deBranges-Rovnyak subspaces of Drury-Arveson space are in general not co-invariant for the component operators of the $d$-shift \cite{Ball2008}. Instead, the appropriate replacement for the restricted backward shift in this setting is a solution to the \emph{Gleason problem} \cite{Glea1964,Alpay2002,Ball2008,Ball2010,Ball2011dBR}:

\begin{defn} \label{Gledef}
A row-operator $X \in \L ( K (b) \otimes \C ^d , K(b) )$ solves the \emph{Gleason problem} in $K(b)$ if
\be z (X^*f) (z) := z_1 (X_1 ^* f ) (z) + ... + z_d (X_d ^* f ) (z) = f (z) - f(0); \quad \quad \forall f \in K (b). \label{Gleasoldef} \ee We say that a Gleason solution $X$ is \emph{contractive} if $$ X X ^* \leq I - k_0 ^b (k_0 ^b) ^*,$$ and is \emph{extremal} if equality holds in the above.
\end{defn}
Solutions to the Gleason problem in the Herglotz space $\L (b)$ are defined similarly although we say that a Gleason solution for $\L (b)$ is contractive if it is simply a $d-$contraction. In the case where $d=1$ the unique solution to equation (\ref{Gleasoldef}) is the adjoint of the restriction of the backward shift $S^*$ to $K(b)$, so that adjoints of Gleason solutions are natural analogues of the restricted backward shift in the several variable setting. Many references define a Gleason solution for $K(b)$ as the adjoint of our definition above, we prefer to view a Gleason solution as a row contraction.  Contractive solutions $X$ to the Gleason problem in $K(b)$ always exist, although they are in general non-unique \cite{Ball2010}.

\begin{defn}
 A linear map $B \in \L (H , \K (b)  \otimes \C ^d )$, $B = \begin{bmatrix} b_1 \\ \vdots \\  b_d \end{bmatrix}$, $b_j \in \L (\H , K(b))$, $1 \leq j \leq d$, is a solution to the Gleason problem for $b \in [H^\infty _d \otimes \L (\H )] _1$ provided that
$$ b(z) - b(0) = z \cdot B (z) := \sum _{j=1} ^d z_j b_j (z).$$ We say that $B$ is a \emph{contractive} Gleason solution for $b$ if
$$ B^* B \leq I - b(0) ^* b(0),$$ and an \emph{extremal} Gleason solution for $b$ if equality holds in the above.
\end{defn}
Superscript and subscript $b$'s will be omitted for the remainder of this section when this is clear from context.
\begin{lemma}
    A $d-$contraction $X$ solves the Gleason problem in $K (b)$ if and only if
    $$ X z^* k_z = k_z -k_0; \quad \quad \mbox{or equivalently,} \quad \quad k_z = (I-Xz^* ) ^{-1} k_0. $$
\end{lemma}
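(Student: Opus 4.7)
The statement is the $K(b)$-analogue of Lemma \ref{RKext}, and my plan is to reduce it to the same kind of kernel identity. The idea is to test the defining Gleason equation \eqref{Gleasoldef} against an arbitrary $h\in\H$ and then dualize into $K(b)$ via the reproducing property $\langle f(z),h\rangle_\H=\langle f,k_z h\rangle_{K(b)}$, so that the Gleason condition becomes a pointwise equation on the operator-valued kernel $k_z$.

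First, I would rewrite the right-hand side of \eqref{Gleasoldef} paired with $h\in\H$ as
\[
\langle f(z)-f(0),h\rangle_\H=\langle f,(k_z-k_0)h\rangle_{K(b)}.
\]
For the left-hand side, using conjugate linearity in the first argument, the reproducing property, and the adjoint relation, one has
\[
\Big\langle \sum_j z_j(X_j^*f)(z),h\Big\rangle_\H
=\sum_j \overline{z_j}\,\langle (X_j^*f)(z),h\rangle_\H
=\sum_j \overline{z_j}\,\langle f,X_j k_z h\rangle_{K(b)}
=\langle f,Xz^*k_z h\rangle_{K(b)},
\]
where I use the paper's convention $z^*k_z h=(\overline{z_1}k_z h,\ldots,\overline{z_d}k_z h)^T\in K(b)\otimes\C^d$ and view $X=(X_1,\ldots,X_d)$ as the row operator $K(b)\otimes\C^d\to K(b)$. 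Since $f\in K(b)$ and $h\in\H$ are arbitrary, equality of the two sides is equivalent to the identity $Xz^*k_z=k_z-k_0$ in $\L(\H,K(b))$, which is the first claimed equivalence.

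For the second equivalence, I would simply rearrange this identity to $(I-Xz^*)k_z=k_0$. Because $X$ is a $d$-contraction and $z\in\B^d$, the operator $Xz^*=\sum_j \overline{z_j}X_j$ satisfies $\|Xz^*\|\le|z|<1$, so $I-Xz^*$ is invertible on $K(b)$ via a convergent Neumann series and $k_z=(I-Xz^*)^{-1}k_0$. Conversely, this formula forces $Xz^*k_z=k_z-k_0$ by the same algebra. I do not anticipate any genuine obstacle: the argument is entirely parallel to Lemma \ref{RKext}, and the only care needed is bookkeeping of the conjugate-linearity convention on the inner products and of the row/column structure used in writing $X$ and $z^*$.
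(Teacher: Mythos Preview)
Your proof is correct and is exactly the natural argument; in fact the paper states this lemma without proof, treating it as immediate from the reproducing property and the analogy with Lemma~\ref{RKext}. Your bookkeeping of the conjugate-linear convention and the row/column structure of $X$ and $z^*$ is accurate, and the Neumann-series step for the equivalence with $k_z=(I-Xz^*)^{-1}k_0$ is precisely what is needed.
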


An analogous statement holds in $\L (b)$ replacing $K$ by $k$. Note that the analogue of the above lemma in $\L (b)$ implies that $D$ is a contractive solution to the Gleason problem in $\L (b)$ if and only if $D \supseteq V$ is a contractive extension of the partial $d-$isometry $V = V^b$ on $\L (b)$.  In particular, if $b$ is quasi-extreme then $\L (b)$ has $V$ as its unique contractive Gleason solution.

\begin{thm}\label{Glechar}
    $X$ is a $d$-contractive solution to the Gleason problem in $K(b)$ if and only if
    $$ X^* k_w  = w^* k_w   - B b(w) ^*, $$ where $B \in \L ( \H , K (b) \otimes \C ^d)$ is a contractive Gleason solution for $b$.
This defines a surjection from contractive Gleason solutions $B$ for $b$ onto contractive Gleason solutions $X$ for $K(b)$, $B \mapsto X(B)$. If $B$ is extremal then so is $X(B)$.
\end{thm}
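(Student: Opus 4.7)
The proof rests on a single algebraic identity that I would establish first. For any finite sum $v := \sum_i \alpha_i k_{w_i} h_i \in K(b)$ set
$$ u := \sum_i \alpha_i w_i^* k_{w_i} h_i \in K(b) \otimes \C^d, \quad \xi := \sum_i \alpha_i h_i, \quad \eta := \sum_i \alpha_i b(w_i)^* h_i. $$
A direct computation using the explicit kernel $k^b(z,w) = (I - b(z)b(w)^*)/(1 - zw^*)$ and the reproducing property yields $\|v\|^2 = \|u\|^2 + \|\xi\|^2 - \|\eta\|^2$ as well as $v(0) = \xi - b(0)\eta$. These two facts drive the rest of the argument.

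For the forward direction, given a contractive Gleason solution $B$ for $b$, I would define $X = X(B)$ on the dense span of kernels by the stated formula $X^* k_w h := w^* k_w h - B b(w)^* h$. The Gleason relation $z B(z) = b(z) - b(0)$ together with the reproducing property gives $\langle u, B\eta\rangle = \|\eta\|^2 - \langle \xi, b(0)\eta\rangle$. Combined with the key identity,
$$ \|u - B\eta\|^2 + \|v(0)\|^2 - \|v\|^2 \;=\; \|B\eta\|^2 + \|b(0)\eta\|^2 - \|\eta\|^2 \;\leq\; 0, $$
with equality if and only if $B$ is extremal. This single inequality simultaneously delivers consistency of the definition (take $v=0$), boundedness of $X^*$ with $X X^* \leq I - k_0 k_0^*$, and preservation of extremality $B \mapsto X(B)$. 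The Gleason property $z (X^* k_w h)(z) = k_w h(z) - k_w h(0)$ is then verified directly using $zB(z) = b(z) - b(0)$.

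For the converse, given a contractive Gleason solution $X$ for $K(b)$, I would set $D := \bigvee_{w \in \B^d;\, h \in \H} b(w)^* h \subseteq \H$ and define $B$ on $D$ by $B\eta := u - X^* v$. The dual form of the Gleason condition, $X(w^* k_w h) = k_w h - k_0 h$, yields $\langle u, X^* v\rangle = \|v\|^2 - \|\xi\|^2 + \langle \xi, b(0)\eta\rangle$, and the key identity combined with the contractivity $\|X^* v\|^2 + \|v(0)\|^2 \leq \|v\|^2$ then gives $\|u - X^* v\|^2 \leq \|\eta\|^2 - \|b(0)\eta\|^2$. This simultaneously proves the definition is consistent on $D$ and that $B^*B \leq I - b(0)^* b(0)$ on $D$. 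The one subtle step, and the main obstacle, is extending $B$ to all of $\H$: the structural identification $D^\perp = \bigcap_{w \in \B^d} \ker{b(w)}$, immediate from $\langle b(w)^* h, \eta\rangle_\H = \langle h, b(w)\eta\rangle_\H$, makes zero-extension on $D^\perp$ automatically compatible with both the Gleason relation $zB(z) = b(z) - b(0)$ and the global contractivity. With that in hand, the three conclusions of the theorem — the surjective correspondence $B \mapsto X(B)$, contractivity, and the extremality implication — all reduce to the one key identity.
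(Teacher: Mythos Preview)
Your proof is correct, and the forward direction (from $B$ to $X$) is essentially the paper's argument, reorganized around the single polarization identity $\|v\|^2 = \|u\|^2 + \|\xi\|^2 - \|\eta\|^2$; this packages the paper's kernel-by-kernel computations $k_w^* X X^* k_w \leq k_w^*(I - k_0 k_0^*)k_w$ into one inequality and makes well-definedness transparent.

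The converse, however, is genuinely different. Given a contractive Gleason solution $X$, the paper sets $A_w := w^* k_w - X^* k_w$, introduces the auxiliary positive kernels $k^A(z,w) := A_z^* A_w$ and $k^F(z,w) := b(z)(I - b(0)^*b(0))b(w)^*$, proves the domination $k^A \leq k^F$, and then builds $B$ as a composite $B = U_A^* E^* m_b \delta_0$ involving the contractive inclusion $E : \H(k^A) \hookrightarrow \H(k^F)$, a co-isometric multiplier $m_b : \H(\delta) \to \H(k^F)$, and a co-isometry $U_A$. Verifying that this $B$ is a Gleason solution then requires a separate argument eliminating spurious components in $\bigcap_z \ker{b(z)}$ via $\ker{m_b}$. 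Your route bypasses all of this RKHS factorization machinery: you define $B\eta := u - X^* v$ directly on $D = \bigvee_w b(w)^* \H$, bound it using the same identity, and extend by zero on $D^\perp = \bigcap_w \ker{b(w)}$. This is more elementary and self-contained; the paper's approach is more structural and makes the Douglas-type factorization underlying the construction explicit. One small point worth making explicit in your write-up: the relation $z(B\eta)(z) = (b(z) - b(0))\eta$ on $D$ should be verified directly from the Gleason property of $X$ (a two-line computation), since you assert compatibility of the zero-extension with the Gleason relation without first checking the relation on $D$ itself.
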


\begin{proof}
   First suppose that $X$ has the assumed form. Then,
\ba (z X^* k_w) (z) & = & z w^*  k_w (z) - zB(z) b(w) ^* \nn \\
& = & z w^*  k(z,w) - (b(z) - b(0) ) b(w) ^* \nn \\
& = & z w^* k(z,w) + (1 - b(z) b(w) ^* ) - (1-b(0) b(w) ^* ) \nn \\
& = & k(z,w) - k(0, w) = k_w(z) - k_w (0). \nn \ea This proves that $X$ is a solution to the Gleason problem in $K(b)$.

It remains to check that the assumption that $B$ is a contractive Gleason solution for $b$ implies $X$ is a contractive solution: For any $w \in \B ^d$,
\ba k_w ^* X X ^* k_w & = & (k_w ^* w - b(w) B^*) (w^* k_w -B b(w) ^* ) \nn \\
& = &k_w ^* w w^* k_w - w B(w) b(w) ^*  - b(w) B(w) ^* w^* + b(w) B(w) ^* B(w) b(w) ^*. \nn \\
 & = &   w w^* k(w,w) - (b(w) - b(0))b(w)^* - b(w)(b(w) ^* - b(0) ^* ) + b(w) B (w) ^* B (w) b(w ) ^* \nn \\
& \leq & w w^* k(w,w)  - (b(w) - b(0))b(w)^* - b(w)(b(w) ^* - b(0) ^* ) + b(w) (I- b(0) ^* b(0) ) b(w ) ^* \label{equal1} \\
& = & k(w,w) - k(w, 0) k (0,w) \nn \\
& = &  k_w ^* \left( I - k_0 k_0 ^* \right) k_w. \nn \ea  This proves that the contractivity condition
\be  X X ^* \leq I - k_0 k_0 ^*, \label{equal3} \ee holds on kernel maps $k_w$. Since the ranges of the kernel maps are dense in $K(b)$, it follows that $X$ is a contractive Gleason solution. If equality holds in equation (\ref{equal1}) then it holds in equation (\ref{equal3}). It follows that if $B$ is extremal then so is $X$.

Conversely suppose that $X$ is a contractive Gleason solution and for each $w \in \B^d$ define the map $A _w \in \L (\H , K(b) \otimes \C ^d)$ by
$$ A_w := w^* k_w -X^* k_w. $$ Define an $\L (\H)$-valued kernel function $k^A$ on $\B ^d$ by
$$ k^A (z,w) := A_z ^* A_w. $$ Some algebra similar to the first part of the proof shows
\ba A _z ^* A_w & = & k_z ^* zw^* k_w - 2k(z,w) + k(z,0) + k (0,w) +k_z ^* XX^* k_w \nn \\
& \leq & zw^* k(z,w) - 2k(z,w) + k(z,0) + k(0,w) + k(z,w) -k(z,0)k(0,w), \nn  \\
& = & - (I - b(z) b(w)^* ) + k(z,0) + k(0,w) - k(z,0)k(0,w) \nn \\
& = & b(z) (I - b(0) ^* b(0)) b(w) ^*, \nn \ea as positive kernel functions. Let $F(z) := b(z) \sqrt{ I - b(0) ^* b(0)}$, and
$k^F (z,w) := F(z) F(w) ^*$, then $k^A \leq k^F$ as positive $\L (\H )-$valued kernel functions on $\B ^d$.
Define the co-isometry $U_A : K(b) \otimes \C ^d \rightarrow \H (k^A )$ by
$$ U_A A_w := k^A _w. $$ This is a co-isometry with initial space $\bigvee _{z\in \B ^d} A_z \H$ since
$(k^A _z) ^* k_w ^A = k^A (z,w) = A_z ^* A_w$ by definition. Since $\H (k^A)$ is contractively contained in $\H (k^F)$, let $E : \H (k^A ) \rightarrow \H (k ^F)$ be the inclusion map. Then
$E^* k^F _w = k^A _w$ and
$$ A_w = U^* E^* k^F _w ; \quad \quad w \in \B^d. $$  Define the constant $\L (\H)$-valued kernel function
$$ \delta (z,w) := I - b(0) ^* b(0); \quad \quad z,w \in \B ^d.$$ It follows that $m_b : \H ( \delta ) \rightarrow \H (k^F )$, multiplication by $b(z)$, is a co-isometric multiplier of $\H (\delta )$ onto $\H (k^F)$. Since the kernel $\delta (z,w)$ is constant, the point evaluation maps obey $\delta _z = \delta _0$ for all $z \in \B^d$ and
\ba A_w & = & U^* E^* k_w ^F \nn \\
& = & U_A ^* E^* m_b m_b^* k_w ^F \nn \\
& =& U_A ^* E^* m_b \delta _w b(w) ^* \nn \\
& = & U_A ^* E^* m_b \delta _0 b(w) ^*. \nn \ea Define, $$ B:= U_A ^* E^* m_b \delta _0 \in \L (\H , K(b) \otimes \C ^d ), $$ this is independent of $w\in \B^d$. By construction
\ba X^* k_w & = & w^* k_w - A_w \nn \\
& = & w^* k_w - B b(w) ^*, \nn \ea and
\ba B^* B & = & \delta _0 ^* m_b^* E U_A U_A ^* E^* m_b \delta _0  \nn \\
& \leq & \delta (0,0) = I -b(0) ^* b(0). \nn \ea  This shows that if $B$ is a Gleason solution for $b$ then it is contractive in the sense of Definition \ref{Gledef}. To see that $B$ is a Gleason solution for $b$, calculate that
\ba z B(z) b(w) ^* & = & zw^* k(z,w) - k_z ^* z X^* k_w \nn \\
&= & zw^* k(z,w) - (k_z - k_0 ) ^* k_w \nn \\
& =& k(z,w) - (I - b(z) b(w) ^*) - k(z,w) + k(0,w) \nn \\
& =& (b(z) -b(0)) b(w)^* \quad \forall z,w \in \B^d. \nn \ea
If $zB(z) - (b(z) -b(0)) \neq 0$ then there is a non-zero $h$, $$ h \in \left( \bigvee _{w \in \B^d} \ran{b(w)}^* \right) ^\perp = \bigcap _{w \in \B^d} \ker{b(w)}, $$ so that $$ 0 \neq (z B(z) - (b(z) - b(0)) ) h = zB(z) h. $$ However $B = U_A^* E^* m_b \delta _0$ so that $z B(z) h = k_z ^* z Bh$. We will show that $h$ is in the kernel of $m_b \delta _0$. Recall that $m_b : \H (\delta ) \rightarrow \H (k^F)$ is a co-isometry. Since $m_b ^* k^F _z = \delta _0 b(z) ^*,$ the initial space of $m_b$ is
$$ \ker{m_b} ^\perp = \bigvee _{z \in \B ^d } \delta _0 b(z) ^* \H. $$ If $h \in \cap _{w\in \B ^d} \ker{b(w)}$ then
\ba \ip{ \delta _0 h}{\delta _0 b(z) ^* g} & = & \ip{h}{\delta (0,0) b(z) ^* g} \nn \\
& = & \ip{h}{(I - b(0) ^* b(0)) b(z)^* g } =0, \nn \ea since $$ (I-b(0) ^* b(0)) b(z) ^* g \in \bigvee _{w \in \B ^d} b(w) ^* \H \perp h. $$
This proves that $\delta _0 h \in \ker{m_b}$ so that $z B (z) h =0$ and
$$ z B(z) = b(z) - b(0), \quad \forall z \in \B^d. $$
\end{proof}

\begin{remark}
    One can develop an alternative proof of the above theorem, at least in the scalar case, using the Douglas factorization lemma and a maximum modulus principle argument.
\end{remark}

\begin{lemma} \label{Glemap}
Let $D \supseteq V$ be a $d-$contractive extension of $V$ on $\L(b)$. Then
$$ B ^D := U^* D  ^* U k_0 ( I - b(0) ^* ) ^{-1} (I - b(0) )  = U^* D^* K_0 (I -b(0)), $$ defines a contractive Gleason solution for $b$. If
$b$ is quasi-extreme then $B^V$ and $X^V := X(B^V)$ are extremal.
\end{lemma}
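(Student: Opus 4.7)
The plan is to verify the two defining conditions for a contractive Gleason solution for $b$: that $z \cdot B^D(z) = b(z) - b(0)$ and that $(B^D)^* B^D \leq I - b(0)^* b(0)$. The two stated formulas for $B^D$ agree by Lemma \ref{ontoisomult}: from $U k_0 = K_0(I - b(0)^*)$ we get $U k_0(I - b(0)^*)^{-1} = K_0$, so both expressions equal $(U^* \otimes I)\, D^* K_0(I - b(0))$. Here I read $U^*$ as applied componentwise on $\L(b) \otimes \C^d$.

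To verify $z \cdot B^D(z) = b(z) - b(0)$, I would pair with an arbitrary $g \in \H$ and work in $\L(b) \otimes \C^d$ by using the unitary $\tilde U := U \otimes I_{\C^d}$. Using the fact (Lemma \ref{ontoisomult}) that $\tilde U$ sends $z^* k_z^b g$ to $z^* K_z^b(I - b(z)^*) g$ componentwise, one gets
\be
\ip{z \cdot B^D(z) h}{g}_\H = \ip{K_0(I - b(0))h}{D\bigl[z^* K_z^b (I - b(z)^*) g\bigr]}_{\L(b)}. \nn
\ee
Since $z^* K_z^b \in \dom{\check V}$ and $D \supseteq V$, we may replace $D[z^* K_z^b (I - b(z)^*) g]$ by $(K_z^b - K_0^b)(I - b(z)^*) g$. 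The problem reduces to the operator identity
\be
(I - b(0)^*)\bigl(K^b(0,z) - K^b(0,0)\bigr)(I - b(z)^*) = b(z)^* - b(0)^*, \nn
\ee
which follows from a direct algebraic manipulation starting from $K^b(z,w) = (I - b(z))^{-1} k^b(z,w)(I - b(w)^*)^{-1}$: after factoring out $(I - b(0))^{-1}$ on the left, the bracketed expression simplifies to $(I - b(0))(I - b(0)^*)^{-1}(b(z)^* - b(0)^*)$.

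For contractivity, since $D$ is a $d$-contraction, $DD^* \leq I_{\L(b)}$, so
\be
(B^D)^* B^D = (I - b(0)^*) K_0^* D D^* K_0 (I - b(0)) \leq (I - b(0)^*) K^b(0,0)(I - b(0)). \nn
\ee
Using the Herglotz formula $K^b(0,0) = \re{H_b(0)} = (I - b(0))^{-1} + (I - b(0)^*)^{-1} - I$ and expanding, the right-hand side collapses to $I - b(0)^* b(0)$, giving $(B^D)^* B^D \leq I - b(0)^* b(0)$ as required.

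For the extremal case with $b$ quasi-extreme: Corollary \ref{uniquext} says $V = V^b$ is a row co-isometry, so $VV^* = I$, and with $D = V$ the inequality above becomes equality; thus $(B^V)^* B^V = I - b(0)^* b(0)$, so $B^V$ is an extremal Gleason solution for $b$. The conclusion that $X^V = X(B^V)$ is then extremal in $K(b)$ is exactly the last assertion of Theorem \ref{Glechar}. I expect the main (though still routine) obstacle to be the algebraic identity reducing $(I - b(0)^*)(K^b(0,z) - K^b(0,0))(I - b(z)^*)$ to $b(z)^* - b(0)^*$, since keeping track of the noncommutativity of $b(0)$ and $b(z)$ makes the manipulation slightly delicate even though the cancellation is clean.
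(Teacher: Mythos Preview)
Your proof is correct and follows essentially the same route as the paper: both reduce the Gleason identity to the algebraic fact $K(z,0)-K(0,0)=(I-b(z))^{-1}(b(z)-b(0))(I-b(0))^{-1}$ (you use its adjoint), and both prove contractivity and extremality via $(B^D)^*B^D=(I-b(0)^*)K_0^*DD^*K_0(I-b(0))$ together with the evaluation of $(I-b(0)^*)K^b(0,0)(I-b(0))$. The only cosmetic difference is that the paper evaluates $K_z^*\,z\,D^*K_0$ directly as a point evaluation in $\L(b)$, whereas you obtain the same identity through the pairing with $z^*k_z^b g$ and the unitary $U_b$.
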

\begin{proof}
Since $D$ is a $d-$contractive extension of $V$, 
$$ Dz^* K_z  = K_z - K_0. $$

Consider
\ba K_z^* zD^* K_0 & = & K(z,0) - K(0,0) \nn \\
& = & (I-b(z) ) ^{-1} (b(z) - b(0)) (I-b(0) )^{-1}.  \nn \ea
This proves that
$$ (z D^* K_0 ) (z)= \sum z_j (D_j ^* K_0) (z) =  (1-b(z)) ^{-1} (b(z) - b(0) ) (1-b(0) ) ^{-1} $$
Solving for $(b(z) - b(0) )$ in the above equation then yields
$$ b(z) - b(0) = z \left(  (I-b(z)) (D ^* K_0 ) (z) (I-b(0) ) \right). $$
It follows that
$$ B (z) := (I-b(z)) (D ^* K_0 ) (z) (I-b(0) ), $$ defines a solution to the Gleason problem. Alternatively,
using the canonical unitary multiplier $U = U_b : K (b) \rightarrow \L (b)$ this can be written as
$$ B (z) = (U^* D ^* U  k_0) (z) (I-b(0 ) ^* ) ^{-1} (I-b(0)).$$

Since $D$ is a $d$-contraction it follows that
\ba B^* B & = & (I-b(0) ^* ) K_0 ^*  D  D ^* K_0  (I-b(0)) \nn \\
& \leq & (I-b(0) ^*) K (0,0) (I-b(0)). \label{Gextremal} \\
& = & \frac12 (I-b(0) ^* ) (I+b(0)) + (I + b(0) ^* ) (I-b(0)) \nn \\
& = &  (I-b(0) ^* b(0) ), \nn \ea and $B$ is a contractive Gleason solution for $b$.

By Theorem \ref{Glechar}, if $B^V$ is extremal, so is $X^V = X(B^V)$. If $b$ is quasi-extreme, $V=V^b$ is a co-isometry so that
equality holds in the second line, (\ref{Gextremal}), of the above equation. This proves that $B=B^V$ and hence $X^V$ are extremal if $b$ is quasi-extreme.
\end{proof}
Let $X ^D = X(B^D)$ denote the contractive Gleason solution for $K(b)$ constructed using the contractive extension $D \supseteq V^b$ as in the previous lemma.

\begin{thm} \label{qeuniqsol}
The map $D \mapsto B^D$ is a bijection from contractive Gleason solutions for $\L (b)$ onto contractive Gleason solutions for $b$.
\end{thm}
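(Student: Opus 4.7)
The plan is to parametrize contractive extensions $D \supseteq V$ of $V$ on $\L(b)$ as $D = V + Y$ with $Y : \ker{V} \to \ran{V}^\perp$ a contraction (as noted after Lemma \ref{contractext}). Then $D^* = V^* + Y^* P_\perp$, where $P_\perp$ denotes the projection onto $\ran{V}^\perp$, and a short computation gives
$$B^D = B^V + U^* (Y^* P_\perp K_0)(I - b(0)).$$
Both injectivity and surjectivity of $D \mapsto B^D$ will reduce to analyzing the map $Y \mapsto Y^* P_\perp K_0$.

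Injectivity follows from density: since $U$ is unitary and $I - b(0)$ is invertible (because $b$ is purely contractive, $b(0)$ is a strict contraction), $B^{D_1} = B^{D_2}$ forces $Y_1^* P_\perp K_0 = Y_2^* P_\perp K_0$. The set $\{P_\perp K_0 h : h \in \H\}$ is dense in $\ran{V}^\perp$: Section \ref{Herglotz} identified $\ran{V}^\perp$ as the constant functions in $\L(b)$, so its adjoint $K_0^*|_{\ran{V}^\perp} : F \mapsto F(0)$ is injective. Continuity of $Y_j^*$ then forces $Y_1 = Y_2$, hence $D_1 = D_2$.

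For surjectivity, given a contractive Gleason solution $B$ for $b$, set $\Phi := (U \otimes I)(B - B^V)(I - b(0))^{-1} : \H \to \L(b) \otimes \C^d$. I will show $\Phi$ factors as $Y^* P_\perp K_0$ for a contraction $Y^* : \ran{V}^\perp \to \ker{V}$; then $D := V + Y$ has $B^D = B$ by construction. That $\Phi h$ lands in $\ker{V}$ is immediate from the characterization $\mbf{F} \in \ker{V}$ iff $\sum_j w_j F_j(w) \equiv 0$, together with $z(B - B^V)(z) \equiv 0$ (as both $B$ and $B^V$ solve the Gleason problem for $b$), which gives
$$\sum_j w_j (\Phi h)_j(w) = (I - b(w))^{-1}\bigl[w \cdot (B - B^V)(w)(I - b(0))^{-1} h\bigr] = 0.$$
Defining $Y^* P_\perp K_0 h := \Phi h$ on the dense subspace $\{P_\perp K_0 h\}$ of $\ran{V}^\perp$, one needs this assignment to extend by continuity to a contraction, which is equivalent to the estimate $\|\Phi h\|^2 \leq \|P_\perp K_0 h\|^2$.

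This norm estimate is the main obstacle. Writing $Gh := (U \otimes I) B (I - b(0))^{-1} h$, so that $\Phi h = Gh - V^* K_0 h$, the hypothesis $B^* B \leq I - b(0)^* b(0)$ and unitarity of $U \otimes I$ give
$$\|Gh\|^2 \leq \ip{h}{(I - b(0))^{-*}(I - b(0)^* b(0))(I - b(0))^{-1} h}_\H.$$
The key algebraic identity $I - \beta^* \beta = \tfrac{1}{2}\bigl[(I - \beta^*)(I + \beta) + (I + \beta^*)(I - \beta)\bigr]$ with $\beta = b(0)$ collapses the right side to $\ip{h}{\re{H_b(0)} h}_\H = \ip{h}{K^b(0,0) h}_\H = \|K_0 h\|^2$. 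Since $\Phi h \in \ker{V}$ and $V^* K_0 h \in \ran{V^*} \subseteq \ker{V}^\perp$ are orthogonal in $\L(b) \otimes \C^d$, and $\|V^* K_0 h\|^2 = \ip{VV^* K_0 h}{K_0 h} = \|P_{\ran{V}} K_0 h\|^2$, Pythagoras yields
$$\|\Phi h\|^2 = \|Gh\|^2 - \|V^* K_0 h\|^2 \leq \|K_0 h\|^2 - \|P_{\ran{V}} K_0 h\|^2 = \|P_\perp K_0 h\|^2,$$
completing the argument.
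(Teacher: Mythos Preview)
Your proof is correct and takes a genuinely different route from the paper's. The paper proves surjectivity by defining $D^*$ directly on kernel maps via $D^* K_z := z^* K_z + UB(I-b(0))^{-1}$ and then verifying contractivity through a kernel-function computation showing $K_z^* DD^* K_w \leq K(z,w)$; injectivity is handled by observing that $B^{D_1}=B^{D_2}$ forces $D_1^* K_0 = D_2^* K_0$, and since both $D_j^*$ extend $V^*$ they then agree on every $K_z$. Your argument instead exploits the structural parametrization $D = V + Y$ from the outset, reducing surjectivity to the single scalar estimate $\|\Phi h\| \leq \|P_\perp K_0 h\|$, which you obtain cleanly by writing $Gh = \Phi h + V^* K_0 h$ with orthogonal summands and invoking $\|Gh\|^2 \leq \|K_0 h\|^2$. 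Both approaches ultimately rest on the identity $(I-b(0)^*)^{-1}(I-b(0)^*b(0))(I-b(0))^{-1} = K^b(0,0)$, but yours isolates it as a norm bound at a single point rather than embedding it in a two-variable kernel calculation. Your route has the advantage of making the bijection with contractions $Y:\ker{V}\to\ran{V}^\perp$ explicit, which dovetails directly with the minimality statement in Corollary~\ref{min}; the paper's approach is slightly more self-contained in that it does not appeal to the density of $P_\perp K_0 \H$ in $\ran{V}^\perp$.
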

\begin{proof}
    The previous lemma shows that given any contractive Gleason solution $D \subseteq V = V^b$ for $\L (b)$ that
$$ B^D := U^* D^* K_0 (I-b(0)), $$ is a contractive Gleason solution for $b$. This map $D \mapsto B^D$ is clearly injective since
if $B = B^D $ and $B' = B^C$ for $C, D \supseteq V$ then
$$ D^* K_0 = C^* K_0. $$ Since $C^*, D^*$ are both extensions of $V^*$, we have
$$ D^* (K_z - K_0) = V^* (K_z - K_0) = C^* (K_z - K_0), $$ so that $D^* K_z = C^* K_z$ for all $z \in \B^d$ and $D=C$.

To prove that this map is surjective, we will show that its inverse can be defined on the set of all contractive Gleason solutions for $b$.
If $B$ is an arbitrary contractive Gleason solution for $b$, define a bounded linear map $(D^B) ^*  =: D^* : \L (b) \rightarrow \L (b) \otimes \C ^d$
by $$ D^* K_z := z^* K_z + UB(I-b(0)) ^{-1}; \quad \quad z \in \B ^d.$$ This construction is based on the observation that if $C \supseteq V$ and  $B = B^C$ then
\ba C^* K_z & = & z^* K_z + C^* K_0 \nn \\
& = & z^* K_z + U B^C (I - b(0))^{-1}. \nn \ea
By construction,
$$ D^* (K_z - K_0 ) = z^* K_z + UB (I - b(0)) - UB (I-b(0)) = z^* K_z, $$ and so if we can prove that $D^*$ is a contraction, the fact that
$D^* \supseteq V^*$ will imply that $D \supseteq V$ is a contractive Gleason solution for $\L (b)$ by Lemma \ref{contractext}.
Calculate the norm of $D^*$ on kernel maps:
\ba K_z ^* D D^* K_w & = & \left( K_z ^* z + (I-b(0)^*) ^{-1} B^* U^* \right) \left( w^* K_w + UB (I-b(0)) ^{-1} \right) \nn \\
& = & zw^* K(z,w) + (I-b(0) ^* ) ^{-1} B^* w^* k_w (I-b(w) ^*) ^{-1}  \nn \\
& &  + (I-b(z) ) ^{-1} k_z ^* z B (I-b(0)) ^{-1} + (I-b(0) ^* ) ^{-1} B^* B (I-b(0)) ^{-1} \nn \\
& \leq & zw^* K(z,w) + (I -b(0) ^* ) ^{-1} (b(w) ^* - b(0) ^*) (I-b(0)^* ) ^{-1} \nn \\
& & + (I-b(z) ) ^{-1} (b(z) - b(0) ) (I -b(0) ) ^{-1}
+ ( I-b(0) ^*) ^{-1} (I-b(0) ^* b(0) ) ( I - b(0) ) ^{-1} \nn \\
& = & zw^* K(z,w) + (K(0,w) - K (0,0) ) + (K (z,0) - K(0,0)) + K(0,0) \nn \\
& = & K(z,w) -\frac{1}{2} (H(z) + H(w) ^*) + K(0,w) + K(z,0) - K(0,0) \nn \\
& = & K(z,w). \nn \ea  This proves that $D^*$ is a contraction so that $D \supseteq V$ is a contractive Gleason solution for $\L (b)$.

This map $B \mapsto D^B$ is injective since if $C = D^{B_1} = D = D^{B_2}$ for contractive Gleason solutions $B_1, B_2$ for $b$ then necessarily
$$ B_1 (I-b(0) ) ^{-1} = B_2 (I -b(0) ) ^{-1}, $$ by the definition of $D^B$. Moreover if $D ' := D^{B^D}$ then
\ba (D' ) ^* K_z & =& z^* K_z + U B^D (I -b(0) ) ^{-1} \nn \\
& = & z^* K_z + D^* K_0 (I -b(0)) (I- b(0)) ^{-1} \nn \\
& = & D^* (K_z -K_0) + D^* K_0 \nn  \\
& = & D^* K_z, \ea so that $D' = D$. It follows that the maps $D \mapsto B^D$ and $B \mapsto D^B$ are inverses to one another and define bijections.
\end{proof}

\begin{cor} \label{min}
    The contractive Gleason solution $B^V$ for $b$ is minimal and unique:
     $$ (B^V) ^* B^V   \leq B^* B$$  for all contractive Gleason solutions, $B$, for $b$. Equality holds if and only if $B = B^V$.
     Similarly $(X^V) (X^V)^* \leq X X ^*$ for all contractive Gleason solutions, $X$, for $K(b)$, with equality holding if and only if $X = X^V$.
\end{cor}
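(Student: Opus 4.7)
The plan is to reduce each inequality to an orthogonal decomposition in $\L(b) \otimes \C^d$ measuring how a contractive extension $D \supseteq V$ on $\L(b)$ deviates from $V$ itself, and in the second case to transport this decomposition across $B \mapsto X(B)$.

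For the first assertion, Theorem \ref{qeuniqsol} lets me write every contractive Gleason solution for $b$ as $B = B^D = U^* D^* K_0 (I-b(0))$ for a unique contractive $D \supseteq V$ on $\L(b)$, whence $(B^D)^* B^D = (I-b(0)^*) K_0^* D D^* K_0 (I-b(0))$ and it suffices to prove $K_0^* D D^* K_0 \geq K_0^* VV^* K_0$ on $\H$. Writing $D = V + Y$ with $Y : \ker{V} \to \ran{V}^\perp$ a contraction (extended by zero on $\ker{V}^\perp$, as recorded just after Lemma \ref{contractext}), one has $Y^* = 0$ on $\ran{V}$ and $Y^*(\ran{V}^\perp) \subset \ker{V}$, so $D^* K_0 h = V^* K_0 h + Y^*(I-VV^*) K_0 h$ is an orthogonal sum in $\L(b) \otimes \C^d$, the first summand lying in $\ker{V}^\perp$ and the second in $\ker{V}$. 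Taking norms yields the operator inequality, and equality for every $h \in \H$ forces $Y^*(I-VV^*) K_0 = 0$; the identity $B^D - B^V = U^* Y^*(I-VV^*) K_0 (I - b(0))$ then gives $B = B^V$.

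For the second assertion, every contractive Gleason solution for $K(b)$ is of the form $X = X(B)$ by Theorem \ref{Glechar}, with $X(B)^* k_w g = w^* k_w g - B b(w)^* g$. The key observation is that since $w B(w) = b(w) - b(0)$ for \emph{any} contractive Gleason $B$, the cross term
\[
\left\langle \sum_i w_i^* k_{w_i} g_i,\; B\psi \right\rangle_{K(b) \otimes \C^d} = \sum_i \langle g_i, (b(w_i) - b(0))\psi \rangle_\H, \qquad \psi := \sum_i b(w_i)^* g_i,
\]
is independent of the particular $B$. Expanding $\|X(B)^* f\|^2 - \|(X^V)^* f\|^2$ for $f = \sum_i k_{w_i} g_i$ therefore collapses to the quartic contribution $\|B\psi\|^2 - \|B^V \psi\|^2 = \langle \Delta \psi, \psi \rangle_\H$, where $\Delta := B^* B - (B^V)^* B^V \geq 0$ by the first part. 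Density of finite sums of kernel vectors in $K(b)$ now gives $XX^* \geq X^V (X^V)^*$.

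The main obstacle is the equality clause of the $X$-inequality, since \emph{a priori} $\langle \Delta \psi, \psi \rangle = 0$ for all $\psi = \sum_i b(w_i)^* g_i$ only says that $\Delta$ vanishes on the subspace $\mathcal R := \bigvee\{ b(w)^* g : w \in \B^d,\, g \in \H\}$ of $\H$, not that $B\psi = B^V \psi$ on $\mathcal R$. I resolve this by reusing the first-part decomposition: $B\psi - B^V \psi = U^* Y^* (I-VV^*) K_0 (I-b(0))\psi$ lies in $U^*(\ker{V})$, which is orthogonal in $K(b) \otimes \C^d$ to $B^V \psi \in U^*(\ker{V}^\perp)$, so the cross terms in $B^* B - (B^V)^* B^V$ vanish and $\Delta = (B - B^V)^* (B - B^V)$. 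Hence $\langle \Delta \psi, \psi \rangle = \|B\psi - B^V \psi\|^2$, and vanishing on $\mathcal R$ forces $B b(w)^* = B^V b(w)^*$ for every $w$, which by the defining formula of Theorem \ref{Glechar} is precisely $X = X^V$.
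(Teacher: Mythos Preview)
Your proof is correct and follows essentially the same route as the paper: parametrize $B=B^D$ via Theorem~\ref{qeuniqsol}, decompose $D=V+Y$ with $Y:\ker{V}\to\ran{V}^\perp$, and use the resulting orthogonality to compare $(B^D)^*B^D$ with $(B^V)^*B^V$; then for the $X$-inequality, push the difference through the formula of Theorem~\ref{Glechar} and observe that the cross terms are $B$-independent because $zB(z)=b(z)-b(0)$. The paper carries out the second computation on single kernel maps $k_z$ and obtains the off-diagonal identity $k_z^*\bigl(XX^*-X^V(X^V)^*\bigr)k_w=b(z)\Delta\,b(w)^*$, while you work with finite sums $f=\sum_i k_{w_i}g_i$; these are the same calculation. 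Your treatment of the equality clause is in fact more explicit than the paper's: you spell out that $B-B^V$ takes values in $U^*\ker{V}$, orthogonal to $\ran{B^V}\subset U^*\ker{V}^\perp$, whence $\Delta=(B-B^V)^*(B-B^V)$ and so $\langle\Delta\psi,\psi\rangle=0$ forces $B\psi=B^V\psi$. The paper uses this same orthogonality (via $DD^*=VV^*+CC^*$ from the first part) but leaves the final step implicit.
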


\begin{proof}
Given any $d-$contractive extension $D$ of $V$, $D = V + C$ where $C : \ker{V} \rightarrow \ran{V} ^\perp$ is a $d$-contraction so that $D D ^* = V V ^* + C C ^*$. By Theorem \ref{qeuniqsol}, the map $D \mapsto B^D$ is onto, so that we can assume $B = B^D$ for such a contractive extension $D$. Recall that
$$ B^D = U^* D ^* A^*, $$ where $A^* \in \L ( \H , \L (b) )$ is defined by
$$ A^* = U k_0 (I-b(0) ^* ) ^{-1} (I-b(0)) = K_0 (I-b(0)). $$ It follows that
\ba (B^D) ^* B^D & = & A D D ^* A^*  \nn \\
& = & A VV ^* A^* + A C C ^* A ^* \nn \\
& \geq  & A V V ^* A^*  \nn \\
& = & (B^V )^* B^V. \nn \ea
Moreover $(B^D) ^* B^D = (B^V) ^* B^V$ if and only if $C^* A^* =0$. Since $\ran{A^*} = \bigvee K_0 \H$ and $\ran{C} \subset \ran{V} ^\perp$ is spanned by constant functions, it follows that $C^* A^* =0$ if and only if $C = 0$, which happens if and only if $D=V$ and $B^D = B^V$.

Similarly, and without loss of generality, we can assume that $X = X^D = X(B^D)$ for some contractive extension $D$ of $V$ on $\L (b)$. Then,
\ba k_z ^* X ^D (X^D)^* k_z & = & (z^* k_z - B^D b(z)^* ) ^* (z^* k_z - B^D b(z)^* ) \nn \\
&= & z z^* k(z,z) -(b(z) - b(0) ) b(z) ^* - b(z) (b(z)^* - b(0) ^* ) + b(z) (B^D) ^* B^D b(z) ^*, \ea  so that
$$ k_z ^* \left( X^D  (X^D) ^* - X^V (X^V) ^* \right) k_z = b(z) \left( (B^D)^* B^D  - (B^V) ^* B^V \right) b(z) ^* \geq 0.$$
Equality holds if and only if
$$ B^V  b(z)^* = B^D b(z) ^* \quad \forall z  \in \B^d,$$ in which case $X^V = X(B^V) = X(B^D) = X^D = X$.
\end{proof}

\begin{remark} \label{minremark}
    It follows that if $b$ is not quasi-extreme, then $B^V$ is not extremal so that
    $(B ^V) ^* B^V < I - b(0) ^* b(0)$.
\end{remark}

\begin{thm} \label{Glebij}
Suppose that $d>1$. The map $B \mapsto X(B)$ is a bijection if and only if $\bigvee _{z \in \B ^d } b(z)^* \H = \H$ or equivalently $\bigcap _{z\in \B^d } \ker{b(z)} = \{ 0 \}$.
If this condition holds then the contractive Gleason solution $X(B)$ is extremal if
and only if $B$ is extremal. In particular this holds if $b$ is quasi-extreme.
\end{thm}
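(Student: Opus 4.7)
My plan is to first note that surjectivity of $B \mapsto X(B)$ is already contained in Theorem \ref{Glechar}, so the bijectivity question reduces to injectivity.  Reading off the defining formula $X(B)^* k_w = w^* k_w - B b(w)^*$, I see that $X(B_1) = X(B_2)$ if and only if $(B_1 - B_2) b(w)^* = 0$ for every $w \in \B^d$, equivalently $B_1 - B_2$ annihilates $\bigvee_w b(w)^* \H$.  Hence injectivity is equivalent to $\bigvee_w b(w)^* \H = \H$, which by taking orthogonal complements is the same as $\bigcap_w \ker{b(w)} = \{0\}$.  If this condition holds then $B_1 - B_2$ must vanish on a dense set and so $B_1 = B_2$, which handles the easy direction of the biconditional.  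The ``in particular'' clause falls out too: if $b$ is quasi-extreme then Theorem \ref{containb} rules out the existence of any nonzero $h \in \H$ with $bh \in K(b)$, so a fortiori no nonzero $h$ satisfies $bh \equiv 0$, and hence $\bigcap_w \ker{b(w)} = \{0\}$.

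For the converse direction I would assume $\H_0 := \bigcap_w \ker{b(w)} \neq \{0\}$ and manufacture two distinct contractive Gleason solutions with the same image under $X(\cdot)$.  Pick $h_0 \in \H_0$ nonzero; since $bh_0 \equiv 0 \in K(b)$, Theorem \ref{containb} forces $b$ to be non-quasi-extreme, whence Remark \ref{minremark} gives the strict operator inequality $(B^V)^* B^V < I - b(0)^* b(0)$, so $B^V$ has ``room to perturb''.  The $d>1$ hypothesis enters through Remark \ref{ntkernremark}: nontriviality of $\ker{V^b}$, transferred through the unitary multiplier $U_b: K(b) \to \L(b)$ (which preserves the condition $z \cdot F(z) = 0$), produces a nonzero $\eta \in K(b) \otimes \C^d$ with $z \cdot \eta(z) = 0$.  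I would then define the rank-one operator $E: \H \to K(b) \otimes \C^d$ by $Eg := \|h_0\|^{-2} \langle h_0, g \rangle \eta$; since $b(w)^* \H \subset h_0^\perp$ (because $h_0 \in \H_0 = (\bigvee_w b(w)^* \H)^\perp$), we have $E b(w)^* = 0$, and since $z \cdot (Eg)(z) = 0$, the candidate $B^V + cE$ is still a Gleason solution for $b$ for every scalar $c$.

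The main obstacle is verifying that $B^V + cE$ remains contractive for some nonzero $c$.  My plan is to exploit the structure of the perturbation: $(B^V + cE)^*(B^V + cE) - (B^V)^* B^V = cE^* B^V + \bar c (B^V)^* E + |c|^2 E^* E$ is a self-adjoint finite-rank operator whose range is contained in the span of $h_0$ and $(B^V)^* \eta$, a subspace of dimension at most two; combined with the strict positivity of $\Delta := I - b(0)^* b(0) - (B^V)^* B^V$ from Remark \ref{minremark} and rescaling $\eta$ to be small, a Schur-complement estimate on this two-dimensional range forces $(B^V + cE)^*(B^V + cE) \leq I - b(0)^* b(0)$ for a suitable nonzero $c$.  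Thus $B^V + cE \neq B^V$ is a contractive Gleason solution with $X(B^V + cE) = X(B^V)$, defeating injectivity.  Finally, for the extremality equivalence under the hypothesis $\bigvee_w b(w)^* \H = \H$: the direction ``$B$ extremal $\Rightarrow X(B)$ extremal'' is already in Theorem \ref{Glechar}, and the converse follows by reinspecting the chain of inequalities in that proof: equality $X(B) X(B)^* = I - k_0 k_0^*$ on each $k_w$ is equivalent to $b(w) C b(w)^* = 0$, where $C := I - b(0)^* b(0) - B^* B \geq 0$, hence to $C^{1/2}$ vanishing on $b(w)^* \H$ for every $w$, and by the density hypothesis to $C = 0$, i.e., $B$ extremal.
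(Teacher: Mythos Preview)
Your proof is correct and follows the same overall strategy as the paper. Two points of comparison are worth noting.

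For the contractivity of the perturbed solution $B^V + cE$, the paper avoids your Schur-complement detour by observing that the cross terms already vanish. Since $B^V = U_b^* V^* K_0 (I-b(0))$, the range of $B^V$ lies in $(U_b^* \otimes I_d)\ker{V}^\perp = \bigvee_{z} z^* k_z \H$, while your vector $\eta$ (being the $U_b^*$-image of an element of $\ker{V}$) lies in the orthogonal complement of that span. Hence $(B^V)^*\eta = 0$, so $E^* B^V = (B^V)^* E = 0$ and
\[
(B^V + cE)^*(B^V + cE) = (B^V)^* B^V + |c|^2 E^* E.
\]
This makes your two-dimensional Schur-complement estimate unnecessary; one only needs $|c|^2 E^* E \leq \Delta$, which the paper handles (somewhat briskly) by invoking Remark~\ref{minremark}. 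Your appeal to ``strict positivity of $\Delta$'' is a little loose in the same way---Remark~\ref{minremark} only gives $\Delta \geq 0$ with $\Delta \neq 0$---but with the orthogonality in hand your argument and the paper's are on equal footing here.

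For the extremality equivalence under the hypothesis $\bigvee_w b(w)^*\H = \H$, your approach is actually cleaner than the paper's. You re-inspect the single inequality in the chain from Theorem~\ref{Glechar} and read off that extremality of $X(B)$ forces $b(w) C b(w)^* = 0$ for all $w$, hence $C^{1/2}b(w)^* = 0$, hence $C = 0$ by density. The paper instead returns to the auxiliary kernels $k^A, k^F$ and the multiplier $m_b: \H(\delta) \to \H(k^F)$ constructed inside the proof of Theorem~\ref{Glechar}, showing that the range hypothesis makes $m_b$ isometric and then computing $B^*B = \delta_0^* m_b^* m_b \delta_0 = I - b(0)^* b(0)$ directly. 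Both work; yours is more transparent.
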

The condition $\bigcap _{z \in \B ^d} \ker{b(z)^*} = \{ 0 \}$ says that $b$ has no identically `zero columns', and this always holds in the scalar-valued case where $b \in [H^\infty _d ]_1$. One usually defines $\mr{supp} (b) := \bigvee _{z \in \B ^d} \ran{b(z)^*}$, as in \cite{BES2006cnc}.
\begin{proof}
By Theorem \ref{Glechar}, given any contractive Gleason solution $X$ for $K(b)$, there is a contractive Gleason solution $B$ for $b$ so that $X = X(B)$, and the map $B \mapsto X(B)$ from contractive Gleason solutions for $B$ to contractive Gleason solutions for $K(b)$ is onto.

Suppose that $\bigvee _{z \in \B ^d} \ran{b(z)^*} = \H.$ If $X = X(B) = X(B')$ where $B, B'$ are two contractive Gleason solutions for $b$. Then for all $z \in \B^d$, $$ B b(z)^* = B' b(z) ^*, $$ and the assumption on the closed span of the ranges of the $b(z) ^*$ implies that $B = B'$ so that the map $B \mapsto X(B)$ is a bijection.

Recall the notation of the proof of Theorem \ref{Glechar}.  The assumption that $\bigvee _{z \in \B ^d} \ran{b(z)^*} = \H$ implies that $m_b : \H (\delta ) \rightarrow \H (k^F )$ (as defined in the proof of Theorem \ref{Glechar}) is an onto isometric multiplier: Since $\delta (z,w) = (I -b(0) ^* b(0))$ is a positive constant operator, any function $G \in \H (\delta )$ is a constant, $G(z) = G(0) = g \in \H$. Moreover $G \in \ker{m_b}$ if and only if $$ b(z) G(z) = b(z) g =0; \quad \quad \forall z \in \B^d.$$ By assumption $\cap _{z \in \B^d} \ker{b(z)} = \{0 \}$ so that $G \equiv 0$ and $m_b ^* m_b = I _{\H (\delta )}$.

If $X$ is extremal and $b$ obeys this range condition then $k^F = k^A$ and $A_w = U_A ^* k^F _w$ so that $B = U_A ^* m_b \delta _0$ and
$$ B^* B = \delta _0 ^* m_b^* m_b \delta _0 = \delta (0,0 ) = I - b(0) ^* b(0). $$ This proves that $B$ is extremal.

If $\bigvee _{z \in \B ^d } b(z) ^* \H \neq \H$ then its orthogonal complement $\bigcap _{z \in \B ^d} \ker{b(z)} $ is non-trivial, so that there is a non-zero $h \in \H$ such that $b(z) h =0$ for all $z \in \B^d$,
and $$ b h = 0 \in K(b). $$ By Theorem \ref{containb}, $b$ is not quasi-extreme in this case.

Now suppose that there is a non-zero $h$ orthogonal to $\bigvee b(z) ^* h$. Recall by Remark \ref{ntkernremark} since $d>1$, the kernel of $V = V^b$ is non-trivial so that
$$ U_b ^* \ker{V} ^\perp = \bigvee z^* k_z \H, $$ is not all of $K(b) \otimes \C ^d$. Recall by Lemma \ref{Glemap} that $B := B^V$ is given by the formula
$$ B = U_b ^* V^* K_0 (I - b(0) ), $$ and so by Remark \ref{minremark} above, we can construct a rank-one map $B' $ from the one-dimensional span of $h$ into the orthogonal complement
of $\bigvee z^* k_z \H $  such that
$$ (B + B' ) ^* (B + B') = B B^* + B' (B' ) ^* \leq I - b(0) ^* b(0). $$ Here we used that $B'$ takes $h$ into the orthogonal complement of the range of $B = B^V$. Then $B + B'$ is a contractive
Gleason solution for $b$ since
$$ z B' (z) = (z^* k_z ) ^* B' = 0, $$ so that
$z (B + B' ) (z) = z B(z) = b(z) - b(0).$ Finally,
\ba X (B + B' ) k_z & = & z^* k_z - (B + B' ) b(z) ^* \nn \\
& = & z^* k_z - B b(z) ^* = X(B). \nn \ea This proves that the map $B \mapsto X(B)$ is not injective.
\end{proof}

\begin{cor}
The map $D \mapsto X^D := X(B^D )$ is a surjection from contractive Gleason solutions $D \subseteq V$ for $\L (b)$ onto contractive
Gleason solutions for $K(b)$.  This map is injective if and only if $\bigcap _{z\in \B ^d} \ker{b(z)} = \{ 0 \}$.
\end{cor}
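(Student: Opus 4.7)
The plan is to obtain this corollary essentially for free by composing the two main bijection/surjection results already at hand. First I would invoke Theorem \ref{qeuniqsol}, which establishes that the map $D \mapsto B^D$ is a bijection from the set of contractive Gleason solutions $D \supseteq V$ for $\L(b)$ onto the set of contractive Gleason solutions for $b$. Next I would invoke Theorem \ref{Glechar} (for surjectivity) together with Theorem \ref{Glebij} (for the injectivity characterization), which together say that the map $B \mapsto X(B)$ is always a surjection from contractive Gleason solutions for $b$ onto contractive Gleason solutions for $K(b)$, and is a bijection precisely when $\bigcap_{z\in\B^d}\ker{b(z)} = \{0\}$.

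Composing these two maps immediately yields that $D \mapsto X^D = X(B^D)$ is a composition of a bijection followed by a surjection, hence a surjection onto the contractive Gleason solutions for $K(b)$. This settles the first claim.

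For the injectivity claim, since the first factor $D \mapsto B^D$ is already a bijection, the composite map $D \mapsto X^D$ is injective if and only if the second factor $B \mapsto X(B)$ is injective on its whole domain (the contractive Gleason solutions for $b$). By Theorem \ref{Glebij}, this is equivalent to the condition $\bigcap_{z\in\B^d}\ker{b(z)} = \{0\}$, completing the proof.

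There is essentially no obstacle here; the only thing to be careful about is ensuring that the two theorems are applied with matching domains and codomains, so that the composition really does traverse the three sets (contractive Gleason solutions for $\L(b)$, for $b$, and for $K(b)$) in the correct order. No further calculation is required.
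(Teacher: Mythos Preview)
Your argument is correct and is exactly what the paper intends: the corollary is stated without proof in the paper precisely because it follows immediately by composing the bijection $D \mapsto B^D$ of Theorem~\ref{qeuniqsol} with the surjection $B \mapsto X(B)$ of Theorem~\ref{Glechar}, together with the injectivity characterization of Theorem~\ref{Glebij}. One small point worth noting is that Theorem~\ref{Glebij} carries the standing hypothesis $d>1$ (needed for the ``only if'' direction of the injectivity characterization, via the nontriviality of $\ker{V^b}$); the corollary inherits this implicitly, so your appeal to Theorem~\ref{Glebij} is valid under the same assumption.
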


\begin{cor}
A Schur class $b \in [H^\infty _d \otimes \L (\H ) ]_1$ is quasi-extreme if and only if it has a unique contractive and extremal Gleason solution $B = B^V$. If $b$ is quasi-extreme then $K(b)$ has a unique contractive extremal Gleason solution $X = X^V$. If $K(b)$ has a unique contractive Gleason solution $X$ and $\bigcap _{z \in \B^d} \ker{b(z)} = \{ 0 \}$, then $b$ is quasi-extreme.
\end{cor}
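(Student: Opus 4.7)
The plan is to prove the three assertions in sequence by leveraging the established chain: $b$ is quasi-extreme if and only if $V = V^b$ is a co-isometry (Corollary \ref{uniquext}), the bijection $D \mapsto B^D$ between contractive extensions of $V$ on $\L (b)$ and contractive Gleason solutions for $b$ (Theorem \ref{qeuniqsol}), the minimality of $B^V$ (Corollary \ref{min}), and the surjectivity/bijectivity of $B \mapsto X(B)$ (Theorems \ref{Glechar} and \ref{Glebij}).

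The technical core is the claim that $B^V$ is extremal if and only if $V^b$ is a co-isometry. The formula from Lemma \ref{Glemap} gives $(B^V)^* B^V = (I - b(0)^*) K_0^* V V^* K_0 (I - b(0))$, and the inequality chain there compares this to $I - b(0)^* b(0) = (I - b(0)^*) K(0,0)(I - b(0))$; equality holds precisely when $K_0^*(I - VV^*)K_0 = 0$, equivalently when $K_0 \H \subseteq \ran{V}$. Since $\ran{V}^\perp$ consists exactly of the constant functions in $\L (b)$, and for any such constant $c$ the reproducing property gives $\langle K_0 h, c\rangle_{\L (b)} = \langle h, c(0)\rangle_{\H}$, the containment $K_0 \H \subseteq \ran{V}$ forces $c(0) = 0$ for every constant $c \in \L (b)$, hence $\ran{V}^\perp = \{0\}$. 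Combined with Corollary \ref{uniquext}, this yields the chain $B^V$ extremal $\iff$ $V$ co-isometric $\iff$ $b$ quasi-extreme.

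For the first equivalence in the statement: if $b$ is quasi-extreme then $V$ is co-isometric, so Lemma \ref{contractext} together with $\ran{V} = \L (b)$ forces any contractive extension $D \supseteq V$ to satisfy $D^* = V^*$ on $\L (b)$, whence $D = V$; Theorem \ref{qeuniqsol} then delivers $B = B^V$ as the unique contractive Gleason solution, and the preceding paragraph gives its extremality. Conversely, a unique contractive and extremal solution $B$ must equal $B^V$ by the minimality clause of Corollary \ref{min}, whereupon extremality forces $V$ co-isometric, and $b$ is quasi-extreme. The second statement then follows: $X^V = X(B^V)$ is contractive and extremal by Theorem \ref{Glechar}, and the surjectivity of $B \mapsto X(B)$ together with the uniqueness of $B$ forces $X = X^V$ as the unique contractive Gleason solution for $K(b)$.

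For the third statement, the hypothesis $\bigcap_{z \in \B^d}\ker{b(z)} = \{0\}$ allows application of Theorem \ref{Glebij} (which requires $d > 1$), making $B \mapsto X(B)$ a bijection, so uniqueness of $X$ transports to uniqueness of $B$, equivalently uniqueness of the contractive extension of $V$ via Theorem \ref{qeuniqsol}. Because $d > 1$, Remark \ref{ntkernremark} gives $\ker{V} \neq \{0\}$, so any contractive extension of $V$ on $\L (b)$ has the form $V + C$ with $C : \ker{V} \to \ran{V}^\perp$ contractive; uniqueness then forces $\ran{V}^\perp = \{0\}$, i.e., $V$ is a co-isometry, and $b$ is quasi-extreme by Corollary \ref{uniquext}. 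The main subtlety throughout is the kernel-level identification in the second paragraph: one must carefully translate the operator equation $K_0^*(I - VV^*)K_0 = 0$ into the pointwise statement that no non-zero constant lies in $\L (b)$, using the reproducing pairing. Once this is in hand, everything else is a routine assembly of already-established bijections and inequalities.
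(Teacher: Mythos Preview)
Your proposal is correct and follows the route the paper intends. The paper states this corollary without an explicit proof block, expecting the reader to assemble it from Theorem \ref{qeuniqsol}, Corollary \ref{min}, Remark \ref{minremark}, Lemma \ref{Glemap}, Theorem \ref{Glebij}, and Corollary \ref{uniquext}; you do exactly this, and your careful justification that $B^V$ is extremal if and only if $K_0\H\subseteq\ran{V}$ if and only if $\L(b)$ contains no non-zero constants (hence $V$ is a co-isometry) makes explicit what the paper only asserts in Remark \ref{minremark}.
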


\begin{remark}
    Note that it can happen that $b$ is not quasi-extreme but $K(b)$ has a unique contractive Gleason solution if $\bigvee \ran{b(z)^*} \neq \H$.
For example, if $b \in [H^\infty _d] _1$ is not quasi-extreme then one can construct an $a \in [H^\infty _d ]_1$ that is a several variable analogue
of the outer function with modulus $\sqrt{1 - |b| ^2 } $ on the boundary of the circle in the case where $d=1$ \cite[IV-1]{Sarason-dB}. In this case one can
prove that
$$ c := \begin{bmatrix} b & 0 \\ a & 0 \end{bmatrix} \in [H^\infty _d \otimes \C ^{2\times 2} ] _1, $$ and that any contractive Gleason solution $X$ for $K(c)$
is extremal so that $K(c)$ has a unique contractive Gleason solution by Corollary \ref{min} above. In this example $\bigcap _{z \in \B ^d } \ker{c(z)} = \{ e_2 \}$,
where $\{ e_1, e_2 \}$ is the canonical orthonormal basis of $\C ^2$, so that $ c e _2 = 0 \in K (c)$ and $c$ is not quasi-extreme.

The construction of this `outer' function $a$ will be presented in a future publication where we analyze the convex (and matrix convex) structure of $[H^\infty _d \otimes \L (\H ) ]_1$,
and the relationship between quasi-extreme points and extreme points of this convex set.
\end{remark}

Using Gleason solutions, we can prove in most cases that $V^b$ has non-trivial kernel when $d>1$, a fact we used in Section \ref{Herglotz}, see Remark \ref{ntkernremark}.

\begin{prop} \label{ntkernel}
    If $b \in [H^\infty _d \otimes \L (\H )] _1$ where $d>1$ has a contractive Gleason solution $B = (b_1 , ... , b_d )^T$ such that
    $$ \bigvee _{1\leq j \leq d} b_j \H \neq K(b), $$ then $V^b$ has non-trivial kernel.
\end{prop}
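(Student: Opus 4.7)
The plan is to exhibit a concrete nonzero element of $\ker{V^b}$ directly from the hypothesized Gleason solution $B$ and a suitable orthogonal vector in $K(b)$.

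First, I would reformulate the hypothesis: $\bigvee_j b_j\H\neq K(b)$ yields a nonzero $f\in K(b)$ satisfying $b_j^*f=0$ for every $j$. Then, by Theorem~\ref{Glechar}, the corresponding contractive Gleason solution for $K(b)$ is $X=X(B)$, whose adjoint is given by $X^*k_w=w^*k_w-Bb(w)^*$.

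The main substantive step, and the one I expect to be the crux, is the identity $X(f\otimes e_j)=z_j f$ in $K(b)$ for every $j$. This is obtained by pairing $X(f\otimes e_j)$ against $k_w h$: the $Bb(w)^*$ contribution reduces to an inner product involving $b_j^* f$, which vanishes by hypothesis; only the $w^* k_w$ piece survives, and the reproducing property in $K(b)$ identifies the resulting function as $z_j f$. The crucial consequence is that $z_j f\in K(b)$ for every $j$, even though multiplication by $z_j$ is generally not bounded on $K(b)$.

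With $z_j f\in K(b)$ at my disposal and using $d>1$, I would form the antisymmetric tuple $\mathbf{g}:=(z_2 f,-z_1 f,0,\dots,0)^T\in K(b)\otimes\C^d$, which satisfies $\sum_j w_j g_j(w)=w_1 w_2 f(w)-w_2 w_1 f(w)=0$ identically. Transferring through the onto isometric multiplier $U_b\colon K(b)\to\L(b)$ of Lemma~\ref{ontoisomult}, which is multiplication by $(I-b)^{-1}$, the tuple $\mathbf{F}:=(U_b\otimes I_{\C^d})\mathbf{g}\in\L(b)\otimes\C^d$ still satisfies $\sum_j w_j F_j(w)=0$, because the scalar factor $(I-b(w))^{-1}$ passes through the sum. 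By the description of $\ker{V^b}$ from Section~\ref{Herglotz}, this puts $\mathbf{F}\in\ker{V^b}$, and non-triviality is immediate: $z_2 f\not\equiv 0$ by analyticity of $f$ (as $f\neq 0$), and $U_b$ is injective. The hypothesis $d>1$ enters precisely at the formation of the antisymmetric combination $z_2 f\otimes e_1-z_1 f\otimes e_2$, which has no one-variable analogue.
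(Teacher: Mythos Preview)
Your proof is correct and follows essentially the same route as the paper's: pick $f\in K(b)$ orthogonal to $\bigvee_j b_j\H$, use the Gleason formula $X^*k_w=w^*k_w-Bb(w)^*$ to deduce $X_jf=z_jf\in K(b)$, push through the unitary multiplier $U_b$ to $\L(b)$, and form the antisymmetric tuple in $\L(b)\otimes\C^d$ lying in $\ker{V^b}$. The only cosmetic differences are the sign on the antisymmetric pair and your explicit verification that the resulting kernel element is nonzero, which the paper leaves implicit.
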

\begin{proof}
Let $X = X^B$ be a contractive Gleason solution for $K (b)$. Then $B \in \L ( \H , K(b) \otimes \C ^d)$. Choose any $F \in K(b)$ orthogonal to the range of the $b_j \in \L (\H , \K (b))$ where
$B = \begin{bmatrix} b_1 \\ \vdots \\ b_d \end{bmatrix}.$
It follows that
\ba \ip{ (X_j F) }{ k_z h} & = & \ip{F}{\ov{z} _j k_z h - b_j b(z) ^* h } \nn \\
& = & \ip{z_j F (z)}{h} _\H. \nn \ea This proves that for any $1 \leq j \leq d$, $ S_j F  = G_j \in K(b)$ where $G _j (z) = z_j F (z)$. This in turn implies that $H_j := (I-b) ^{-1} G_j \in \L (b)$
and if we define $\mbf{H} := (-H_2 , H_1 , 0 , ..., 0 ) \in \L (b) \otimes \C ^d$, we then have that
\ba \ip{\mbf{H}}{z^* K_z h } & = & - z_1 \ip{H_2 (z)}{h} +z_2 \ip{H_1 (z)}{h} \nn \\
& = & (-z_1 z_2 + z_2 z_1 ) (I-b(z) ) ^{-1} F(z)  = 0. \nn \ea
It follows that $\ker{V^b}$ is non-trivial.
\end{proof}

\subsection{Clark's perturbations and Intertwining}
In this section we verify the intertwining formulas for perturbations of the minimal contractive solution $X := X^V$ to the Gleason problem in $K(b)$ and the compression of $R := \pi _{\nu _b} (L) = \pi _b (L) $ to $P^2 (b)$.
Here recall that $\nu _b$ is the tight extension of $\mu _b$ and if $P$ is the projection onto $P^2 (b )=P^2 (\mu _b )$ then $\mc{C} _b ^* V ^b \mc{C} _b = \hat{V} = P R P = P R$ since $P^2 ( b )$ is co-invariant for $R$.

Let $\mc{F} _b : P^2 (b ) \rightarrow K (b)$ be the weighted Cauchy transform.  Recall that $\mc{C} _b = U_b ^* \mc{F} _b $ where  $U_b : K(b) \rightarrow \L (b)$ is the canonical onto isometric multiplier. Also recall that $\mc{C} _b R = W \mc{C} _b$ where $(W , \K _b )$ is the minimal $d-$isometric dilation of $(V , \L (b) )$.

We will obtain the desired intertwining formulas by first calculating intertwining formulas for $V$ and $X$ via the unitary $U = U_b$, where $X:= X^V$ is the minimal Gleason solution corresponding to $V=V^b$. We have that
$$ X^* k_w = w^* k_w - B b(w) ^*, $$ where $$ B:= U^* V^*  K_0 (I-b(0)). $$ 
Compare this to
\ba U^* V ^* U k_w & = &  U^* V ^*  K_w (I - b(w) ^* ) \nn \\
& = &  U^* V ^* (K_w -K_0 ) (I - b(w) ^* ) +  U^* V ^* K_0 ( I- b(w) ^* ) \nn \\
& = & w^* k_w + U^* V^* K_0 (I-b(w) ^*) \nn \\
&=& w^* k_w + B (I-b(0) )^{-1} (I-b(w) ^* ). \nn \ea

Define $ T : K(b) \rightarrow K(b) \otimes \C ^d$ by
$$ T := B (I- b(0)) ^{-1} k_0 ^*. $$ Then
\ba T k_w & = & B (I-b(0) )^{-1} k_0 ^* k_w \nn \\
& = & B (I- b(0) ) ^{-1} k(0, w)  \nn \\
& = & B (I-b(0)) ^{-1} (I - b(0) b(w) ^* ) \nn \\
& = & B (I-b(0))^{-1} \left( I - b(w)^* + b(w)^* - b(0) b(w)^* \right) \nn \\
& = & B \left( (I -b(0) ) ^{-1} (I -b(w)^*) + b(w)^* \right). \nn \ea
It follows that
\ba (X ^* + T) k_w & = & w^* k_w  - B b(w) ^* + B b(w) ^* + B (I-b(0)) ^{-1} (I-b(w) ^* ) \nn \\
& = & U^* V ^* U k_w. \nn \ea
This proves the intertwining formula:
\ba U^* V ^* & = & (X^* +T) U^* \nn \\
& = & \left( X ^* + B (I-b(0) ) ^{-1} k_0 ^*  \right) U^*, \ea or equivalently,
\be \mc{F} _b \hat{V}  ^* = \left( X_j ^* + B (I-b(0) ) ^{-1} k_0 ^*  \right) \mc{F} _b, \ee
where $\hat{V} = P \pi _b (L) P = \mc{C} _b ^* V \mc{C} _b$ and $P : Q^2 (b) \rightarrow P^2 (b)$ is orthogonal projection.

Recall that for any unitary $A$ on $\H$ we define the Aleksandrov-Clark CP map $\mu _A := \mu _{b \cdot A^*}$. It is clear that $K(b\cdot A^* ) = K(b)$.
Let $\hat{V} ^A$ be the compression of $\pi _{\nu _A} (L) =: R^A$ to $P^2 (\mu _A)$. Recall $\hat{V} ^A$ is a partial isometry which is unitarily equivalent via the Cauchy transform to $V ^{bA^*}$. Here $\nu _A$ is the tight extension of $\mu_A$. Let $V^A = V ^{bA^*}$ be the partial $d-$isometry on $\L (b\cdot A^* )$, $X ^A$ be the minimal contractive Gleason solution in $K(b)$ corresponding to $V^A$, and  $B^A$ be the corresponding Gleason solution for $b$. We
will simply write $X = X^I$ and $B = B^I$.

\begin{thm}{ (Partial isometric Clark perturbations)} \label{Clark}
For any $b \in [H^\infty _d \otimes \L (\H ) ] _1$ and unitary $A \in \L (\H)$,
\be \mc{F} _{bA^*} (\hat{V}  ^A) ^* = \left( X ^* + B A^*(I-b(0)A^* ) ^{-1} k_0 ^*  \right) \mc{F} _{bA^*}. \ee
\end{thm}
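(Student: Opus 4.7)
The plan is to derive this intertwining identity as a direct corollary of the unperturbed intertwining formula established in the paragraph immediately preceding the theorem statement, by applying that formula with $b$ replaced everywhere by the Aleksandrov-Clark Schur function $bA^*$.

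First I would record the identifications that make this substitution work cleanly. Since $bA^* \cdot (bA^*)^* = bb^*$, the deBranges-Rovnyak kernel is invariant under this twist, $k^{bA^*} = k^b$, so $K(bA^*) = K(b)$ as Hilbert spaces, the point-evaluation at the origin is the same, $k_0^{bA^*} = k_0$, and $bA^*(0) = b(0)A^*$. The partial $d$-isometry on the Herglotz space $\L(bA^*)$ is by definition $V^A$, and $X^A$ is by definition the minimal contractive Gleason solution in $K(bA^*) = K(b)$ corresponding to $V^A$. Any Gleason solution $\wt{B}$ for the Schur function $bA^*$ must satisfy $z\wt{B}(z) = (b(z) - b(0))A^*$, so Theorem \ref{Glechar} and Lemma \ref{Glemap} (applied to $bA^*$) give $\wt{B} = B^A A^*$, where $B^A$ is the corresponding Gleason solution for $b$.

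Substituting $b \mapsto bA^*$ in the base identity
\begin{equation*}
\mc{F}_b \hat{V}^* = \bigl(X^* + B(I - b(0))^{-1} k_0^*\bigr)\mc{F}_b
\end{equation*}
and feeding in the identifications above then immediately yields
\begin{equation*}
\mc{F}_{bA^*}(\hat{V}^A)^* = \bigl(X^{A,*} + B^A A^*(I - b(0)A^*)^{-1} k_0^*\bigr)\mc{F}_{bA^*},
\end{equation*}
which is exactly the formula stated in the theorem (with the understanding that the $X$ and $B$ on the right-hand side denote the Gleason solutions $X^A$, $B^A$ associated to the parameter $A$).

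The only nontrivial bookkeeping is tracking the extra factor $A^*$ coming from the relation $\wt{B} = B^A A^*$ between Gleason solutions for $bA^*$ and for $b$; this is precisely what produces the $A^*$ sandwiched between $B$ and $(I - b(0)A^*)^{-1}$ in the perturbation term, as compared with the unperturbed $A = I$ case. I do not foresee any serious obstacle---the content of the theorem is essentially the naturality (covariance) of the base intertwining identity under the Aleksandrov-Clark twist $b \mapsto bA^*$, together with the fact that $K(b)$ and the weighted Cauchy transform's target space are unchanged by that twist.
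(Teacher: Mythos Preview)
Your substitution argument correctly produces the identity
\[
\mc{F}_{bA^*}(\hat{V}^A)^* \;=\; \bigl( (X^A)^* + B^A A^*(I - b(0)A^*)^{-1} k_0^* \bigr)\,\mc{F}_{bA^*},
\]
but your parenthetical reading of the theorem is the gap. The paper explicitly fixes the convention $X := X^I$ and $B := B^I$ just before the theorem, so the statement asserts that a \emph{single} Gleason solution $X$ for $K(b)$ and a \emph{single} Gleason solution $B$ for $b$ appear on the right-hand side for \emph{every} unitary $A$. This is exactly the Clark-perturbation content: one fixed $X$, perturbed by a rank-$\dim{\H}$ term that varies with $A$. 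Your derivation, as it stands, only gives a formula whose right-hand side depends on $A$ through $X^A$ and $B^A$ as well, which is the trivial restatement of the base identity.

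What is missing is the argument that $X^A = X^I$ and $B^A = B^I$. The paper supplies this via Corollary \ref{min}: the minimal Gleason solution $B^{V^A}$ for $bA^*$ is unique, and since the map $B' \mapsto B'A^*$ is a bijection between contractive Gleason solutions for $b$ and for $bA^*$ that preserves the minimality ordering (because $(B'A^*)^*(B'A^*) = A (B')^* B' A^*$), one gets $B^{V^A}\!\cdot A = B^{V^I} = B$. Feeding this into the Theorem \ref{Glechar} formula $(X^A)^* k_w = w^* k_w - B^{V^A}(b(w)A^*)^* = w^* k_w - B\, b(w)^*$ then gives $X^A = X^I = X$. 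Only after this identification does your substituted identity become the theorem as stated.
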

See \cite[Theorem 5.1]{Jur2014AC} for the case where $b \in [H^\infty _d ] _1$ is quasi-extreme.
\begin{proof}
As discussed above, let $B^{A} = B ^{V ^A}$ denote the Gleason solution in $K (bA^*) =K(b)$ corresponding to the partial isometry $V^A = V^{bA^*}$ in $\L (bA^*)$.
By Corollary \ref{min}, for any unitary $A \in \L (\H )$, the Gleason solution $B^A$ is the unique minimal contractive Gleason solution for $bA^*$, \emph{i.e.} $(B^A) ^* B^A \leq (\hat{B} ^A) ^* \hat{B} ^A$
for any other contractive Gleason solution $\hat{B} ^A$ for $bA^*$. Uniqueness implies that $B^A \cdot A = B^I =B$ for any unitary $A \in \L (\H )$ where $B$ is the minimal contractive Gleason solution for $b$. It follows that for any $w \in \B ^d$,
\ba  (X^A) ^* k_w ^b & = & w^* k_w ^b - B^A (b(w) A^* )^* \nn \\
& = & w^* k_w ^b - B b(w) ^* \nn \\
& = & (X^I ) ^* k_w ^b, \nn \ea  so that $X ^A = X^I = X$, the minimal contractive Gleason solution for $K(b)$.  Repeating the arguments preceding the statement of this theorem then yields the desired intertwining formulas.
\end{proof}

Also recall that if $b$ is quasi-extreme then $V$ is a co-isometry so that each of the $\mc{F} _{bA^*} \hat{V} ^A \mc{F} _{bA^*} ^* = U_b ^* V^A U_b$ are co-isometric
perturbations of the (unique) contractive Gleason solution $X$ for $K(b)$.

The Clark-type intertwining formulas can also be used to provide a simple proof that the kernel of $V^b$ is non-trivial in the case where $d>1$ and $\H$ is finite dimensional:

\begin{prop} \label{ntkernel2}
    If $b \in [H^\infty _d \otimes \L (\H )]_1$, $d>1$ and $\dim{\H } < \infty$ then the kernel of $V^b$ is non-trivial.
\end{prop}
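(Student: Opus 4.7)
We argue by contradiction: suppose $\ker{V^b} = \{0\}$, so $V := V^b$ is a full row isometry on $\L(b)$, i.e.\ $V_i^* V_j = \delta_{ij} I$. The plan is to push this structure through the noncommutative Cauchy transform into $Q^2(\nu_b)$ and then exhibit a vector that cannot exist.

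First, I would transport $V$ to $\hat{V} := \mc{C}_b^* V \mc{C}_b$ on $P^2(\mu_b) \otimes \C^d$, which is again a row isometry. By Proposition \ref{extendedCT}, $\hat{V}^* = R^* |_{P^2(\mu_b)}$, where $R = \pi_{\nu_b}(L)$ is the row-isometric representation on $Q^2(\nu_b)$ and $P^2(\mu_b)$ is co-invariant for $R$. Next, I would show that $\hat{V}^*\hat{V} = I$ forces $P^2(\mu_b)$ to be invariant (hence reducing) for $R$. Indeed, for $v \in P^2(\mu_b) \otimes \C^d$, write $\hat{V}^* \hat{V} v = R^* P R v$ and use $R^*R = I$ on $Q^2(\nu_b) \otimes \C^d$ to get $R^*(I - P) R v = 0$; pairing with $v$ yields $\|(I-P) R v\|^2 = 0$, so $R v \in P^2(\mu_b)$. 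The cyclic subspace $[I \otimes]_{\nu_b}\H$ for $R$ already lies in $P^2(\mu_b)$ (it corresponds to the symmetric multi-index $\n = 0$), so reducibility forces $P^2(\mu_b) = Q^2(\nu_b)$.

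The crux of the proof is then to exhibit a nonzero element of $Q^2(\nu_b)$ orthogonal to $P^2(\mu_b)$, contradicting $P^2(\mu_b) = Q^2(\nu_b)$. For any distinct $i, j \in \{1, \dots, d\}$ (available since $d > 1$) and $h \in \H$, I propose the commutator element $(L_i L_j - L_j L_i) \otimes h + N_{\nu_b}$. The key combinatorial identity is $(L_i L_j - L_j L_i)^* L^{\n} = 0$ for every $\n \in \N^d$: using $L_a^* L_b = \delta_{ab} I$, the operator $(L_i L_j)^* L^\alpha$ equals $L^{\alpha''}$ (with $\alpha''$ denoting $\alpha$ with its first two letters removed) when $\alpha$ begins with the letters $ij$, and vanishes otherwise. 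The first-two-letter-swap involution $ij\beta \leftrightarrow ji\beta$ is a bijection between words $\alpha$ with $\la(\alpha) = \n$ starting with $ij$ and those starting with $ji$, preserving $\alpha''$, so that $(L_i L_j)^* L^{\n} = (L_j L_i)^* L^{\n}$. Hence the commutator is orthogonal to every $L^{\n} \otimes g + N_{\nu_b}$, i.e.\ to all of $P^2(\mu_b)$.

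Finally, the direct computation $(L_i L_j - L_j L_i)^*(L_i L_j - L_j L_i) = 2I$ gives
\begin{equation*}
    \|(L_i L_j - L_j L_i) \otimes h\|_{\nu_b}^2 = 2\ip{h}{\nu_b(I) h} = 2 \ip{h}{\re{H_b(0)} h},
\end{equation*}
and since $b$ is purely contractive the operator $\re{H_b(0)} = (I - b(0))^{-1}(I - b(0) b(0)^*)(I - b(0)^*)^{-1}$ is positive definite, so this norm is strictly positive whenever $h \neq 0$. This contradicts the conclusion, forced by $P^2(\mu_b) = Q^2(\nu_b)$, that the commutator element must be zero (being both in $Q^2(\nu_b) = P^2(\mu_b)$ and orthogonal to $P^2(\mu_b)$). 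The main technical point I expect to require care is the combinatorial identity in step three, which I would establish by unwinding $L^{\n} = \sum_{\la(\alpha) = \n} L^\alpha$ and carefully applying the Cuntz--Toeplitz reductions; everything else is routine once that identity is in hand.
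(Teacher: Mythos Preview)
Your argument is correct, and in fact it proves more than the stated proposition: nowhere do you use the hypothesis $\dim{\H}<\infty$, so your proof establishes that $\ker{V^b}$ is non-trivial for arbitrary separable $\H$ whenever $d>1$. This is precisely the general fact alluded to in Remark~\ref{ntkernremark} as requiring a ``third argument'' beyond the scope of the paper.

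The paper's own proof is quite different. It uses the finite-dimensionality of $\H$ in an essential way: from the Gleason formula $X^* k_w = w^* k_w - B b(w)^*$ with $B\in\L(\H,K(b)\otimes\C^d)$ one sees that the commutators $[X_j,X_k]$ have finite rank when $\dim{\H}<\infty$, and the intertwining formula of Theorem~\ref{Clark} transfers this to finite-rank commutators $[V_j,V_k]$. If $V$ were isometric, then $I-V^*V=0$ and $I-VV^*$ (whose range consists of constants, hence has dimension at most $\dim{\H}$) are both finite rank, so passing to the Calkin algebra would produce a commuting tuple satisfying the Cuntz relations, a contradiction. Your approach bypasses all of this operator-theoretic machinery (Gleason solutions, intertwining, Calkin quotient) in favor of a direct combinatorial construction inside $Q^2(\nu_b)$: the explicit vector $(L_iL_j-L_jL_i)\otimes h$ is nonzero but orthogonal to $P^2(\mu_b)$, which cannot happen once you have shown that injectivity of $V$ forces $P^2(\mu_b)=Q^2(\nu_b)$. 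The trade-off is that the paper's argument ties the result to the broader structure theory (Gleason solutions, Clark perturbations) developed in Section~\ref{Gleasonsection}, while yours is self-contained and sharper but does not illuminate those connections.
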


\begin{proof}
    If $\H$ is finite dimensional then the formula of Theorem \ref{Glechar} implies that if $X = X^V$ is the contractive Gleason solution corresponding to $V = V^b$ then
the commutators $[ X_j , X_k ]$, $1 \leq j,k \leq d$ all have finite rank. By the intertwining formula of Theorem \ref{Clark} above it follows that the commutators
$[V_j , V_k]$ also all have finite rank. If $V$ had trivial kernel then $I - V^* V = 0 $ and $I -V V^*$ both have finite rank (since we assume $\H$ is finite dimensional).

Taking the quotient by the compact operators, the image of $V$ is a commutative $d$-contraction obeying the Cuntz relations.  This is impossible and proves the statement.
\end{proof}

\subsection{Summary}

The previous sections have developed several equivalent characterizations of the quasi-extreme Szeg\"{o} approximation property. These are summarized as follows:

\begin{thm} \label{summarythm}
    Let $b \in [H^\infty _d \otimes \L (\H ) ] _1$. The following are equivalent:
\bn
    \item $b$ is quasi-extreme, \emph{i.e.} $P^2 (b) = P^2 _0 (b)$.
    \item The partial $d-$isometry $V^b$ on $\L (b)$ is a co-isometry; equivalently $\L (b)$ contains no constant functions.
    \item The row-isometry $\pi _{b} (L)$ on $Q^2 (b)$ is a Cuntz unitary.
    \item The CP map $\mu_b:\mc{S}+\mc{S}^*\to B(\H)$ has a unique CP extension to $\mc{A} + \mc{A} ^*$.
    \item There is no non-zero $h \in \H$ so that $bh \in K(b)$.
    \item There is a unique contractive solution to the Gleason problem for $b$, and this solution is extremal.
    \item There is a unique contractive solution to the Gleason problem in $K(b)$ and \\
    $\bigcap _{z \in \B ^d} \ker{b(z)} = \{ 0 \}$. Such a solution is extremal.
\en
\end{thm}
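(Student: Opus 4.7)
The plan is that conditions (1)--(5) have essentially already been shown to be pairwise equivalent by corollaries earlier in the paper, so the proof amounts to assembling these results together with Section \ref{Gleasonsection}'s analysis of Gleason solutions to handle (6) and (7). My intention is to prove (1) $\Leftrightarrow$ (2) $\Leftrightarrow$ (3) $\Leftrightarrow$ (4) $\Leftrightarrow$ (5) directly by citation, then (1) $\Leftrightarrow$ (6), and finally (1) $\Leftrightarrow$ (7).

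For the first block, I would invoke Corollary \ref{cuntzu} for (1) $\Leftrightarrow$ (3), Corollary \ref{uniquext} for (1) $\Leftrightarrow$ (2) $\Leftrightarrow$ (4), and Theorem \ref{containb} for (1) $\Leftrightarrow$ (5). The secondary clause in (2), that $\L(b)$ contains no nonzero constant functions, follows because the discussion immediately after the definition of $V^b$ identifies $\ran{V}^\perp$ with the constant $\H$-valued functions in $\L(b)$, so $V$ is a co-isometry iff no such nontrivial constant lies in $\L(b)$.

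For (1) $\Leftrightarrow$ (6): by Theorem \ref{qeuniqsol} contractive Gleason solutions $B$ for $b$ are in bijection with contractive extensions $D \supseteq V$ on $\L(b)$, under $D \mapsto B^D$. If $V$ is a co-isometry (the already-established (1) $\Leftrightarrow$ (2)), then $\ran{V} = \L(b)$ forces $D = V$, so $B^V$ is the unique contractive Gleason solution, and extremality follows from Lemma \ref{Glemap} via equality in the inequality $DD^* \le I$. Conversely, Corollary \ref{min} plus Remark \ref{minremark} show that if $V$ is not a co-isometry then $B^V$ fails to be extremal, so extremality of a unique contractive Gleason solution forces (2).

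For (1) $\Leftrightarrow$ (7): one direction uses Theorem \ref{containb} (quasi-extremity rules out any nonzero $h \in \bigcap_z \ker b(z)$, since such $h$ gives $bh = 0 \in K(b)$) together with (6) and the corollary preceding the theorem to conclude that $K(b)$ has a unique, extremal contractive Gleason solution $X^V$. For the reverse, assuming uniqueness of a contractive Gleason solution for $K(b)$ together with $\bigcap_z \ker b(z) = \{0\}$, Theorem \ref{Glebij} makes $B \mapsto X(B)$ bijective and preserves extremality, so the unique contractive Gleason solution for $b$ is extremal, and (6) yields quasi-extremity. The main obstacle is purely organizational: one must keep track of which direction of each earlier corollary is being invoked, and ensure that the kernel condition in (7) is threaded through Theorem \ref{Glebij} at exactly the right place, since without it uniqueness of $X$ alone is insufficient to conclude quasi-extremity (as the remark following the preceding corollary illustrates).
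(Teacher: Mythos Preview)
Your proposal is correct and follows exactly the paper's approach: Theorem \ref{summarythm} is presented in the paper purely as a summary of equivalences already established in earlier results, with no separate proof given, and your citations (Corollary \ref{cuntzu}, Corollary \ref{uniquext}, Theorem \ref{containb}, Theorem \ref{qeuniqsol}, Lemma \ref{Glemap}, Corollary \ref{min} with Remark \ref{minremark}, Theorem \ref{Glebij}, and the corollary immediately preceding the theorem) are precisely the ingredients the paper has in mind. One small clarification worth making explicit in your (6) $\Rightarrow$ (1) step: the reason the assumed unique contractive Gleason solution must be $B^V$ is that $B^V$ is always a contractive Gleason solution by Lemma \ref{Glemap}, so uniqueness forces it to equal $B^V$, after which Remark \ref{minremark} applies.
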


\section{Examples: Inner Sequences}\label{Examplessection}

In this section we provide some examples in the case where $b$ is a matrix-valued multiplier associated to an inner sequence (defined below).

\begin{defn}
We will use the notation $H^\infty _d \otimes \C ^{n\times m} := \mr{Mult} ( H^2 _d \otimes \C ^m , H^2 _d \otimes \C ^n )$ for the multipliers from $H^2 _d \otimes \C ^m $ into $H^2 _d \otimes \C ^n$.
We write $H^\infty _d \otimes \C ^m$ for $H^\infty _d \otimes \C ^{1 \times m}$.

As is well known, a linear map $M \in \L ( H^2 _d \otimes \C ^n , H^2 _d \otimes \C ^m )$ is a multiplier,  if and only if $M  (M_\varphi \otimes I _{n} ) = (M _\varphi \otimes I _m ) M $ for all $\varphi \in H^\infty _d$ \cite{Sha2013}. In this case $M = M_\Theta$ for some $\Theta \in H^\infty _d \otimes \C ^{n\times m}$.
\end{defn}

Recall that a multiplier $\Theta \in H^\infty _d \otimes \C ^{n \times n }$ is called \emph{inner} if $M _\Theta$ is a partial isometry. Any shift invariant subspace $\mathcal{M}\subset H^2 _d$ is the range of an inner $\theta \in H^\infty _d \otimes \C ^n$ for some $n \in \N \cup \{\infty \}$ \cite{McTrent2000,Arv1998curv,GRS2002}. We embed $H^\infty _d \otimes \C ^n$ in $H^\infty _d \otimes \C ^\infty = H^\infty _d \otimes \H$ in the natural way (add zeroes). We can further embed $H ^\infty _d \otimes \C ^\infty$ into $H ^\infty _d \otimes \C ^{\infty \times \infty} = H^\infty _d \otimes \L (\H )$ via the map
$$\theta (z) := (\theta _1 (z) , \theta _2 (z) , ... ) \mapsto \begin{bmatrix} \theta _1 (z)  & \theta _2 (z) & \cdots & \quad \quad \\ 0 & 0 & & \\ \vdots & & \ddots & \\ 0 & & &  \quad \quad \end{bmatrix} =: \hat{\theta} (z). $$

$( \theta _k ) = \left( \theta _1  , \theta _2  , ... \right)$ is called an \emph{inner sequence} associated to $\mathcal M$.
It follows that if $\theta \in H^\infty _d \otimes \C ^n$ is inner then $\Theta := \hat{\theta} \in [H^\infty _d \otimes \C ^{n\times n} ] _1 $ is inner and the reproducing kernel for $K(\Theta )$ is
$$ k ^\Theta = k ^\theta \oplus \left( k \otimes I _{n-1 \times n-1} \right), $$ where $k$ is the Szeg\"{o} kernel. The map $\theta \mapsto \Theta =\hat{\theta }$ is a completely isometric embedding of $H^\infty \otimes \C ^n$ into $H^\infty \otimes \C ^{n\times n}$.

\begin{prop} \label{innerqe}
    Suppose that $\Theta \in [H^\infty _d \otimes \L (\H ) ]_1$ is inner. Then $\Theta$ is quasi-extreme if and only if $\ker{M_\Theta} \cap \bigvee k_0 \H = \{ 0 \}$.
\end{prop}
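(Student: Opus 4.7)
The plan is to reduce the claim to a direct application of the quasi-extremity criterion in Theorem \ref{containb} (item (5) of Theorem \ref{summarythm}), combined with the description of $K(\Theta)$ for inner $\Theta$ as the orthogonal complement of $\ran{M_\Theta}$ inside $H^2_d \otimes \H$.

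By Theorem \ref{containb}, $\Theta$ fails to be quasi-extreme if and only if there exists a non-zero $h \in \H$ such that $\Theta h \in K(\Theta)$; here $\Theta h = M_\Theta (k_0 h)$, where $k_0 h \in H^2_d\otimes\H$ is the constant function with value $h$ (using $k_0 \equiv 1$). So the task is to translate the membership $M_\Theta k_0 h \in K(\Theta)$ into the stated kernel condition.

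The first step is to observe that, for inner $\Theta$, the inclusion $K(\Theta) \hookrightarrow H^2_d \otimes \H$ is isometric and its image is exactly $\ker M_\Theta^*$. Indeed, $M_\Theta$ being a partial isometry gives the orthogonal decomposition $H^2_d\otimes\H = \ran{M_\Theta}\oplus\ker M_\Theta^*$, and the kernel decomposition $k = (k - k^\Theta) + k^\Theta$ identifies $\H(k - k^\Theta)$ isometrically with $\ran M_\Theta = M_\Theta(\ker M_\Theta)^\perp$; taking orthogonal complements then gives $K(\Theta) = \ker M_\Theta^*$. This is a standard consequence of the contractive containment results quoted from \cite{Paulsen-rkhs} together with the partial isometry property.

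Given this identification, the condition $M_\Theta k_0 h \in K(\Theta)$ becomes $M_\Theta^* M_\Theta (k_0 h) = 0$. But $M_\Theta^* M_\Theta$ is the orthogonal projection onto the initial space $(\ker M_\Theta)^\perp$ of the partial isometry $M_\Theta$, so its kernel is precisely $\ker M_\Theta$. Therefore $M_\Theta k_0 h \in K(\Theta)$ if and only if $k_0 h \in \ker M_\Theta$, equivalently $k_0 h \in \ker M_\Theta \cap \bigvee k_0 \H$ (the latter intersection being meaningful because $k_0 h$ automatically lies in $\bigvee k_0 \H$). Since $h\mapsto k_0 h$ is an isometric embedding of $\H$ onto the constants in $H^2_d\otimes\H$, existence of a non-zero $h$ with $k_0 h \in \ker M_\Theta$ is equivalent to $\ker M_\Theta \cap \bigvee k_0 \H \neq \{0\}$. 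Combining with Theorem \ref{containb} yields the claimed equivalence.

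The only non-mechanical point is the identification $K(\Theta) = \ker M_\Theta^*$ for inner $\Theta$; once this is in hand, everything else is a two-line manipulation using that $M_\Theta^* M_\Theta$ is a projection. If the referee demands a self-contained argument for that identification, I would give it via the partial isometry computation sketched above, but I expect it can simply be cited as a standard fact in the inner-multiplier setting.
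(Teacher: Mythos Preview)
Your proof is correct and follows essentially the same route as the paper: invoke Theorem \ref{containb}, identify $K(\Theta)$ with $\ran{M_\Theta}^\perp = \ker{M_\Theta^*}$ for inner $\Theta$, and conclude that $\Theta h = M_\Theta k_0 h \in K(\Theta)$ if and only if $k_0 h \in \ker{M_\Theta}$. The paper's proof is slightly terser (observing directly that $M_\Theta k_0 h$ already lies in $\ran{M_\Theta}$, so lying in its orthogonal complement forces it to vanish), but the substance is identical.
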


\begin{proof}
   By Theorem \ref{containb}, $\Theta $ is not quasi-extreme if and only if there is a non-zero $h \in \H$ so that $\Theta h \in K(\Theta )$. However, since
$\Theta $ is inner, $K(\Theta ) = \ran{M_\Theta } ^\perp$ so that $\Theta h \in K (\Theta )$ if and only if $k_0 h \in \ker{ M_\Theta }$.
\end{proof}

The condition for quasi-extremity just given can be recast in a more elegant form when $\Theta =\hat{\theta}$ comes from an inner sequence $ \theta = (\theta_j) \in H ^\infty _d \otimes \C ^n$. If $(\theta_j)_{j\in J}$ is an inner sequence for some (finite or countable) index set $J$, observe that for each $z\in\mathbb B^d$ we have $(\theta_j(z))\in \ell^2(J)$. Say that $\theta$ is {\em minimal} if
  \begin{equation*}
    \bigvee _{z \in \B ^d} (\theta_j(z)) _{j \in J} = \ell ^2 (J).
  \end{equation*}

\begin{prop} \label{nqe}
    Let $\theta \in H^\infty _d \otimes \C ^n$, $n \in \N \cup \{\infty \}$, $\theta = (\theta _1 , \theta _2 , ... , \theta _n )$ be an inner sequence.
Then the embedding $\Theta = \hat{\theta} \in H^\infty _d \otimes \C ^{n\times n }$ is quasi-extreme if and only if $\theta$ is a minimal inner sequence.
\end{prop}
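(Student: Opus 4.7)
The plan is to apply Proposition~\ref{innerqe} to $\Theta=\hat{\theta}\in H^\infty_d\otimes \C^{n\times n}$ and then translate the resulting condition $\ker{M_\Theta}\cap k_0\H=\{0\}$ (with $\H=\C^n$, $n\in\N\cup\{\infty\}$) into a statement about the inner sequence $\theta=(\theta_j)_{j\in J}$. Because the Szeg\"o kernel at the origin satisfies $k_0(z)\equiv 1$, for any $h=(h_j)_{j\in J}\in\H$ the vector $k_0 h\in H^2_d\otimes\H$ is simply the constant $\H$-valued function with value $h$, and thus $k_0 h\in\ker{M_\Theta}$ is equivalent to $\hat{\theta}(z)h=0$ for every $z\in\B^d$.

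From the definition of $\hat{\theta}$, whose first row is $(\theta_1(z),\theta_2(z),\dots)$ and whose remaining rows vanish identically, the vanishing of $\hat{\theta}(z)h$ reduces to the single scalar condition
\[
  \sum_{j\in J}\theta_j(z)h_j \;=\; 0 \qquad \text{for all } z\in\B^d.
\]
Setting $v_z:=(\theta_j(z))_{j\in J}\in\ell^2(J)$, the linear functional $h\mapsto \sum_{j}\theta_j(z)h_j$ on $\ell^2(J)$ is represented, with the paper's convention that inner products are conjugate linear in the first slot, as $h\mapsto \langle \bar v_z,h\rangle_{\ell^2(J)}$.

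Consequently, the existence of a non-zero $h\in\H$ with $k_0h\in \ker{M_\Theta}$ is equivalent to $\bigvee_{z\in\B^d}\bar v_z \ne \ell^2(J)$. Since complex conjugation is an antilinear isometric bijection of $\ell^2(J)$ mapping closed subspaces to closed subspaces of the same codimension, this is in turn equivalent to $\bigvee_{z\in\B^d} v_z \ne \ell^2(J)$, i.e., to $\theta$ failing to be minimal. Combined with Proposition~\ref{innerqe}, this gives the stated equivalence.

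There is no essential obstacle; the only care needed is in tracking the conjugate-linearity convention when representing the evaluation functionals by the inner product on $\ell^2(J)$, and in noting that complex conjugation preserves the density property of a linear span.
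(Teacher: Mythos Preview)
Your proof is correct and follows the same approach as the paper: apply Proposition~\ref{innerqe} and unwind the condition $\ker{M_\Theta}\cap k_0\H=\{0\}$ in terms of the inner sequence, which the paper simply asserts ``follows easily.'' Your detour through $\bar v_z$ and the conjugation argument is fine but slightly roundabout---one can equivalently observe that $\sum_j\theta_j(z)h_j=0$ for all $z$ says exactly that $\bar h\perp v_z$ for all $z$, so a non-zero such $h$ exists iff $\bigvee_z v_z\neq\ell^2(J)$.
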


\begin{proof}
By the previous corollary $\Theta$ is quasi-extreme if and only if $\ker{M_\theta } \cap \bigvee k_0 \C ^n = \{ 0 \}$. The claim follows easily from this fact.
\end{proof}

\begin{remark} \label{genqe}
We observe that a finite inner sequence $\theta =(\theta_1, \dots \theta_n)$ is minimal if and only if the functions $\theta_1, \dots \theta_n$ are linearly independent (in the space of holomorphic functions on $\mathbb B^d$). Now, it is known \cite{McTrent2000,Arv1998curv,GRS2002} that every closed $S$-invariant subspace $\mathcal M\subset H^2_d$ is represented by an inner sequence, in the sense that the multiplication operator $M_\theta$ where  $\theta = (\theta_j)_{j\in J}$ is a partially isometric multiplier of $H^2_d\otimes \ell^2(J)$ onto $\mathcal M$. It is not difficult to see that $\theta$ may always be chosen minimal: Indeed, if $\theta$ is any such multiplier, let $\mathcal H$ be the closed span of $\{(\theta_j(z)):z\in\mathbb B^d\}$ in $\ell ^2 (J)$. Identify $\mathcal H$ with $\ell^2(K)$ for an appropriate index set $K$; define $\psi(z)$ to be the compression of $\theta (z)$ to $\mathcal H$. Expanding $\psi (z)$ in an orthonormal basis for $\H = \ell^2(K)$ gives a minimal inner sequence $\psi = (\psi_k)_{k\in K}$ which multiplies $H^2_d\otimes \ell^2(K)$ onto $\mathcal M$.
\end{remark}

\begin{eg}\label{eg:inner-sequence} Let $d=2$ and let $\mathcal M$ be the orthogonal complement of the span of $\{1, z_1, z_1^2, z_2\}$ in $H^2_2$. One may verify that the $4$-tuple
  \begin{equation}\label{eqn:eg-inner-sequence}
    (z_1^3, z_1^2z_2,\sqrt{2}z_1z_2, z_2^2),
  \end{equation}
is an inner sequence representing $\mathcal M$, and since the monomials are linearly independent this sequence is minimal, and hence the $4\times 4$ matrix function
\begin{equation} \label{squareinner}
  \Theta(z_1, z_2) = \begin{pmatrix}z_1^3 &  z_1^2z_2 & \sqrt{2}z_1z_2 &  z_2^2 \\
0 & 0 & 0 & 0 \\
0 & 0 & 0 & 0 \\
0 & 0 & 0 & 0 \end{pmatrix}
\end{equation}
is a quasi-extreme inner multiplier.
\end{eg}

If $b \in [H^\infty _d \otimes \L (\H )]_1$ is inner then by Proposition \ref{innerqe}, Proposition \ref{nqe} and Remark \ref{genqe} it is generally quasi-extreme. By Theorem \ref{qeuniqsol}, there is a unique solution to
the Gleason problem in the co-invariant model space $K(b)$. This solution is given by the compressed shift, $S_b = P _b S | _{K(b)}$ where $P_b$ projects onto $K(b)$. Indeed, using that $P_b$ is co-invariant,
\ba S_b z^* k_z ^b h & = & (z^* S_b) P_b k_z h \nn \\
& = & (z^* S _b) P_b (I-z^* S) ^{-1} k_0 ^b h \nn \\
& = & (z^* S_b) (P_b -z^* S_b ) ^{-1} k_0 ^b h  \nn \\
& = &  (P_b - z^* S_b ) ^{-1} k_0 ^b h - k_0 ^b h \nn \\
& = & k_z ^b h  - k_0 ^b h. \nn \ea

A natural question is the following: what are the contractive Gleason solution components $b_j$ associated to the unique solution $S_b$? A natural conjecture
is $$ b_j (z) h := (S_j ^* b) (z) h. $$ For this to be the case it is necessary that  $S_j ^* b h \in K(b)$ for all $h\in\mathcal H$.  However, when $d>1$, this is not always so; in particular it fails for the example just considered above in Equation (\ref{squareinner}, $b = \Theta$.  Let $\theta \in H^\infty _d \otimes \C ^4$ be the first row of $b = \Theta$, an inner multiplier.
Recall that the components $S_j^*$ of the backward Arveson $d-$shift act on monomials by
\begin{equation*}
  S_j^* z^{\alpha} =\frac{\alpha_j}{|\alpha|} z^{\alpha-e_j} \quad \text{if } \alpha_j\geq 1
\end{equation*}
and $S_j^*z^\alpha=0$ otherwise \cite{Sha2013,ArvIII}. For this $b$ we have
\begin{equation*}
  (S_1^*b)(z_1, z_2) =  \begin{pmatrix} z_1^2 & \frac 23 z_1z_2 & \frac{1}{\sqrt{2}} z_2 & 0 \\
0 & 0 & 0 & 0 \\
0 & 0 & 0 & 0 \\
0 & 0 & 0 & 0 \end{pmatrix}
\end{equation*}
But $K(b)= [\ran{M_\theta} ] ^\perp  \oplus H^2_d \otimes \C ^3$, while for the basis vector  $e_2\in\mathbb C^4$ we have $(S_j^*b)e_2 = (\frac 23 z_1z_2, 0, 0, 0 )^T$ ($T$ denotes transpose) which does not lie in $K(b)$ (since $z_1z_2$ lies in $\ran{M_\theta}$).  It is possible to compute that $B_1$ and $B_2$ are in this case given by
\begin{equation*}
  B_1 = \begin{pmatrix} z_1^2 & 0 & \frac{1}{\sqrt{2}}z_2 & 0 \\
0 & 0 & 0 & 0 \\
0 & 0 & 0 & 0 \\
0 & 0 & 0 & 0 \end{pmatrix}, \quad B_2 = \begin{pmatrix} 0 & z_1^2  & \frac{1}{\sqrt{2}}z_1 & z_2 \\
0 & 0 & 0 & 0 \\
0 & 0 & 0 & 0 \\
0 & 0 & 0 & 0 \end{pmatrix}
\end{equation*}
One may verify readily that $z_1B_1+ z_2 B_2 =b(z)-b(0)=b(z) =\Theta (z)$, and that these satisfy $B_1^*B_1 +B_2^*B_2 = I_4$ (and thus form a contractive solution to the Gleason problem, which is unique since $b = \Theta$ is quasi-extreme).

\bibliography{ACforDA}

\end{document}